
 
\documentclass[11pt]{amsart}
\usepackage{geometry}                
\geometry{letterpaper}                   
\usepackage{graphicx}
\usepackage{amssymb}
\usepackage{epstopdf}
\usepackage{amsmath}
\usepackage{color}
\usepackage{hyperref}
 \usepackage{tikz}
 
\usepackage[all]{xy}
\xyoption{matrix}
\xyoption{arrow}
 
 \usetikzlibrary{arrows,decorations.pathmorphing,backgrounds,positioning,fit,petri}

 \tikzset{help lines/.style={step=#1cm,very thin, color=gray},
help lines/.default=.5} 
\tikzset{thick grid/.style={step=#1cm,thick, color=gray},
thick grid/.default=1} 

\textwidth = 6 in
\textheight = 8.5 in
\oddsidemargin = 0.25 in
\evensidemargin = 0.25 in
\topmargin = 0.25 in
\headheight = 0.0 in
\headsep = 0.0 in
\parskip = 0.0in
\parindent = 0.2in

\pagestyle{plain}
 
\newtheorem{thm}{Theorem}[section]
\newtheorem{lem}[thm]{Lemma}
\newtheorem{cor}[thm]{Corollary}
\newtheorem{prop}[thm]{Proposition}

\theoremstyle{definition}
\newtheorem{defn}[thm]{Definition}
\newtheorem{eg}[thm]{Example}

\newtheorem{rem}[thm]{Remark}
 
\numberwithin{equation}{section}

\DeclareGraphicsRule{.tif}{png}{.png}{`convert #1 `dirname #1`/`basename #1 .tif`.png}



\newcommand\into{\hookrightarrow}

\newcommand\onto{\twoheadrightarrow}


\newcommand\smallcoprod{\,{\textstyle{\coprod}}\,}
\newcommand\smallprod{\,{\textstyle{\prod}}\,}

\DeclareMathOperator{\Hom}{Hom}%
\DeclareMathOperator{\Ext}{Ext}%
%
%
%
%
%
%
%
\DeclareMathOperator{\add}{add} 

\newcommand{\field}[1]{\mathbb{#1}}
\newcommand{\ZZ}{\ensuremath{{\field{Z}}}}

\newcommand{\RR}{\ensuremath{{\field{R}}}}

\newcommand{\NN}{\ensuremath{{\field{N}}}}

\newcommand{\arUD}[1]{$\nearrow$\put(.02,.15){$#1$}\put(.2,0){$\searrow$}}
\newcommand{\arDU}[1]{$\searrow$\put(.02,-.15){$#1$}\put(.2,0){$\nearrow$}}

\newcommand{\commentout}[1]{}

\newcommand{\cC}{\ensuremath{{\mathcal{C}}}}

\newcommand{\cF}{\ensuremath{{\mathcal{F}}}}

\newcommand{\cJ}{\ensuremath{{\mathcal{J}}}}

\newcommand{\cP}{\ensuremath{{\mathcal{P}}}}

\newcommand{\cS}{\ensuremath{{\mathcal{S}}}}

\newcommand{\cT}{\ensuremath{{\mathcal{T}}}}

\newcommand{\cluster}[3]{
\draw[very thick, rotate=#1] (#2,0) circle [radius=#3cm]}
\newcommand{\dotcluster}[3]{
\draw[very thick,dotted, rotate=#1] (#2,0) circle [radius=#3cm]}

\newcommand{\lcluster}[3]{
\draw[rotate=#1] (#2,0) circle [radius=#3cm]}
\newcommand{\dcluster}[3]{
\draw[thick, dashed,rotate=#1] (#2,0) circle [radius=#3cm]}
 \newcommand{\bcluster}[3]{
\draw[color=blue,very thick, rotate=#1] (#2,0) circle [radius=#3cm]}
\newcommand{\rcluster}[3]{
\draw[color=red,very thick, rotate=#1] (#2,0) circle [radius=#3cm]}
\newcommand{\ncluster}[3]{
\draw[very thick, rotate=#1] (-#2,0) circle [radius=#3cm]}
\newcommand{\lncluster}[3]{
\draw[rotate=#1] (-#2,0) circle [radius=#3cm]}
\newcommand{\bncluster}[3]{
\draw[color=blue,very thick, rotate=#1] (-#2,0) circle [radius=#3cm]}
\newcommand{\rncluster}[3]{
\draw[color=red,very thick, rotate=#1] (-#2,0) circle [radius=#3cm]}

%
%

\title{Cyclic posets and triangulation clusters}
\author{Kiyoshi Igusa}
\address{Department of Mathematics, Brandeis University, Waltham, MA 02454}\email{igusa@brandeis.edu}

\author{Gordana Todorov}
\address{Department of Mathematics, Northeastern University, Boston, MA 02115}
\email{g.todorov@northeastern.edu}

\subjclass[2000]{
18E30:16G20}

\keywords{cactus cyclic poset, cluster structure, triangulated categories, Frobenius categories, cocycle}

 
\begin{document}

\begin{abstract} Triangulated categories coming from cyclic posets were originally introduced in \cite{IT15b} as a generalization of the constructions of various triangulated categories with cluster structures. We give an overview, then analyze ``triangulation clusters'' which are those corresponding to topological triangulations of the 2-disk.
Locally finite nontriangulation clusters give topological triangulations of the ``cactus space'' associated to the ``cactus cyclic poset''.
\end{abstract}

\maketitle

\tableofcontents

\section{Introduction}
We first point out that in our papers cyclic posets are not posets. However, to each cyclic poset $X$ there is an associated ``covering" poset $\widetilde X$. When $(X,c)$ is nondegenerate, the covering poset $\widetilde X$ is also a $\mathbb Z$-poset in the sense of \cite{ZZK} and \cite{Sh}. Then the elements of $X$ are in bijection with the cyclic $\mathbb Z$-subposets of $\widetilde X$. For the precise statements, 
see Section \ref{ss: cyclic S-posets} and Theorems \ref{thm: cyclic poset is set of cyclic Z-posets}, \ref{prop: converse to cyclic poset is set of cyclic Z-posets}. When we introduced the notion of cyclic poset in \cite{IT15b} we were not aware of the existing notion of cyclic $\mathcal S$-poset. These are different notions, as stated above. There is another related notion called a ``partial cyclical order'' \cite{M} which appears as a special case of a cyclic poset. See Section \ref{ss: PCO}. We thank the referee for making us aware of these other notions. 

This paper has one section giving an overview about cyclic posets which is an expanded version of the lectures given at Tsinghua University and Chern Institute at Nankai University.  The rest of the paper is material which was not published before.

Using representations of cyclic posets we give  a general approach to the construction of {\color{black}several cluster categories such as the} continuous cluster categories \cite{IT15a}, cluster categories of types $A_n$ (\cite{CCS}, \cite{BMRRT}), {\color{black}  $A_{\infty}$ \cite{HJ12}, $m$-cluster categories of type $A_{\infty}$ \cite{HJ15} and many more, for example \cite{LP}, \cite{GHJ}}. These categories were also applied in the recent study of infinite Borel subalgebras of $\mathfrak{sl}(\infty)$ \cite{JY}.

The main results of this paper are about the cluster structures on the triangulated categories arising from  the subsets of circle $S^1$ together with several types of automorphisms. 
When the automorphism is identity and the subset is the entire $S^1$, we obtain continuous cluster category, which was already describe in several papers \cite{IT15a} and\cite{IT15b}. In this case we only make a quick summary and some remarks about differences with classical cluster categories. With the same identity automorphsim, one obtains `spaced-out' cluster categories, whose clusters are in bijection with the triangulations of the $n$-gon, however they are different from the classical cluster categories of type $A_{n-3}$ whose clusters are also in bijections with the triangulations of the $n$-gon. For example, in the space-out cluster category every object is isomorphic to its shift, i.e. $X\cong X[1]$, whereas the cluster category of type $A_{n-3}$ is 2-Calabi-Yau. The spaced-out cluster category of the $n$-gon embeds as a full subcategory of the standard cluster category of $A_{2n-3}$ and the embedding takes clusters to clusters.

We consider discrete subsets of $S^1$ and the canonical automorphism. Many new triangulated categories with cluster structrues are obtained this way. {\color{black}We are particularly interested in the clusters corresponding to topological triangulations of subsets of the disk $D^2$. We show (Lemma \ref{1-1}) that these correspond to the locally finite clusters.} Special attention is paid to  {\color{black} a particularly nice collection of locally finite} clusters which define geometric triangulations of the disc $D^2$ after the limit points of the subset $Z$ are removed. We call these `triangulation clusters'. We give a complete characterization of triangulation clusters and we also consider the more difficult question of the topology of locally finite clusters which are not triangulation clusters. To handle these cases, we construct a new cluster category called the `cactus category' using  a new kind of cyclic posets called the `cactus cyclic posets'. We show that the nontriangulation clusters are in bijection with triangulation clusters for the components of the `cactus category' which is isomorphic to a product of cluster categories corresponding to `smaller disks'. Thus the nontriangulation clusters also are in bijection with geometric triangulations of a `cactus space'.

In the case when $\varphi$ is canonical and the cyclic poset has only finitely many limit points, we also characterize the shape of any locally finite cluster in the Auslander-Reiten quiver. In the case when there are two limit points the cluster starts with any zig-zag in the $A_\infty^\infty$ component and pushes down portions to the two $A_\infty$ components.

Finally we consider triangulated categories given by arbitrary subsets $X$ of the circle which are invariant under rotation by an angle $\theta$. We determine exactly when the resulting triangulated category $\cC_{\varphi_\theta}(X)$ has a cluster structure and, also, what all possible cluster structures are on these sets. When $X$ is infinite, the exchange graph of $\cC_{\varphi_\theta}(X)$ is infinite but each component is finite. So, only finitely many clusters are reachable from any seed. Also, clusters are almost never maximal compatible sets. So, the clusters are not defined in this way.

The original motivation for this paper was to present a few unpublished examples of cluster categories coming from cyclic posets. However, the paper now has several new results as outlined above. We also show the existence of triangulation clusters in cases where there are infinitely many limit points to the cyclic poset (and even infinitely many limits of limit points) showing that there are nontrivial examples in these extreme cases. It would be interesting to examine quantized versions of these infinite cluster structures.

The authors would like to thank Professors Bin Zhu, Fang Li, Zhongzhu Lin for their hospitality at Tsinghua University and at the Chern Institute of Mathematics during the Workshop on Cluster Algebras and Related Topics, July 10-13, 2017.
A series of lectures on this topic was given by the authors, which motivated the beginning of this paper and subsequent work for the rest of the paper.

\section{Representations of Cyclic Posets} \label{sec 1}

In this section we recall the definition and basic properties of cyclic posets from the paper \cite{IT15b}. 
As stated at the beginning, 
the notion of cyclic poset that is used in our papers, is different from the notion of cyclic $\mathcal S$-poset of \cite{ZZK} or \cite{Sh}  
(see Section \ref{ss: cyclic S-posets}). The notion of cyclic poset is also different, but related to the notion of \emph{partial cyclic order} \cite{M}. (See Section \ref{ss: PCO} for precise statements.)

\subsection{Cyclic Posets $(X,c)$} We first recall the definitions which will be used in the paper.

\begin{defn}\label{def: cocycle} A \emph{3-cocycle} on a set $X$ is defined to be a mapping $c: X^3\to \mathbb N$ with \emph{coboundary} $\delta c:X^4\to \ZZ$ equal to zero where
\[
	\delta c(w,x,y,z):=c(x,y,z)-c(w,y,z)+c(w,x,z)-c(w,x,y).
\]
The cocycle is called \emph{reduced} if $c(x,x,y)=0$ and $c(x,y,y)=0$ for all $x,y\in X$. The cocycle $c$ is \emph{degenerate} if $c(x,y,x)=0$ for some $x\neq y\in X$. If no such pair exists, we say $c$ is \emph{nondegenerate}.
\end{defn}

\begin{defn}\label{def: cyclic poset}
A \emph{cyclic poset} is a pair $(X,c)$ where $c$ is a reduced 3-cocycle on $X$. The cyclic poset is \emph{degenerate} or \emph{nondegenerate} if $c$ is \emph{degenerate} or \emph{nondegenerate}, respectively.
\end{defn}

\begin{rem} \label{subset} Let $(X,c)$ be a cyclic poset. Let $X'\subset X$ be a subset and $c'=c|_{X'}$. Then $(X',c')$ is a cyclic poset. This will be used in 
{\color{black} Sections \ref{subsets}, \ref{phi=canonical} }where cluster structures on cyclic posets $(X,c)$ with admissible subsets $X\subset S^1$ will be described.
\end{rem}
The main property of cyclic posets, which is used extensively in the original paper \cite{IT15b} and here as well, is the fact that, although cyclic posets are not posets, they correspond to actual posets which we call ``covering posets'', which we now describe. Here and in the original paper \cite{IT15b} a \emph{poset} is defined to be a set with a reflexive, transitive relation $x\le y$ and a \emph{poset morphism} is one which preserves this relation. The notation $x<y$ means $x\le y$ and $y\not\le x$.

\begin{defn}\label{cover} (Definition 1.1.1.\cite{IT15b})
A  \emph{covering poset} of a set $X$ is defined to be a poset $\widetilde X$ and a set map $\pi:\widetilde X\to X$ which satisfy the following conditions:
\begin{enumerate}
\item $\forall x\in X$ there is a poset isomorphism $\pi^{-1}(x)\cong \ZZ$.
\item The map $\sigma\!:\!\widetilde X\to\widetilde X$ given by $\sigma(\tilde x)\!:=\!
(\text{smallest element in }\pi^{-1}\pi(\tilde x)$ so that $\sigma(\tilde x)\!>\!\tilde x$) is a poset isomorphism of $\widetilde X$ which we call the \emph{defining automorphism}.
\item $\forall  \tilde x,\tilde y\in \widetilde X$ there exists $m,n\in \mathbb N$ such that $\tilde x\le \sigma^m\tilde y\le\sigma^{m+n}\tilde x$.
\end{enumerate}
Define 
$\mathcal P\!_{cover}(X)$ to be the set of all covering posets $\pi:\widetilde X\to X$. 
\end{defn}

\begin{rem}
On each fiber $\pi^{-1}(x)$ of $\pi:\widetilde X\to X$, the defining automorphism $\sigma$ corresponds to addition of 1 in $\ZZ\cong \pi^{-1}(x)$, i.e., $\sigma(\tilde x)=\lambda^{-1}(1+\lambda(\tilde x))$ for any choice of poset isomorphisms $\lambda: \pi^{-1}(x)\cong\ZZ$.
\end{rem}

\begin{prop}[Corollary 1.1.11, \cite{IT15a}]\label{equivalence: cyclic poset=covering poset} Let $X$ be a set. Then there exists a bijection between the following sets $\cP_{cycl}(X)\cong \cP_{cover}(X)$ where:
$$\mathcal P\!_{cycl}(X)=\{\text{cyclic posets } (X,c) \},$$ 
$$\mathcal P\!_{cover}(X)=\{\pi:\widetilde X\to X\ |\ \text{covering posets}\text{ satisfying (1), (2), (3)} \text{ from Definition } \ref{cover}\}.$$
Furthermore, a cyclic poset $(X,c)$ is nondegenerate if and only if the corresponding covering poset $\widetilde X$ has antisymmetric partial ordering.
\end{prop}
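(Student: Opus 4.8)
The plan is to realize the bijection by constructing two mutually inverse maps $\Psi\colon\cP_{cycl}(X)\to\cP_{cover}(X)$ and $\Phi\colon\cP_{cover}(X)\to\cP_{cycl}(X)$, and then to settle the nondegeneracy statement by reading antisymmetry of $\widetilde X$ directly off the cocycle. Throughout, recall that ``poset'' here means a preorder (reflexive and transitive), so that antisymmetry is a genuinely extra condition, and this is exactly the condition to be matched against nondegeneracy.

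\textbf{Forward map $\Psi$.} Given a cyclic poset $(X,c)$, I would set $\widetilde X=X\times\ZZ$ with $\pi(x,n)=x$ and defining automorphism $\sigma(x,n)=(x,n+1)$, so that condition (1) is immediate and $\pi^{-1}(x)\cong\ZZ$ as a poset. The order is built from $c$: one declares $(x,i)\le(y,j)$ precisely when the cocycle records that, after the shift by $j-i$, a lift of $y$ sits at or above the chosen lift of $x$, using the values $c(x,y,z)$ as the interleaving data between the fibers of $x$ and $y$. \emph{Reflexivity} then follows from the reduced identities $c(x,x,y)=c(x,y,y)=0$ together with the normalization that makes $\sigma$ shift each fiber by exactly one. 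The crucial point is \emph{transitivity}: the cocycle condition $\delta c=0$, i.e.\ $c(x,y,z)+c(w,x,z)=c(w,y,z)+c(w,x,y)$, is exactly the additivity needed to guarantee that $(w,\cdot)\le(x,\cdot)$ and $(x,\cdot)\le(y,\cdot)$ force $(w,\cdot)\le(y,\cdot)$. Finally, $c$ taking values in $\NN$ (non-negativity) together with $\delta c=0$ yields the Archimedean condition (3): the relevant counts are finite and nonnegative, so for any $\tilde x,\tilde y$ some shift $\sigma^m\tilde y$ overtakes $\tilde x$ and a further shift $\sigma^{m+n}\tilde x$ overtakes $\sigma^m\tilde y$.

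\textbf{Backward map $\Phi$ and inverses.} Given a covering poset $\pi\colon\widetilde X\to X$ with automorphism $\sigma$, I would define
\[
  c(x,y,z)=\#\{\,k\in\ZZ : \tilde x\le\sigma^k\tilde y\le\tilde z\,\},
\]
where $\tilde z$ is the lift of $z$ in the fundamental domain determined by the chosen lift $\tilde x$ of $x$. Condition (3) makes this count finite, and simultaneously shifting $\tilde x,\tilde z$ by $\sigma$ leaves it unchanged, which is what renders $c$ independent of the choice of lifts. Non-negativity is automatic; the reduced identities follow from the normalization of the fundamental domain together with $\pi^{-1}(\cdot)\cong\ZZ$; and $\delta c=0$ is just additivity of this counting function over a concatenation of the intervals cut out by $w,x,y,z$. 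For $\Phi\Psi=\mathrm{id}$ one checks that counting lifts in $X\times\ZZ$ recovers the original $c(x,y,z)$; for $\Psi\Phi=\mathrm{id}$ one produces the tautological map $(x,n)\mapsto\sigma^n\tilde x$ and verifies it is an isomorphism of covering posets, compatible with $\pi$ and $\sigma$ and both order-preserving and order-reflecting.

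\textbf{Nondegeneracy versus antisymmetry, and the main obstacle.} Within a single fiber the order is a copy of $\ZZ$ and hence antisymmetric, so any failure of antisymmetry is a pair of distinct lifts $\tilde a\le\tilde b\le\tilde a$ of \emph{different} points $x\neq y$. Under $\Phi$ such an order-equivalence is detected exactly by $c(x,y,x)=0$, and conversely $c(x,y,x)=0$ squeezes a lift of $y$ into order-equivalence with a lift of $x$; hence $\widetilde X$ is antisymmetric iff $c(x,y,x)\neq0$ for all $x\neq y$, which is nondegeneracy. The genuine content of the whole proposition is the equivalence ``$\delta c=0\Leftrightarrow$ transitivity'': showing that the cocycle condition is not only necessary but sufficient to glue the pairwise fiber-interleavings into one transitive relation, and dually that the counting function is honestly a cocycle. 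I expect this to be the main obstacle; once it is in place, reflexivity, condition (3), well-definedness of the count, the two inverse identities, and the nondegeneracy statement are all bookkeeping of the kind sketched above.
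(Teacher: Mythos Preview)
Your overall architecture matches the paper's: build mutually inverse maps between $\cP_{cycl}(X)$ and $\cP_{cover}(X)$, then read off nondegeneracy from $c(x,y,x)$. The nondegeneracy paragraph is essentially the paper's argument: if $c(x,y,x)=0$ for some $x\neq y$, one finds lifts $\lambda(x)\le\sigma^n\lambda(y)\le\lambda(x)$ witnessing a failure of antisymmetry, and conversely.

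Where you diverge is in the backward map $\Phi$. The paper does \emph{not} define $c$ as a count of lifts in an interval; instead it chooses a section $\lambda:X\to\widetilde X$, sets
\[
  b(x,y):=\min\{\,n\in\ZZ : \lambda(x)\le\sigma^n\lambda(y)\,\},
\]
and takes $c(x,y,z):=b(x,y)+b(y,z)-b(x,z)$, i.e.\ $c=\delta b$. This has two advantages over your counting formula. First, $\delta c=(\delta)^2 b=0$ is automatic, so the cocycle condition comes for free rather than being ``additivity of the count over concatenated intervals''. Second, your formula $\#\{k:\tilde x\le\sigma^k\tilde y\le\tilde z\}$ is problematic: the covering poset is only a preorder, not a total order, so ``the lift of $z$ in the fundamental domain determined by $\tilde x$'' is not well-defined in general. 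Even if you take $\tilde z=\sigma^{b(x,z)}\lambda(z)$, a direct computation shows the count equals $\max(0,\,1-c(x,y,z))$, not $c(x,y,z)$. So the specific formula you propose does not recover the cocycle.

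For the forward map $\Psi$, your sketch is in the right direction (take $\widetilde X=X\times\ZZ$), but you never say concretely which relation you put on it; ``the cocycle records that a lift of $y$ sits at or above'' is not a definition. The paper offloads this to \cite{IT15b}; the clean way is to first extract a $b$ from $c$ (any function with $\delta b=c$, e.g.\ $b(x,y)=c(x_0,x,y)$ for a fixed basepoint $x_0$, using reducedness) and then declare $(x,i)\le(y,j)$ iff $j-i\ge b(x,y)$. With that definition, transitivity is exactly $b(x,y)+b(y,z)\ge b(x,z)$, i.e.\ $c\ge 0$, and the whole construction becomes transparent. I would recommend rewriting both directions in terms of $b$ rather than counts.
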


\begin{proof} The bijection $\mathcal P\!_{cover}(X) \to \mathcal P\!_{cycl}(X)$ is given as follows: 
let $\pi:\widetilde X\to X$ be a covering poset. Let $\lambda: X\to \widetilde X$ be a section.
For each pair $x,y\in X$, define $b(x,y):=(\text{the smallest } n\in \mathbb Z \ |\ \lambda(x)\le\sigma^n\lambda(y))$.
Then it is shown in Lemma 1.1.8 in [IT15b] that $c:X^3\to \mathbb Z$ defined as $c(x,y,z):=b(x,y)+b(y,z)-b(x,z)$  is a reduced cocycle and Theorem 1.1.10 in \cite{IT15b} shows that the correspondence given by $(\pi:\widetilde X\to X)\mapsto (X,c)$ is a bijection.

To prove the last statement, suppose that $(X,c)$ is degenerate. Then there exist $x\neq y$ in $X$ with $c(x,y,x)=b(x,y)+b(y,x)=0$. Let $n=b(x,y)$. Then $-n=b(y,x)$. Then $\lambda(x)\le \sigma^n\lambda(y)\le \lambda(x)$ but $\lambda(x)\neq \sigma^n\lambda(y)$. So, these two elements of $\widetilde X$ show that the relation in $\widetilde X$ is not antisymmetric. The converse holds by a similar calculation. 
\end{proof}

\begin{eg}\label{cocycle10}Let $X=S^1$, $\widetilde X=\mathbb R$. Let $c:X^3\to N$ be given by:\\
$c(x,y,z)= 1$ if $\angle (x,y)+\angle (y,z) \geq 2\pi$, i.e. going counterclockwise around the circle the ordered elements $x,y,z$ complete the circle,\\
$c(x,y,z)= 0$ if $\angle (x,y)+\angle (y,z) < 2\pi$, i.e. going counterclockwise around the circle the ordered elements $x,y,z$ do not completes the circle.
\end{eg}

\subsection{Partial cyclic order}\label{ss: PCO} 
We consider the relation between cyclic posets (Definition \ref{def: cyclic poset}) and partial cyclic orders (Definition \ref{def: PCO}). We show that certain cyclic posets on a set $X$ define partial cyclic orders on the set $X$, however not all partial cyclic orders on $X$ can be obtained in this way as we show in Remark \ref{rem: eg: not all PCOs}. Hence, the two notions are different in general, but agree in some cases.

\begin{defn}\label{def: PCO}
A \emph{partial cyclic order} on a set $X$ is defined to be a set $\Delta$ of ordered triples of distinct elements of $X$ having the following properties.
\begin{enumerate}
\item[(a)] If $(x,y,z)\in \Delta$ then $(y,z,x), (z,x,y)\in \Delta$.
\item[(b)] If $(x,y,z)\in \Delta$ then $(x,z,y)\notin\Delta$.
\item[(c)] If $(x,y,z),(x,z,w)\in \Delta$ then $(x,y,w)\in \Delta$.
\end{enumerate}
A partial cyclic order is \emph{complete} if it satisfies the additional property that, for any three distinct elements $x,y,z\in X$, either $(x,y,z)\in \Delta$ or $(x,z,y)\in\Delta$.
\end{defn}

\begin{prop}\label{prop: PCO as case of cyclic poset}
Let $(X,c)$ be a cyclic poset satisfying the following for some $r\ge2$.
\begin{enumerate}
\item $c(x,y,z)\le r$ for all $x,y,z\in X$.
\item $c(x,y,x)=r$ when $x\neq y$.
\end{enumerate}
Then we have the following for all $x,y,z$ distinct elements of $X$.
\begin{enumerate}
\item[(i)] $c(x,y,z)+c(x,z,y)=r$.
\item[(ii)] $c(x,y,z),c(x,z,y)<r$ imply $c(x,y,z),c(x,z,y)\gneq 0$.
\end{enumerate}
Let $\Delta$ be the set of all triples $(x,y,z)$ of distinct element of $X$ so that $c(x,y,z)=r$. Then $\Delta$ is a partial cyclic order on $X$.
\end{prop}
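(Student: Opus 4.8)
The plan is to derive everything directly from the cocycle identity $\delta c=0$ together with the two hypotheses, without invoking the covering poset. The first step I would take is to record a cyclic symmetry of $c$ on distinct triples. Evaluating $\delta c(z,x,y,z)=0$ gives
\[
c(x,y,z)-c(z,y,z)+c(z,x,z)-c(z,x,y)=0,
\]
and since $c(z,y,z)=c(z,x,z)=r$ by hypothesis (2), the two middle terms cancel and I obtain $c(x,y,z)=c(z,x,y)$. Applying this again to the triple $(z,x,y)$ shows $c$ is invariant under cyclic permutation of its three distinct arguments, i.e. $c(x,y,z)=c(z,x,y)=c(y,z,x)$.

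Next I would prove (i). Evaluating $\delta c(x,y,z,y)=0$ yields
\[
c(y,z,y)-c(x,z,y)+c(x,y,y)-c(x,y,z)=0.
\]
Here $c(y,z,y)=r$ by (2) and $c(x,y,y)=0$ because $c$ is reduced, so this collapses to $c(x,y,z)+c(x,z,y)=r$, which is (i). Statement (ii) is then immediate: since $c$ takes values in $\NN$, if $c(x,y,z)=0$ then (i) forces $c(x,z,y)=r$, contradicting $c(x,z,y)<r$; by symmetry both values are $\gneq 0$ whenever both are strictly less than $r$.

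It then remains to check that $\Delta$ satisfies (a), (b), (c). Condition (a) is exactly the cyclic symmetry from the first step: $c(x,y,z)=r$ forces $c(z,x,y)=c(y,z,x)=r$. Condition (b) follows from (i): if $c(x,y,z)=r$ then $c(x,z,y)=r-r=0\neq r$ since $r\ge 2$. For (c), given $(x,y,z),(x,z,w)\in\Delta$, I would first note $y\neq w$, since otherwise $(x,z,w)=(x,z,y)$ and both $(x,y,z),(x,z,y)\in\Delta$ would violate (b); hence $(x,y,w)$ is a triple of distinct elements. Evaluating $\delta c(x,y,z,w)=0$,
\[
c(y,z,w)-c(x,z,w)+c(x,y,w)-c(x,y,z)=0,
\]
and using $c(x,z,w)=c(x,y,z)=r$ this rearranges to $c(x,y,w)=2r-c(y,z,w)$. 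The upper bound (1) gives $c(x,y,w)\le r$, forcing $c(y,z,w)\ge r$, while (1) also gives $c(y,z,w)\le r$; hence $c(y,z,w)=r$ and therefore $c(x,y,w)=r$, i.e. $(x,y,w)\in\Delta$.

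The one genuinely delicate step is (c): it is the interaction of the cocycle relation with the ceiling $c\le r$ from hypothesis (1) that pins down $c(x,y,w)$, whereas everything else is bookkeeping with $\delta c$ together with the normalizations from (2) and reducedness. The point requiring care throughout is to verify at each substitution that the arguments feeding (2) and the reducedness relations are in the required equal-or-distinct configuration, which is precisely where the distinctness of the elements in the $\Delta$-triples is used.
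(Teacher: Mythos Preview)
Your proof is correct and follows essentially the same approach as the paper: both derive everything from a handful of $\delta c$ evaluations combined with hypothesis (2) and reducedness, and for (c) both use $\delta c(x,y,z,w)=0$ together with the bound (1). Your argument is in fact slightly more careful than the paper's in that you explicitly check $y\neq w$ before concluding $(x,y,w)\in\Delta$.
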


\begin{proof}
We first verify (i) and (ii). Then we will verify the properties in Definition \ref{def: PCO}.
\begin{enumerate}
\item[(i)] For distinct elements $x,y,z\in X$ we have $\delta c(x,z,y,z)=c(z,y,z)-c(x,y,z)+c(x,z,z)-c(x,z,y)=0$. Since $c(z,y,z)=r$ and $c(x,z,z)=0$ we conclude that $c(x,y,z)+c(x,z,y)=r$ are claimed. 
\item[(ii)] This follows immediately from (i).
\item[(a)] $c(x,y,z)=c(y,z,x)$ for all $x,y,z$ distinct since $\delta c(x,y,z,x)=c(y,z,x)-c(x,z,x)+c(x,y,x)-c(x,y,z)=0$ and $c(x,z,x)=c(x,y,x)=r$. Thus $(x,y,z)\in \Delta$ if and only if $(y,z,x)\in \Delta$.
\item[(b)] If $(x,y,z)\in\Delta$ then $c(x,z,y)=0$ by $(i)$. Therefore $(x,z,y)$ is not in $\Delta$. 
\item[(c)] If $(x,y,z),(x,z,w)\in \Delta$ then $c(x,y,w)+c(y,z,w)=c(x,y,z)+c(x,z,w)=2r$. So, we must have $c(x,y,w)=c(y,z,w)=r$. So, $(x,y,w)\in\Delta$.
\end{enumerate}
Thus, $\Delta=\{(x,y,z)\,|\, c(x,y,z)=r\}$ is a partial cyclic order on $X$.
\end{proof}

\begin{rem}\label{rem: eg: not all PCOs} We observe that there are partial cyclic orders which cannot be defined as triples with constant cocycle value $r$ for a bounded cocycles as in Proposition \ref{prop: PCO as case of cyclic poset}. For example, let $X=\NN=\{0,1,2,\cdots\}$ with $\Delta$ given by:
\[
	\Delta:=\{(x,y,z)\in\NN^3\,|\, (x-y)(y-z)(z-x)>0\}
\]
Suppose there is a reduced cocycle $c$ on $X=\NN$ satisfying the conditions of Proposition \ref{prop: PCO as case of cyclic poset} so that $c(x,y,z)=r$ if and only if $(x,y,z)\in\Delta$. Then we will obtain a contradiction. For any $n\ge2$, $(1,n,n+1)\in\Delta$. So $c(1,n,n+1)=r$. Using the fact that $c(x,y,z)+c(x,z,y)=r$ for all $x,y,z$ distinct we conclude that $c(1,n+1,n)=0$. Since $c(1,n+1,n)$ is the first term in the expansion of $\delta c(0,1,n+1,n)$, we get
\[
	\delta c(0,1,n+1,n)=-c(0,n+1,n)+c(0,1,n)-c(0,1,n+1)=0.
\]
But $c(0,n+1,n)>0$ by \ref{prop: PCO as case of cyclic poset}(ii) since neither $(0,n+1,n)$ nor $(0,n,n+1)$ lies in $\Delta$. So,
\[
	c(0,1,n)>c(0,1,n+1)
\]
for all $n\ge2$. So, $c(0,1,n)$ is a monotonically decreasing function of $n$. Since $c(0,1,2)<r$, after $r$ steps we obtain $c(0,1,r+2)\le c(0,1,2)-r<0$, contradicting the fact that $c(x,y,z)\ge0$ for all $x,y,z$.
\end{rem}

\subsection{Cyclic $\mathbb Z$-posets}\label{ss: cyclic S-posets}
We discuss the relation between cyclic posets and cyclic $\mathbb Z$-posets which are a special case of cyclic $\mathcal S$-posets, in the sense of \cite{ZZK}, \cite{Sh} as we recall now.

\begin{defn}\label{relation} A partially ordered monoid, also called a \emph{pomonoid}, is a monoid $\mathcal S$ with a reflexive, antisymmetric and transitive relation $x\le y$, so that $a\le b$ and $x\le y$ implies $ax\le by$. An \emph{$\mathcal S$-poset} is defined to be a set with reflexive, antisymmetric and transitive relation with an action of $\mathcal S$ which is compatible with the partial ordering relations on both sets. Such an $\mathcal S$-poset  is called a \emph{cyclic $\mathcal S$-poset} if it is generated by one element, i.e., if it is equal to $\mathcal S a$ for some $a$.
\end{defn}

When the pomonoid $\mathcal S$ is the ordered additive group $\mathbb Z$, then, in many cases, the covering poset $\widetilde X$ from Definition \ref{cover} is an example of a $\mathbb Z$-poset and the elements of the set $X$ are in bijection with the cyclic $\mathbb Z$-subposets of $\widetilde X$. The precise statement is given below.

\begin{thm}\label{thm: cyclic poset is set of cyclic Z-posets}
Let $(X,c)$ be a nondegenerate cyclic poset with covering poset $\pi:\widetilde X\to X$. 

\begin{enumerate}
\item[(a)] Then $\widetilde X$ is a $\mathbb Z$-poset with the action of $\mathbb Z$ given by $n\cdot \tilde x:=\sigma^n\tilde x$ for $n\in \mathbb Z$. 
\item[(b)] The mapping which sends $x\in X$ to $\pi^{-1}(x)\subseteq \widetilde X$ is a bijection between $X$ and the set of cyclic $\mathbb Z$-subposets of $\widetilde X$.
\end{enumerate}
\end{thm}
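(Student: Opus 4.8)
The plan is to handle (a) and (b) separately, leaning throughout on the last sentence of Proposition \ref{equivalence: cyclic poset=covering poset}: since $(X,c)$ is nondegenerate, the partial order on $\widetilde X$ is antisymmetric, so $\widetilde X$ is a genuine poset and there is no obstruction to equipping it with a $\mathbb Z$-poset structure. I would also record at the outset two elementary facts about the defining automorphism $\sigma$ that will do all the work. First, $\sigma$ is order-preserving, being a poset isomorphism by Definition \ref{cover}(2), and hence so is every $\sigma^n$. Second, $\tilde x \le \sigma\tilde x$ for every $\tilde x$, which is immediate from the defining property $\sigma(\tilde x) > \tilde x$.

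For part (a), the rule $n\cdot\tilde x := \sigma^n\tilde x$ is a genuine $\mathbb Z$-action because $\sigma$ is a bijection with $\sigma^{m+n} = \sigma^m\sigma^n$ and $\sigma^0 = \mathrm{id}$. The only substantive point is compatibility of the action with the two partial orders: if $a\le b$ in $\mathbb Z$ and $\tilde x\le\tilde y$ in $\widetilde X$, I must show $\sigma^a\tilde x\le\sigma^b\tilde y$. Writing $b = a+k$ with $k\ge 0$ and iterating $\tilde y\le\sigma\tilde y$ (applying the order-preserving $\sigma$ repeatedly) gives $\tilde y\le\sigma^k\tilde y$; combined with $\tilde x\le\tilde y$ and transitivity this yields $\tilde x\le\sigma^k\tilde y$, and applying the order-preserving map $\sigma^a$ finishes the claim. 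Together with the trivial observation that $\mathbb Z$ with its usual order and addition is a pomonoid in the sense of Definition \ref{relation}, this shows $\widetilde X$ is a $\mathbb Z$-poset.

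For part (b), the key observation is that the cyclic $\mathbb Z$-subposets are exactly the $\sigma$-orbits, and these orbits are precisely the fibers of $\pi$. First, each fiber $\pi^{-1}(x)$ is $\sigma$-invariant because $\pi(\sigma\tilde x) = \pi(\tilde x)$, as $\sigma(\tilde x)$ lies in $\pi^{-1}\pi(\tilde x)$ by construction; and by the remark following Definition \ref{cover}, $\sigma$ restricts on $\pi^{-1}(x)$ to ``$+1$'' under the identification $\pi^{-1}(x)\cong\mathbb Z$. Hence $\pi^{-1}(x) = \{\sigma^n\tilde x_0 : n\in\mathbb Z\} = \mathbb Z\cdot\tilde x_0$ for any $\tilde x_0\in\pi^{-1}(x)$, so the fiber, with its inherited order and action, is a cyclic $\mathbb Z$-subposet. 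Conversely, any cyclic $\mathbb Z$-subposet has the form $\mathbb Z\cdot a = \{\sigma^n a : n \in \mathbb Z\}$ for some $a$; since all these elements lie in the single fiber $\pi^{-1}(\pi(a))$ and, by the previous sentence, already exhaust it, we get $\mathbb Z\cdot a = \pi^{-1}(\pi(a))$. This shows $x\mapsto\pi^{-1}(x)$ surjects onto the cyclic $\mathbb Z$-subposets. Injectivity is immediate: the fibers are nonempty and pairwise disjoint, so $\pi^{-1}(x) = \pi^{-1}(x')$ forces $x = x'$.

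I expect the only real friction to be bookkeeping rather than conceptual. The one place worth stating carefully is the identification in part (b) that a cyclic $\mathbb Z$-subposet can neither be a proper subset of, nor span more than, a single fiber; this rests entirely on the remark that $\sigma$ acts as translation by $1$ on each fiber, making every fiber a single $\sigma$-orbit. Checking that the inherited structure on $\pi^{-1}(x)$ genuinely satisfies the axioms in Definition \ref{relation} is then routine once part (a) is in hand, since any $\sigma$-invariant subset of a $\mathbb Z$-poset inherits a $\mathbb Z$-poset structure by restriction.
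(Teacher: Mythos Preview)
Your proof is correct and follows essentially the same approach as the paper: for (a) you verify order-compatibility of the $\sigma$-action in both variables and invoke antisymmetry from Proposition~\ref{equivalence: cyclic poset=covering poset}, and for (b) you identify cyclic $\mathbb Z$-subposets with $\sigma$-orbits, which are exactly the fibers of $\pi$. The paper's argument is terser but makes the same moves.
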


\begin{proof}
(a) Since $\sigma$ is a poset automorphism of $\widetilde X$ and $\tilde x<\sigma \tilde x$ for all $\tilde x\in \widetilde X$, we get $\sigma^n\tilde x<\sigma^m\tilde x$ whenever $n<m$. So, the action of $\mathbb Z$ on $\widetilde X$ given by $n\cdot \tilde x:=\sigma^n \tilde x$ is order preserving in both variables $n\in \mathbb Z$ and $\tilde x\in\widetilde X$. By Theorem \ref{equivalence: cyclic poset=covering poset}, the relation on $\widetilde X$ is antisymmetric. Therefore, $\widetilde X$ is a $\mathbb Z$-poset. 

(b) By Definition \ref{cover}(1), each inverse image $\pi^{-1}(x)$ is a single $\mathbb Z$-orbit in $\widetilde X$ which is the same as a cyclic $\mathbb Z$-subposet of $\widetilde X$. Conversely, every cyclic $\mathbb Z$-subposet of $\widetilde X$ is, by definition, of the form $\mathbb Z\cdot \tilde x=\sigma^{\mathbb Z}\tilde x=\pi^{-1}\pi(\tilde x)$ which is one fiber of the map $\pi:\widetilde X\to X$. Therefore, $\pi$ maps each cyclic $\mathbb Z$-subposet to an element of $X$ and $\pi^{-1}$ is the inverse of that map. So, we have a bijection as claimed.
\end{proof}

We now describe which $\mathbb Z$-posets occur as covering posets of nondegenerate cyclic posets.

\begin{prop}\label{prop: converse to cyclic poset is set of cyclic Z-posets}
Let $P$ be a $\mathbb Z$-poset. Then $P$ is isomorphic as $\mathbb Z$-poset to the covering poset $\widetilde X$ of some nondegenerate cyclic poset $(X,c)$ if any only if it has the following property:

$(\ast)$ For any $a,b\in P$ there is an $n\in\mathbb Z$ so that $a<n\cdot b$ and $a\not<(n-1)\cdot b$.
\end{prop}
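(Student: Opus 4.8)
The plan is to prove both implications, reducing everything to Proposition \ref{equivalence: cyclic poset=covering poset} so that I never have to build the cocycle $c$ by hand. Throughout I write $n\cdot x$ for the action of $n\in\mathbb Z$ on $x\in P$, and I recall two facts that I will use constantly: since $\mathbb Z$ is a group acting compatibly with the order, each $n\cdot(-)$ is a poset automorphism, and $n\le m$ in $\mathbb Z$ forces $n\cdot x\le m\cdot x$ for every $x\in P$; moreover $P$ is antisymmetric by the definition of a $\mathbb Z$-poset.

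For the forward direction, suppose $P\cong\widetilde X$ is the covering poset of a nondegenerate cyclic poset $(X,c)$, so $P$ satisfies conditions (1)--(3) of Definition \ref{cover}. Fix $a,b\in P$ and consider $S:=\{n\in\mathbb Z\mid a<n\cdot b\}$. First I would show that $S$ is a nonempty up-set bounded below. Applying condition (3) to the pair $(a,b)$ gives $m\in\mathbb N$ with $a\le m\cdot b<(m+1)\cdot b$, so $S\neq\varnothing$; applying condition (3) to $(b,a)$ gives $p\in\mathbb N$ with $b\le p\cdot a$, hence $(-p)\cdot b\le a$, and therefore $n\cdot b\le a$ for every $n\le -p$, which by antisymmetry forces $a\not<n\cdot b$ for such $n$, so $S$ is bounded below. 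A nonempty, bounded-below up-set in $\mathbb Z$ has a least element $n_0$, and then $a<n_0\cdot b$ while $a\not<(n_0-1)\cdot b$, which is exactly $(\ast)$.

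The converse is the substantive direction. Given $(\ast)$, I would set $X:=P/\mathbb Z$ (the set of orbits) with quotient map $\pi\colon P\to X$, and check that $\pi$ is a covering poset satisfying (1)--(3); since $P$ is antisymmetric, Proposition \ref{equivalence: cyclic poset=covering poset} then produces a nondegenerate cyclic poset $(X,c)$ with $\widetilde X=P$, the actions matching because $\sigma=1\cdot(-)$ by construction. The heart of the matter is condition (1), that each fiber (equivalently each orbit) is poset-isomorphic to $\mathbb Z$, and I expect this to be the main obstacle. The key observation is that any coincidence $k\cdot x=x$ with $k\ge 1$ is impossible: the chain $x\le 1\cdot x\le\cdots\le k\cdot x=x$ would collapse by antisymmetry to $n\cdot x=x$ for all $n$, contradicting $(\ast)$ applied to $a=b=x$, which furnishes some $n$ with $x<n\cdot x$. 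Hence the action is free and, since $n<m$ gives $n\cdot x\le m\cdot x$ with equality now excluded, strictly monotone; this makes the orbit map $\mathbb Z\to\pi^{-1}(x)$ an order isomorphism, yielding (1). Condition (2) is then immediate, since $\sigma=1\cdot(-)$ is an automorphism and $1\cdot x$ is the successor of $x$ within its orbit.

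Finally, condition (3) is essentially a direct transcription of $(\ast)$. For $\tilde x,\tilde y\in P$, I would apply $(\ast)$ to $(\tilde x,\tilde y)$ to obtain $m\in\mathbb N$ with $\tilde x\le m\cdot\tilde y$ (after replacing the given index by its maximum with $0$ and invoking monotonicity), and apply $(\ast)$ to $(\tilde y,\tilde x)$ to obtain $n\in\mathbb N$ with $\tilde y\le n\cdot\tilde x$; applying the automorphism $m\cdot(-)$ to the latter gives $m\cdot\tilde y\le(m+n)\cdot\tilde x$, so that $\tilde x\le m\cdot\tilde y\le(m+n)\cdot\tilde x$, as required. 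With (1)--(3) in place and $P$ antisymmetric, Proposition \ref{equivalence: cyclic poset=covering poset} completes the argument. The only points needing care are the bookkeeping ensuring the indices can be taken in $\mathbb N$ rather than merely in $\mathbb Z$, and the repeated but routine use of antisymmetry to upgrade weak inequalities to the strict orbit order.
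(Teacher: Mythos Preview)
Your proof is correct and follows essentially the same approach as the paper's: set $X=P/\mathbb Z$, verify conditions (1)--(3) of Definition~\ref{cover}, and invoke Proposition~\ref{equivalence: cyclic poset=covering poset} for nondegeneracy via antisymmetry. Your argument is in fact more careful than the paper's in several places---you spell out the minimum-of-an-up-set argument in the forward direction (the paper simply cites Definition~\ref{cover}(3)), you make the antisymmetry collapse explicit in proving freeness for condition (1), and you address the $\mathbb N$ versus $\mathbb Z$ bookkeeping in condition (3) that the paper handles in one compressed sentence.
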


\begin{proof}
Suppose that $P$ is a $\mathbb Z$-poset with property $(\ast)$. Let $X=P/\mathbb Z$ be the set of orbits of the action of $\mathbb Z$ on $P$ and let $\pi:P\to X$ be the projection map. We shall verify that $P$ satisfies the three conditions in Definition \ref{cover} to show it is a covering poset of $X$.
\begin{enumerate}
\item Putting $a=b$ in $(\ast)$ we see that the action of $\mathbb Z$ on $P$ is a free action. Since the action is order preserving, each $\mathbb Z$-orbit must be poset isomorphic to $\mathbb Z$ as required.
\item Given $p\in P$, $\pi^{-1}\pi(p)$ is equal to the $\mathbb Z$-orbit of $p$. Thus the smallest element of $\pi^{-1}\pi(p)$ greater than $p$ is $1\cdot p$. Therefore, the action of $\sigma$ on $P$ is the action of $1\in\mathbb Z$. Since $1$ has inverse $-1\in \mathbb Z$, this action is invertible and, therefore, $\sigma$ is a poset automorphism of $P$ as required.
\item Condition (3) states: $\forall a,b\in P$ there exist $n,m\in\NN$ so that $a\le \sigma^n b\le \sigma^{n+m}a$.

Since $\sigma^na< \sigma^ma$ for all $n<m$, it suffices to find integers $n,m$ satisfying the inequalities above.
This follows from $(\ast)$ for $a,b$ and for $b,a$. Then there are $n,m\in \mathbb Z$ so that $a<n\cdot b=\sigma^n(b)$ and $b<m\cdot a=\sigma^m(a)$. This proves condition (3).
\end{enumerate}
Therefore, $P$ is a covering poset of the set $X=P/\mathbb Z$. The corresponding cyclic poset $(X,c)$ is nondegenerate by Theorem \ref{equivalence: cyclic poset=covering poset}.

Conversely, let $(X,c)$ be a nondegenerate cyclic poset. Then the covering poset $\widetilde X$ is a $\mathbb Z$-poset by Theorem \ref{thm: cyclic poset is set of cyclic Z-posets} and it satisfies $(\ast)$ by Definition \ref{cover}(3).
\end{proof}

\begin{eg}\label{eg: Zn}
For any integer $n\ge 2$, let $\frac1n\ZZ$ be the set of all rational numbers $z$ so that $nz$ is an integer. This is an $\mathcal S$-poset with $\mathcal S=\ZZ$ with the action of $m\in \ZZ$ on $\frac1n\ZZ$ given by adding $m$. Then the cyclic $\ZZ$-subposets of $\frac1n\ZZ$ are the $n$ cosets $[z]:=z+\ZZ$ of $\ZZ$ in $\frac1n\ZZ$. These correspond to the $n$ elements of the following cyclic poset $(Z_n,c)$ with cocycle $c$ given in Example \ref{cocycle10} and
\[
	Z_n:=\{e^{2\pi iz}\in S^1\,|\, z\in \tfrac1n\ZZ\}.
\]
\end{eg}

\subsection{Frobenius Categories $\mathcal FX$ associated to $(X,c)$}
The Frobenius categories of certain representations associated to cyclic posets that we will consider, will be $t^{\mathbb N}$-categories as defined by \cite{D} and \cite{vR}, which we now recall.  
\begin{defn} Let $k$ be a field and let $R=k[[t]]$. Then a $t^{\mathbb N}$-category over a set $X$, denoted by $RX$, is defined as a category which has:
\begin{itemize}
\item Indecomposable objects: to each $x\in X$ correspond an object denoted by $P_x$.
\item Morphisms: Hom$_{RX}(P_x,P_y)=Rf_{xy}\cong R$.
\item Compositions: $(sf_{yz})(rf_{xy})=(sr) t^nf_{xz}$ for some $n\in\mathbb N$.
\end{itemize}
\end{defn}

\begin{prop}[Proposition 1.2.5 in \cite{IT15a}] Let $(X,c)$ be a cyclic poset. Then there exists a $t^{\mathbb N}$-category over $X$ with composition given by 
 $(sf_{yz})(rf_{xy})=(sr) t^{c(x,y,z)}f_{xz}$ for all $s,r\in R$.
\end{prop}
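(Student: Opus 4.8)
The plan is to construct the category $RX$ explicitly and then verify the two category axioms---the unit laws and associativity---showing that each is forced by exactly one of the two defining conditions of a cyclic poset. I would take as objects the finite formal direct sums of the symbols $P_x$, $x\in X$ (the additive $R$-linear category generated by the $P_x$), set $\Hom_{RX}(P_x,P_y)=Rf_{xy}$ to be free of rank one over $R$ on the generator $f_{xy}$, and define composition on indecomposables by the stated rule $(sf_{yz})(rf_{xy})=(sr)t^{c(x,y,z)}f_{xz}$, extending to direct sums by the usual matrix multiplication. Because $c$ takes values in $\mathbb N$, the factor $t^{c(x,y,z)}$ is a genuine element of $R=k[[t]]$, so each composite is a legitimate morphism and the rule is manifestly $R$-bilinear; thus the only thing left to check is that this data forms a category, and it will then be a $t^{\mathbb N}$-category in the sense of the definition with $n=c(x,y,z)$.

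For the unit laws I would propose $f_{xx}$ as the identity of $P_x$. Composing with it on either side and invoking reducedness, $c(x,x,y)=0$ and $c(x,y,y)=0$, gives $(sf_{xy})(f_{xx})=s\,t^{c(x,x,y)}f_{xy}=sf_{xy}$ and $(f_{yy})(sf_{xy})=s\,t^{c(x,y,y)}f_{xy}=sf_{xy}$, so $f_{xx}$ is a two-sided identity. This is exactly where the \emph{reduced} hypothesis is used.

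For associativity I would take three composable generators $f_{xy},f_{yz},f_{zw}$ and compute the product in the two possible groupings. Both yield a scalar multiple of $f_{xw}$: computing it as $(f_{zw}f_{yz})f_{xy}$ gives exponent $c(y,z,w)+c(x,y,w)$, while computing it as $f_{zw}(f_{yz}f_{xy})$ gives exponent $c(x,y,z)+c(x,z,w)$. These two exponents coincide precisely because
\[
\delta c(x,y,z,w)=c(y,z,w)-c(x,z,w)+c(x,y,w)-c(x,y,z)=0,
\]
which is the cocycle condition $\delta c=0$. Extending $R$-bilinearly to arbitrary combinations and then to direct sums (where associativity is inherited from associativity of matrix multiplication together with the scalar case just verified) completes the argument.

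The key observation---and the only nontrivial step---is that the associativity constraint on the exponents is literally the vanishing of $\delta c$ on the $4$-tuple $(x,y,z,w)$, and that the unit laws are literally the reducedness of $c$. In other words, the axioms defining a cyclic poset are exactly calibrated so that the prescribed composition is unital and associative; once the correct $4$-tuple is identified there is no residual obstacle, only the routine bookkeeping of extending from indecomposables to the additive envelope.
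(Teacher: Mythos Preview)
Your argument is correct. The paper itself gives no proof of this proposition, merely citing it from \cite{IT15a}; your direct verification---unit laws from reducedness $c(x,x,y)=c(x,y,y)=0$ and associativity from $\delta c(x,y,z,w)=0$---is exactly the natural argument and is almost certainly what the cited reference does as well.
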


Given a cyclic poset $(X,c)$ and associated $t^{\mathbb N}$-category $RX$, we define the following category $\mathcal FX$, factorization category, which is shown to be Frobenius and consequently, its stable category will be triangulated.

\begin{defn} Let $(X,c)$ be a cyclic poset and $RX$ the associated $t^{\mathbb N}$-category. Define the \emph{factorization category} $\mathcal FX$ as the additive category with:
\begin{itemize}
\item Indecomposable objects: $E(x,y):=\left(P_x\oplus P_y, \varphi=
\begin {bmatrix}0&\beta \\ \alpha&0\end{bmatrix},
\varphi^2 = t \begin {bmatrix}1&0\\ 0&1\end{bmatrix}\right)$.
\item Morhisms: $f: E(x,y)\to E(x',y')$ so that $f\varphi = \varphi' f$.
\end{itemize}
\end{defn}

\begin{rem} \label{geodesic}
When the set $X$ is a subset of the circle $S^1$, any two elements $x,y\in X$ are points on the circle $S^1$ and, when $x\neq y$, the object $E(x,y)$ corresponds to the geodesic connecting $x,y$. 
\end{rem}

\begin{rem}\label{crossing}\label{geodesics} We will be using term `geodesic' in two different situations.
In Section \ref{CCC} the objects of the continuous cluster category $\cC_{cont}$ will correspond to actual geodesics in the hyperbolic plane $\mathfrak h^2$.
In other situations, when  the set $X$ is a subset of the circle $S^1$, any two elements $x,y\in X$ are points on the circle $S^1$. In that case we will associate to the object $E(x,y)$  the `closed geodesic' connecting $x,y$, i.e. geodesic of the hyperbolic plane together with the points $x, y$. We use this correspondence to draw collections of objects in various examples.
\end{rem}
\begin{rem} \label{noncrossing}
Two geodesics are said to be \emph{noncrossing} if they do not intersects. Two closed geodesics are called noncrossing if they do not intersect on their interiors.
\end{rem}
\begin{rem} \label{compatible/noncrossing}
When the indecomposable objects of a triangulated category correspond to geodesics, there is often a notion of ``compatibility'' of such objects when the corresponding geodesics do not cross. In Section 2 an algebraic definition of compatibility is given for objects $E(x,y)$ and $E(x',y')$ and this happens precisely when the corresponding geodesics are noncrossing:
For the continuous cluster category $\cC_{cont}$  and categories with spaced-out clusters, this is proved in Proposition 4.1.3 in \cite{IT15b}. In Section \ref{phi=canonical}, the same statement holds for triangulated categories  $\cC_{can}(Z)$ associated to admissible subsets $Z$ of $S^1$ (Lemma 2.4.4 in \cite{IT15a}).
\end{rem}
In order to construct triangulated categories, we will first construct Frobenius categories; recall that a category is Frobenius if: 1) it is an exact category, 2) projective and injective objects coincide and 3) it has enough projectives and it has enough injectives (\cite{HappelBook}, p.11).

\begin{thm}[Theorem 1.4.7, \cite{IT15b}] Let $\mathcal FX$ be the factorization category associated to the cyclic poset $(X,c)$. Then the category $\mathcal FX$ is Frobenius where: 
\begin{enumerate}
\item The indecomposable projective-injective objects in $\mathcal FX$ are (up to isomorphism) given by 
$E(x,x)=\left(P_x\oplus P_x, \varphi=
\begin {bmatrix}0&t \\ 1&0\end{bmatrix}\right)$, and 
\item For every indecomposable object $E(x,y)$, there exists a short exact sequence:
$$0\rightarrow E(x,y)\xrightarrow{i}E(x,x)\smallcoprod E(y,y)\xrightarrow{p}\Sigma E(x,y)\rightarrow 0.$$
Here $i$ is injective envelope, $p$ is projective cover and $\Sigma E(x,y)$ is the cokernel of $i$. 
\end{enumerate}
\end{thm}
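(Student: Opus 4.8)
The plan is to realize $\mathcal FX$ as a category of $t$-matrix factorizations over the additive hull $\add(RX)$ and to extract the Frobenius property from a pair of adjunctions, after which the two displayed claims become the explicit counit/unit sequences. First I would fix the exact structure: let $U\colon\mathcal FX\to\add(RX)$ be the functor forgetting $\varphi$, and declare a sequence in $\mathcal FX$ to be a conflation exactly when $U$ carries it to a split short exact sequence in $\add(RX)$. Since $\add(RX)$ is additive and idempotent complete, and its split sequences form an exact structure in which every object is projective-injective, a routine verification of Quillen's axioms shows that these $U$-split sequences make $\mathcal FX$ an exact category with $U$ an exact functor.

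Next I would produce the two adjoints to $U$. For each $x$ I claim that the free and the cofree factorization both equal $E(x,x)$: a direct computation using the relation $\varphi^2=t$ gives natural isomorphisms
\[
	\Hom_{\mathcal FX}(E(x,x),E)\cong\Hom_{\add(RX)}(P_x,UE),\qquad
	\Hom_{\mathcal FX}(E,E(x,x))\cong\Hom_{\add(RX)}(UE,P_x).
\]
Hence $P_x\mapsto E(x,x)$ extends to additive functors $F,G\colon\add(RX)\to\mathcal FX$ that are respectively left and right adjoint to $U$, and crucially $F\cong G$ since both agree with $E(\cdot,\cdot)$ on objects. Because $U$ sends conflations to split exact sequences and every object of $\add(RX)$ is there projective-injective, $F$ preserves projectives and $G$ preserves injectives; therefore each $E(x,x)=F(P_x)=G(P_x)$ is at once projective and injective in $\mathcal FX$.

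To complete the Frobenius axioms I would check that the counit $FU(E)\to E$ is a deflation with projective source and the unit $E\to GU(E)$ an inflation with injective target: under the adjunction isomorphisms the counit corresponds to $[\,\mathrm{id},\varphi\,]$ and the unit to its transpose $\begin{bmatrix}\mathrm{id}\\\varphi\end{bmatrix}$, each visibly a split epi (resp.\ split mono) on underlying objects. This furnishes enough projectives and injectives, and together with $F\cong G$ it forces projectives and injectives to coincide, so $\mathcal FX$ is Frobenius. That the indecomposable projective-injectives are exactly the $E(x,x)$ then follows by Krull--Schmidt: one checks $\End_{\mathcal FX}(E(x,x))\cong k[[s]]$ with $s^2=t$ is local, so $E(x,x)$ is indecomposable, while every projective is a summand of some $F(Q)=\smallcoprod_i E(x_i,x_i)$.

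Finally, claim (2) is the instance of these canonical sequences at $E(x,y)$: since $UE(x,y)=P_x\oplus P_y$ we obtain $GU(E(x,y))=E(x,x)\smallcoprod E(y,y)$, and the unit $i\colon E(x,y)\to E(x,x)\smallcoprod E(y,y)$ is a split monomorphism on underlying objects, hence an inflation whose cokernel I would take as the definition of $\Sigma E(x,y)$; the cokernel map $p$ then exhibits $\Sigma E(x,y)$ as a quotient of a projective. The hard part will be the hands-on step: writing $i$ explicitly as a matrix of twisted maps $r f_{\bullet\bullet}$, checking $i\varphi=\varphi' i$ via the composition rule involving $t^{c(\,\cdot\,)}$, computing the cokernel to identify $\Sigma E(x,y)$ again as an object of the form $E(\cdot,\cdot)$, and above all verifying that $i$ is not merely a monomorphism into an injective but an injective \emph{envelope} (equivalently that $p$ is a projective \emph{cover}), which requires essentiality/minimality of the extension rather than only the adjunction formalism.
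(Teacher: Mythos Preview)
The paper does not actually prove this theorem: it is quoted verbatim from \cite{IT15b} (their Theorem~1.4.7) and no argument is given here, so there is nothing in the present paper to compare your proposal against. That said, your outline is correct and is essentially the standard conceptual proof that a category of $t$-matrix factorizations is Frobenius.

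Your key computations are right. Writing a morphism $f\colon E(x,x)\to(Q,\psi)$ as $f=(f_0,f_1)$, the intertwining relation forces $f_1=\psi f_0$ and the remaining condition $tf_0=\psi^2 f_0$ is automatic; this gives the left adjunction $\Hom_{\mathcal FX}(E(x,x),E)\cong\Hom_{\add(RX)}(P_x,UE)$. Dually one finds $g=(g_0,g_1)^{t}\colon(Q,\psi)\to E(x,x)$ satisfies $g_0=g_1\psi$, giving the right adjunction. Your identification $\End_{\mathcal FX}(E(x,x))\cong k[[s]]$ with $s^2=t$ is also correct: commutation with $\varphi=\begin{bmatrix}0&t\\1&0\end{bmatrix}$ forces $f=\begin{bmatrix}a&tc\\c&a\end{bmatrix}$, and $a+cs\mapsto f$ is a ring isomorphism. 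From $F\cong G$ and the fact that $U$ reflects conflations to split sequences, the Frobenius property follows exactly as you say; the counit/unit at $E(x,y)$ produce the displayed sequence with middle term $E(x,x)\smallcoprod E(y,y)$.

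Two small refinements are worth making when you write it out. First, your formula for the unit should read $\eta=\begin{bmatrix}\varphi\\ \mathrm{id}\end{bmatrix}$ rather than $\begin{bmatrix}\mathrm{id}\\\varphi\end{bmatrix}$ under the conventions fixed by your adjunction isomorphism (this is harmless). Second, you correctly flag that the \emph{envelope} and \emph{cover} assertions need a minimality argument beyond the adjunction formalism; concretely, since $E(x,x)$ and $E(y,y)$ are indecomposable with local endomorphism rings and, for $x\neq y$, both components of $i$ are nonzero, no proper direct summand of $E(x,x)\smallcoprod E(y,y)$ can receive $i$, which gives essentiality. This is the only place where the explicit basic morphisms $f_{xy}$ and the cocycle $c$ must be invoked.
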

 The notation $\Sigma$ is used since the image of $\Sigma E(x,y)$ in the stable (triangulated) category is the shift of the image of $E(x,y)$ as stated  below in the Corollary \ref{triangulated}.
\noindent Since $\mathcal FX$ is a Frobenius category, the stable category $\underline{\mathcal FX}$ will be triangulated  by the theorem of Happel \cite{HappelBook}. So, we have the following corollary.

\begin{cor} \label{triangulated} Let $\underline{\mathcal FX}$ be the stable category of the Frobenius category $\mathcal FX$ associated to the cyclic poset $(X,c)$. 
Then $\underline{\mathcal FX}$  is a triangulated category and distinguished triangles 
$E(x,y)\xrightarrow{f} E(x',y')\xrightarrow{i'}E\rightarrow \Sigma E(x,y)$
can be obtained from the pushout diagrams of the exact sequences
$0\rightarrow E(x,y)\xrightarrow{i}E(x,x)\smallcoprod E(y,y)\xrightarrow{p}\Sigma E(x,y)\rightarrow 0$
along the maps $E(x,y)\xrightarrow{f}E(x',y')$.
\end{cor}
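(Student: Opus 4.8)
The plan is to obtain both assertions directly from Happel's theorem \cite{HappelBook} applied to the Frobenius category $\mathcal FX$. Since Theorem 1.4.7 establishes that $\mathcal FX$ is Frobenius, Happel's result immediately gives that the stable category $\underline{\mathcal FX}$ is triangulated, with shift functor $\Sigma$ defined as the cosyzygy: for each object $A$ one fixes a short exact sequence $0\to A\to I_A\to \Sigma A\to 0$ with $I_A$ injective, and $A\mapsto \Sigma A$ descends to a well-defined autoequivalence of $\underline{\mathcal FX}$ that is independent, up to natural isomorphism, of the choice of injective envelope. The first task is to check that the sequence of Theorem 1.4.7(2) is one such cosyzygy sequence for $A=E(x,y)$: its middle term $E(x,x)\oplus E(y,y)$ is projective-injective by part (1) of that theorem, and the map $i$ is an injective envelope by the same theorem, so the $\Sigma E(x,y)$ appearing in the statement is exactly Happel's shift of $E(x,y)$ and the notation is consistent.

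The second task is to match the triangles. Here I would recall Happel's description: a sequence is a distinguished triangle precisely when it is isomorphic, in $\underline{\mathcal FX}$, to one built from a morphism $f\colon A\to B$ by pushing out the fixed injective envelope $i_A\colon A\to I_A$ along $f$,
\[
\begin{array}{ccc}
A & \xrightarrow{\ f\ } & B\\
\downarrow{\scriptstyle i_A} & & \downarrow{\scriptstyle i'}\\
I_A & \longrightarrow & E,
\end{array}
\]
where the pushout of an admissible monomorphism is again an admissible monomorphism with $\coker i'\cong\coker i_A=\Sigma A$; the fourth map $E\to \Sigma A$ is the composite of the quotient $E\to\coker i'$ with this identification. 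Taking $A=E(x,y)$, $B=E(x',y')$, and $i_A=i$ the injective envelope of Theorem 1.4.7(2) produces exactly the sequence $E(x,y)\xrightarrow{f}E(x',y')\xrightarrow{i'}E\to \Sigma E(x,y)$ claimed in the corollary.

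This is an application of the general theory rather than a new argument, so I do not expect a serious obstacle; the content beyond citing Happel is the verification that Theorem 1.4.7(2) provides a genuine injective envelope of $E(x,y)$, which is what licenses its use as the distinguished cosyzygy sequence. The only mild subtlety worth flagging is that $\Sigma$ is well-defined only up to natural isomorphism in the stable category, so the sequences described are ``the'' distinguished triangles only up to the usual identification of cosyzygy objects; this is standard and is already built into Happel's theorem.
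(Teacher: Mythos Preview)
Your proposal is correct and takes essentially the same approach as the paper, which does not give a separate proof but simply notes before the corollary that ``since $\mathcal FX$ is a Frobenius category, the stable category $\underline{\mathcal FX}$ will be triangulated by the theorem of Happel.'' Your write-up in fact supplies more detail than the paper, spelling out explicitly why the sequence of Theorem 1.4.7(2) serves as the cosyzygy sequence and how Happel's pushout description yields the stated triangles.
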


\begin{eg} \label{ccc} Let $(X,c)$ be the cyclic poset from Example \ref{cocycle10}, i.e. $X=S^1$ and the cocycle $c(x,y,z)= 1$ if ordered triple $x,y,z\in S^1$ completes the circle and $c(x,y,z)= 0$ if ordered triple $x,y,z\in S^1$ does not complete the circle.
 Indecomposable objects $E(x,y)$ correspond to the arcs between the points $x,y\in S^1$.
 The objects $E(x,x)$ are projective-injective.
 The stable category $\underline{\mathcal FX}$ is the continuous cluster category $\mathcal C_{cont}$ of \cite{IT15b}. More details will be given in Section \ref{CCC}.
\end{eg}

\subsection{Cyclic posets with automorphisms - general twisted version: $(X,c)_{\varphi}$, $\mathcal F_{\varphi}(X)$} \label{Cyclic posets with automorphisms}

 We recall definitions and basic properties of admissible automorphisms from \cite{IT15a}. With  admissible automorphisms of cyclic posets we construct new families of Frobenius categories. This construction creates  triangulated categories again, and produces some new classes of categories with cluster structures. 
 
\begin{defn} \label{admissible automorphism} 
Let $(X,c)$ be a cyclic poset
with $\sigma: \widetilde X\to \widetilde X$ as in Definition \ref{cover}. An automorphism $\varphi$ of $(X,c)$ is called
\emph{admissible} if there is a $\sigma$-equivariant poset automorphism $\tilde\varphi$ of $\widetilde X$ which \emph{covers} $\varphi$ in the sense that $\pi \tilde \varphi=\varphi \pi:\widetilde X\to X$ and satisfies $\tilde x\le \tilde\varphi(\tilde x)\le \tilde\varphi^2(\tilde x)<\sigma\tilde x$ for all $\tilde x\in \widetilde X$. 
We denote by $(X,c)_{\varphi}$ a cyclic poset with admissible automorphism $\varphi$.
\end{defn}

\begin{eg}\label{eg: Zn and phi}
A basic example is $X=Z_n$ from Example \ref{eg: Zn}  for $n\ge 3$ with $\widetilde X=\frac1n\ZZ$ and $\pi^{-1}\pi(\tilde z)=\tilde z+\ZZ$. The \emph{standard admissible automorphism} $\tilde \varphi$ is given by $\tilde\varphi(\tilde z)=\tilde z+\frac1n$ (the smallest element of $\frac1n\ZZ$ larger than $\tilde z$). Then $\tilde\varphi^2(\tilde z)=\tilde z+\frac2n<\sigma(\tilde z)=\tilde z+1$ for all $\tilde z\in \frac1n\ZZ$ since $n\ge3$.
\end{eg}

The cyclic poset automorphism $\varphi$ induces an automorphism of the associated $t^{\mathbb N}$-category $RX$ giving the natural morphisms $P_x\xrightarrow{\eta_x}\varphi P_x=P_{\varphi x}\xrightarrow{\psi_x}P_x$. Here $\eta_x=f_{x\varphi(x)}$, and the map $\psi_x=rf_{\varphi(x)x}$ is chosen so that $(rf_{\varphi(x)x})(f_{x\varphi(x)}) = t f_{xx}$. One can also show that  $(f_{x\varphi(x)})(rf_{\varphi(x)x}) = t f_{\varphi(x)\varphi(x)}$. 
We use this to construct new Frobenius and triangulated categories in the following way.

\begin{defn} Let $\varphi$ be and admissible automorphism of the cyclic poset $(X,c)_{\varphi}$. Define the category $\mathcal F_{\varphi}(X)$ to be the full subcategory of the Frobenius category $\mathcal F(X)$ consisting of all $(P,d)$ where $d: P\to P$ factors through $\eta_P: P\to \varphi P$.
\end{defn}

\begin{thm}[Theorem 1.4.7. \cite{IT15a}] Let $\varphi$ be an admissible automorphism of the cyclic poset $(X,c)_{\varphi}$. Then the category $\mathcal F_{\varphi}(X)$ is Frobenius category with the indecomposable projective-injective objects being $E(x,\varphi(x))=\left(P_x\oplus P_{\varphi (x)}, \ \varphi=
\begin {bmatrix}0&\psi_x \\ \eta_x&0\end{bmatrix}\right).$
\end{thm}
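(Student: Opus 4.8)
The plan is to treat $\mathcal F_\varphi(X)$ as an extension-closed subcategory of the ambient Frobenius category $\mathcal FX$ and to transport the Frobenius structure, the only change being that the indecomposable projective-injectives $E(x,x)$ of $\mathcal FX$ get replaced by the ``shortest admissible arcs'' $E(x,\varphi(x))$. First I would determine which indecomposables lie in $\mathcal F_\varphi(X)$. For $E(x,y)=(P_x\oplus P_y,d)$ with $d=\begin{bmatrix}0&\beta\\ \alpha&0\end{bmatrix}$, the map $\eta_P$ is the diagonal $\begin{bmatrix}\eta_x&0\\ 0&\eta_y\end{bmatrix}$, so writing $d=g\,\eta_P$ and comparing entries shows that $E(x,y)\in\mathcal F_\varphi(X)$ exactly when $\alpha$ factors through $\eta_x\colon P_x\to P_{\varphi x}$ and $\beta$ factors through $\eta_y\colon P_y\to P_{\varphi y}$. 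Under the dictionary between the morphisms $f_{xy}$ of $RX$ and the covering poset, and after choosing lifts with $\tilde x\le\tilde y<\sigma\tilde x$, this becomes the condition $\tilde\varphi\tilde x\le\tilde y$ and $\tilde\varphi\tilde y\le\sigma\tilde x$. The borderline objects $E(x,\varphi(x))$ satisfy it precisely because the admissibility inequality $\tilde\varphi^2\tilde x<\sigma\tilde x$ holds; this is exactly where the hypothesis that $\varphi$ is admissible enters, and it is what singles out $E(x,\varphi(x))$ rather than $E(x,x)$ as the new distinguished objects.

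Second I would check that $\mathcal F_\varphi(X)$ is closed under extensions in $\mathcal FX$, so that it inherits an exact structure. Given a conflation $0\to A\to B\to C\to 0$ in $\mathcal FX$ with $A,C\in\mathcal F_\varphi(X)$, naturality of $\eta$ produces a morphism from this conflation to its image under the exact functor $\varphi$, and the problem reduces to lifting the given factorizations of $d_A$ and $d_C$ to a factorization of $d_B$ through $\eta_B$. I would carry this out using naturality of $\eta$ together with the fact that each $\eta_x=f_{x\varphi x}$ is a monomorphism over the domain $R=k[[t]]$, so that $B\in\mathcal F_\varphi(X)$ and $\mathcal F_\varphi(X)$ is exact.

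Third I would establish that $E(x,\varphi(x))$ is projective-injective and that there are enough such objects. The model is the ambient injective-envelope conflation $0\to E(x,y)\to E(x,x)\smallcoprod E(y,y)\to\Sigma E(x,y)\to 0$; the task is to build its $\varphi$-twisted analogue, in which $E(x,x)$ and $E(y,y)$ are replaced by minimal objects of the form $E(\cdot,\varphi(\cdot))$ capping the arc at each endpoint, and to check that all three terms lie in $\mathcal F_\varphi(X)$ and that the sequence is a conflation there. Such conflations give enough injectives and, dually, enough projectives. Projectivity and injectivity of $E(x,\varphi(x))$ itself would follow from the vanishing of $\Ext^1_{\mathcal F_\varphi(X)}(E(x,\varphi x),-)$ and $\Ext^1_{\mathcal F_\varphi(X)}(-,E(x,\varphi x))$, which I would read off from the covering-poset description of $\Hom$ and the constraint $\tilde\varphi^2\tilde x<\sigma\tilde x$: the extensions that survive in $\mathcal FX$ all involve arcs too short to remain in $\mathcal F_\varphi(X)$. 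Finally, the twisted conflation shows that whenever the injective envelope of an indecomposable $E(x,y)$ is not $E(x,y)$ itself---equivalently, whenever $E(x,y)$ is not already of the form $E(x,\varphi x)$---that object is not injective; combined with the proj-injectivity of the $E(x,\varphi x)$ this identifies the indecomposable projective-injectives as exactly the $E(x,\varphi(x))$ and forces projectives and injectives to coincide with $\add\{E(x,\varphi(x))\}$, yielding the Frobenius property.

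The hard part will be the third step: constructing the correct $\varphi$-twisted injective-envelope and projective-cover conflations and confirming they stay inside $\mathcal F_\varphi(X)$. This is precisely where the full chain of admissibility inequalities $\tilde x\le\tilde\varphi\tilde x\le\tilde\varphi^2\tilde x<\sigma\tilde x$ is indispensable: it guarantees that $E(x,\varphi(x))$ is an object of $\mathcal F_\varphi(X)$, that it is indecomposable, and that capping an arc by such minimal objects never shortens the complementary arc below the $\varphi$-threshold. The remaining verifications---that the matrix entries defining the maps in these conflations are divisible by the powers of $t$ prescribed by the cocycle $c$---are the routine but delicate computations I would defer.
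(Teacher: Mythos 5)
Since the paper states this theorem as an import from \cite{IT15a} and gives no proof of its own, I am judging your outline on its merits. Your first step is correct: because every $\Hom$-set in $RX$ is a free rank-one module over the domain $R=k[[t]]$, writing $d=g\,\eta_P$ and comparing entries does show that $E(x,y)\in\mathcal F_\varphi(X)$ exactly when $\alpha$ factors through $\eta_x$ and $\beta$ through $\eta_y$, and the admissibility inequality $\tilde\varphi^2\tilde x<\sigma\tilde x$ is precisely what places $E(x,\varphi(x))$ in the subcategory. The fatal problem is your second step: the claim that $\mathcal F_\varphi(X)$ is \emph{closed under extensions} in $\mathcal FX$ is false whenever $\varphi\neq\mathrm{id}$, so no argument via naturality of $\eta$ can establish it. Concretely, take $X=S^1$, $\varphi=\varphi_\theta$ rotation by $\theta=\pi/2$, $x=0$, $y=\pi$, and $A=C=E(x,y)$, which lies in $\mathcal F_\varphi(X)$ since both arcs have length $\pi\ge\theta$. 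Set
\[
h=\begin{bmatrix}1&0\\0&-1\end{bmatrix}:P_x\oplus P_y\to P_x\oplus P_y,
\qquad
e=\begin{bmatrix}d&h\\0&d\end{bmatrix}.
\]
Then $dh+hd=0$, so $e^2=t$ and $0\to A\to B\to C\to 0$ with $B=(P_A\oplus P_C,e)$ is a conflation in $\mathcal FX$. But the entry $1:P_x\to P_x$ of $e$ cannot factor through $\eta_x$, because every composite $P_x\to P_{\varphi x}\to P_x$ is divisible by $t^{c(x,\varphi x,x)}=t$; since the defining condition of $\mathcal F_\varphi(X)$ is isomorphism-invariant (by naturality of $\eta$), $B\notin\mathcal F_\varphi(X)$. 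Indeed $B\cong E(x,x)\oplus E(y,y)$: the ambient injective-envelope conflation $0\to E(x,y)\to E(x,x)\oplus E(y,y)\to E(y,x)\to0$ already has both end terms in $\mathcal F_\varphi(X)$ (note $E(y,x)\cong E(x,y)$) while its middle term is not.

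The consequence is structural, not cosmetic: the exact structure on $\mathcal F_\varphi(X)$ cannot be ``inherited by extension-closedness.'' One must instead \emph{declare} the conflations of $\mathcal F_\varphi(X)$ to be those $\mathcal FX$-exact sequences all three of whose terms lie in the subcategory, and verify the exact-category axioms directly. Note that membership of the middle term is equivalent to the connecting map $h$ itself factoring as $h=\hat h\,\eta_{P_C}$ (the diagonal blocks of $g\,\eta_B$ handle $d_A,d_C$ and the off-diagonal block forces $h$), and then the axioms do go through: a pushout along $f:A\to A'$ replaces $h$ by $f\hat h\,\eta$, and a pullback along $g:C''\to C$ replaces it by $\hat h\,\eta g=\hat h\,\varphi(g)\,\eta$, both of which factor. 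So the admissible conflations are distinguished precisely by the factorization of $h$ that your proposal would grant to all $h$. Your third step is also affected: the correct twisted injective envelope of $E(x,y)$ is $E(\varphi^{-1}x,x)\oplus E(\varphi^{-1}y,y)$ with cokernel $E(\varphi^{-1}y,\varphi^{-1}x)$ (consistent with the paper's formula $\Sigma E(x,y)=E(\varphi^{-1}y,\varphi^{-1}x)$), not a capping by $E(x,\varphi x)$ at each endpoint on the injective side, and one must check that the connecting map of this sequence factors through $\eta$ --- which is again where $\tilde x\le\tilde\varphi\tilde x\le\tilde\varphi^2\tilde x<\sigma\tilde x$ enters. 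As written, your outline proves the theorem only after its central lemma is replaced by this different and genuinely harder verification.
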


There are some known and some new  categories with cluster structures, which can be obtained as stable categories of such twisted Frobenius categories: cluster categories of type $A_n$, infinity-gon, $m$-cluster category of type $A_{\infty}$, etc. (many of these can be found in 2.6 Chart of Examples \cite{IT15a}). We will give precise descriptions in   Sections \ref{phi=id}, \ref {phi=canonical} and \ref{phi=theta}.

\begin{defn} \label{twisted stable category} Let $\varphi$ be an admissible automorphism of the cyclic poset $(X,c)$. Let $\mathcal F_{\varphi}(X)$ be the Frobenius category. Define the twisted stable category $\mathcal C_{\varphi}(X):= \underline{\mathcal F_{\varphi}}(X)$\end{defn}
In order to describe compatible sets and cluster structures in $\mathcal C_{\varphi}(X)$, we will use the following proposition which relates Ext$_{\mathcal C_{\varphi}(X)}$ and Hom$_{\mathcal C_{\varphi}(X)}$.

\begin{prop} Let $\mathcal C_{\varphi}(X)$ be the twisted stable  category associated to the poset $(X,c)_{\varphi}$. Then  $\Sigma (E(x,y))= E(\varphi^{-1}y,\varphi^{-1}x)$ and this correspondence is functorial.
\end{prop}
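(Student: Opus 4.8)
The plan is to compute the shift directly inside the Frobenius category $\mathcal F_\varphi(X)$, imitating the short exact sequence of the Frobenius structure theorem (Theorem 1.4.7 of \cite{IT15b}) but now using the twisted indecomposable projective--injectives $E(z,\varphi z)$ in place of the $E(z,z)$. I will use the standard fact about Frobenius categories: if $0\to A\to I\to B\to 0$ is a short exact sequence with $I$ injective (not necessarily the minimal injective envelope), then $B\cong\Sigma A$ in the stable category, since any two such injective co-extensions differ only by projective--injective summands, which vanish stably. Because a direct sum of objects $E(z,\varphi z)$ is injective in $\mathcal F_\varphi(X)$, it therefore suffices to produce, for each pair $x,y$, a short exact sequence
\[
0\to E(x,y)\xrightarrow{\,i\,} E(\varphi^{-1}x,x)\oplus E(\varphi^{-1}y,y)\xrightarrow{\,p\,} E(\varphi^{-1}y,\varphi^{-1}x)\to 0
\]
in $\mathcal F_\varphi(X)$; reading off the cokernel then gives $\Sigma E(x,y)=E(\varphi^{-1}y,\varphi^{-1}x)$. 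As a sanity check, setting $\varphi=\mathrm{id}$ recovers the untwisted sequence $0\to E(x,y)\to E(x,x)\oplus E(y,y)\to E(y,x)\to 0$ and the expected relation $\Sigma E(x,y)=E(y,x)\cong E(x,y)$.

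First I would write the maps explicitly on underlying modules. Writing $E(a,b)=(P_a\oplus P_b,\,d_{ab})$ with $d_{ab}=\left[\begin{smallmatrix}0&f_{ba}\\ f_{ab}&0\end{smallmatrix}\right]$, I define $i$ by the identity on $P_x$ (into the $P_x$-summand of $E(\varphi^{-1}x,x)$) and on $P_y$ (into the $P_y$-summand of $E(\varphi^{-1}y,y)$), together with the structure maps $f_{y,\varphi^{-1}x}\colon P_y\to P_{\varphi^{-1}x}$ and $f_{x,\varphi^{-1}y}\colon P_x\to P_{\varphi^{-1}y}$; dually $p$ is assembled from the remaining structure maps onto $P_{\varphi^{-1}y}\oplus P_{\varphi^{-1}x}$. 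Because of the identity blocks, $i$ is a split $R$-module injection, $p$ a split surjection, and a direct rank/generator count shows $\ker p=\im i$, so the sequence is exact as $R$-modules; what genuinely must be checked is that $i$ and $p$ are morphisms of $\mathcal F_\varphi(X)$, i.e. that they commute with the differentials $d$.

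This verification is the crux of the argument and the step I expect to be the main obstacle. Commuting with $d$ reduces to a collection of identities among the twisted compositions $f_{bc}f_{ab}=t^{c(a,b,c)}f_{ac}$, and the essential point is that the cocycle exponents on the two sides of each square match exactly, with $d^2=t$ respected and no extra $t$-torsion surviving into the cokernel. This is precisely where the admissibility inequalities $\tilde x\le\tilde\varphi(\tilde x)\le\tilde\varphi^2(\tilde x)<\sigma\tilde x$ of Definition \ref{admissible automorphism} are used: they place $\varphi^{-1}x,\varphi^{-1}y$ in the correct position relative to $x,y$ on the covering poset $\widetilde X$, so that the relevant composites carry the minimal $t$-power. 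Concretely I would translate each commutativity square and each exactness claim into an equality of $b$-values (equivalently cocycle values) on $\widetilde X$ and verify it from the defining inequalities; everything outside this bookkeeping is formal.

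Finally, for functoriality, note that $\Sigma$ is the suspension of the triangulated category $\mathcal C_\varphi(X)$ and is already a functor, so the content is that the object assignment $E(x,y)\mapsto E(\varphi^{-1}y,\varphi^{-1}x)$ upgrades to a natural isomorphism. I would realize this assignment as a composite of two honest autoequivalences of $\mathcal C_\varphi(X)$: the functor $R_{\varphi^{-1}}$ induced by the cyclic poset automorphism $\varphi^{-1}$, sending $E(a,b)\mapsto E(\varphi^{-1}a,\varphi^{-1}b)$ and $f_{ab}\mapsto f_{\varphi^{-1}a,\varphi^{-1}b}$ (a functor because $\varphi^{-1}$ preserves $c$ and hence the composition rule), followed by the reversal autoequivalence $S$ with $S(E(a,b))=E(b,a)$, realized on objects by the swap isomorphism of the summands $P_a\oplus P_b$. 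Since the middle term of the sequence above is injective, every morphism $E(x,y)\to E(x',y')$ lifts to a morphism of the three-term sequences, unique up to maps factoring through projective--injectives; the induced map on cokernels is by definition $\Sigma$ applied to the morphism, and it coincides with $S\circ R_{\varphi^{-1}}$ applied to it. This produces the desired natural isomorphism $\Sigma\cong S\circ R_{\varphi^{-1}}$, establishing that the correspondence is functorial.
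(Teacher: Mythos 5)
The paper itself states this proposition without proof (it is survey material recalled from \cite{IT15a}), so there is no in-paper argument to compare against; your proposal reconstructs what is essentially the argument of that source, and its skeleton is correct. The sequence
\[
0\to E(x,y)\xrightarrow{\,i\,} E(\varphi^{-1}x,x)\oplus E(\varphi^{-1}y,y)\xrightarrow{\,p\,} E(\varphi^{-1}y,\varphi^{-1}x)\to 0
\]
is exactly the right one: the middle term is a sum of the twisted projective--injectives $E(z,\varphi z)$ (with $z=\varphi^{-1}x$ and $z=\varphi^{-1}y$), it specializes correctly at $\varphi=id$, your Schanuel-type remark that a non-minimal injective coextension still computes $\Sigma$ is a standard Frobenius fact, and the rank argument for exactness works over $R=k[[t]]$ (a surjection between free rank-two $R$-modules is an isomorphism). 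The functoriality argument via lifting maps of short exact sequences, identifying $\Sigma$ with the rotation functor up to the swap isomorphism $E(a,b)\cong E(b,a)$, is also sound.

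There is, however, one concrete inaccuracy at the step you correctly single out as the crux. You assert that the cocycle identities making $i$ and $p$ commute with the differentials follow from the admissibility inequalities $\tilde x\le\tilde\varphi(\tilde x)\le\tilde\varphi^2(\tilde x)<\sigma\tilde x$. They do not: admissibility locates $\varphi^{-1}x$ just below $x$ on $\widetilde X$ but says nothing about its position relative to $y$. What the commutativity actually needs is, e.g., $c(y,\varphi^{-1}x,x)=0$, so that $\eta_{\varphi^{-1}x}\circ f_{y,\varphi^{-1}x}=f_{yx}$ with no stray power of $t$, and similarly $c(x,\varphi^{-1}y,y)=0$ and $c(x,y,\varphi^{-1}x)=0$ for the odd squares involving $\psi$. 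These identities come from the \emph{defining membership condition} of $\cF_\varphi(X)$ --- that the differential of $E(x,y)$ factors through $\eta$ --- which forces $c(x,\varphi x,y)=0=c(y,\varphi y,x)$, i.e.\ $y\notin(x,\varphi x)$ and $x\notin(y,\varphi y)$ cyclically; only then does $\varphi^{-1}x$ lie on the arc from $y$ to $x$ (and $\varphi^{-1}y$ on the arc from $x$ to $y$), giving the additivity of $b$-values you need. For a hypothetical pair with $y$ strictly between $x$ and $\varphi x$ your squares would fail to commute; the point is that such $E(x,y)$ is not an object of $\cF_\varphi(X)$ at all. With the hypothesis attributed correctly, the deferred bookkeeping does check out and the rest of your argument goes through.
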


\begin{eg}\label{eg: C(Zn)}
In the basic case of Example \ref{eg: Zn and phi}, the indecomposable objects are given, up to isomorphism, by the chords of a regular $n$-gon. The vertices correspond to elements of $Z_n$ and so the chord $\overline{xy}$ corresponds to the object $E(x,y)\cong E(y,x)$. Then $\varphi^{-1}$ is clockwise rotation by $2\pi/n$. The chord $\overline{\varphi^{-1}x\varphi^{-1}y}$ corresponding to $\Sigma E(x,y)=E(\varphi^{-1}x,\varphi^{-1}y)$ is given by rotating the chord $\overline{xy}$ clockwise by $\pi/n$. The $n$ sides of the regular $n$-gon correspond to $E(x,\varphi(x))$ and $E(x,\varphi^{-1}(x))$ which are projective-injective in $\mathcal F_\varphi(Z_n)$ and thus equal to zero in $\mathcal C_\varphi(Z_n)$. Only the remaining $\binom n2-n$ chords are nonzero. So, we need $n\ge4$.
\begin{center}
\begin{tikzpicture}[scale=.75]
\begin{scope}
\foreach \x in {10,30,50,70,90,110,130,150,170}
\draw[fill,very thick,rotate=\x] (3,-.5)--(3,.5) circle[radius=2pt];
\end{scope}
\begin{scope}[rotate=180]
\foreach \x in {10,30,50,70,90,110,130,150,170}
\draw[fill,very thick,rotate=\x] (3,-.5)--(3,.5) circle[radius=2pt];
\end{scope}
\foreach \y/\ytext in {30/x, 50/\varphi x, 10/\quad\varphi^{-1}x, 170/y, 190/\varphi y,150/\varphi^{-1}y\quad}
\draw[rotate=\y] (3.5,.5) node{$\ytext$};
\begin{scope}
\draw[thick] (2.3,1.9)--(-3,0);
\draw[thick,rotate=-20] (2.3,1.9)--(-3,0);
\draw (1,2) node{$E(x,y)$};
\draw (1.6,.5) node{$\Sigma E(x,y)$};
\draw (.95,-.25) node{$= E(\varphi^{-1}x,\varphi^{-1}y)$};
\end{scope}
\end{tikzpicture}
\end{center}

It is well-known that the indecomposable objects of the cluster category of type $A_{n}$ correspond to the chords of a regular $(n+3)$-gon \cite{CCS}. Thus $\mathcal C_\varphi(Z_{n+3})$ is equivalent to the cluster category of type $A_{n}$ for $n\ge4$.
\end{eg}

\subsection{Cluster structures} We now recall the notion of cluster structures as introduced in \cite{BIRSc} and \cite{BIRSm},  and describe several known and some new classes of the cyclic posets for which the stable categories $\underline{\mathcal FX}$ have cluster structures. First, we recall the definition of the quiver $Q_{\mathcal T}$ of a Krull-Schmidt category $\mathcal T$: the vertices of the quiver correspond to the indecomposable objects in $\mathcal T$ and the number of arrows $T_i\to T_j$ between two indecomposable objects $T_i$ and $T_j$ is given by the dimension of the space of irreducible maps $rad(T_i,T_j)/rad^2 (T_i,T_j)$. Here $rad(\  ,\ )$ denotes the radical in add$T$, where the objects are finite direct sums of objects in $\mathcal T$.

\begin{defn} \label{Cluster structures}  A \emph{cluster structure} on a Krull-Schmidt triangulated category $\mathcal C$ is a collection of sets $\mathcal T$, called \emph{clusters}, of mutually non-isomorphic indecomposable objects called \emph{variables} satisfying the following conditions:
\begin{enumerate}
\item For any cluster variable $T$ in any cluster $\mathcal T$ there is, up to isomorphism, a unique object $T^*$ not isomorphic to $T$ so that $\mathcal T^*:=\mathcal T\backslash T\cup T^*$ is a cluster;
\item There are distinguished triangles $T^*\rightarrow B\rightarrow T$ and  $T\rightarrow B'\rightarrow T^*$
so that $B$ is a minimal right add$(\mathcal T\backslash T)$-approximation of $T$ and $B'$ is a minimal
left add$(\mathcal T \backslash T)$-approximation of $T$. 
\item There are no loops or 2-cycles in the quiver $Q_{\mathcal T}$ of any cluster $\mathcal T$. 
\item The quiver $Q_{\mathcal T^*}$ of $\mathcal T^*$ is obtained from the quiver $Q_{\mathcal T}$ of $\mathcal T$ by a Derksen-Weyman-Zelevinsky mutation;
\item If $\mathcal T'$ is obtained from $\mathcal T$ by replacing each variable with an isomorphic object,
then $\mathcal T'$ is a cluster.
\end{enumerate}
\end{defn}

The continuous cluster category is an example of a triangulated Krull Schmidt category with cluster structure as in the Definition \ref{Cluster structures}: actually this category was the original motivation for introducing cyclic posets.

\begin{eg} Let $(X,c)$ be the cyclic poset from Example \ref{cocycle10}, i.e. $X=S^1$ and the cocycle $c(x,y,z)= 1$ if ordered $x,y,z\in S^1$ complete the circle and $c(x,y,z)= 0$ if ordered $x,y,z\in S^1$ do not complete the circle.
\begin{itemize}
\item Indecomposable objects $E(x,y)$ in the Frobenius category $\cF X$ correspond to the geodesics between the points $x,y\in S^1$.
\item The objects $E(x,x)$ are projective-injective and correspond to the points on $S^1$.
\item The stable category $\underline{\mathcal FX}$ is the continuous cluster category $\mathcal C_{cont}$ of \cite{IT15b}.
\end{itemize}
\end{eg}

\section{Cluster structures on cyclic subposets of $S^1$ with automorphism $\varphi = id$}\label{phi=id}\label{subsets}
In this section we describe two cluster structures on 
the cyclic posets $(X,c)$ with $X\subset S^1$
which are obtained by using the admissible automorphism $\varphi=id$.  When $X=S^1$ we obtain the continuous cluster category $\mathcal C_{cont}=\cC_{id}(S^1)$  (see Section \ref{CCC}, this category was studied and described in two different ways in \cite{IT15a} and \cite{IT15b}).
 Another family  $\mathcal C_n=\cC_{id}(Z_n)$ (see Section \ref{SOC}), are the `spaced out' cluster categories which are obtained by taking $X=Z_n=n$-gon, however these categories are different from the classical cluster categories  given by triangulations of $n$-gon, associated to the Dynkin quiver $A_{n-3}$. A relation among these is given in Proposition \ref{spaced-out embedding}.

In the first case we summarize results from \cite{IT15a} and in the second we give description of compatibility, theorem about cluster structure, geometric interpretation and two figures describing the clusters.

\subsection{Compatibility and Clusters} 
We point out that the notion of compatibility used to define cluster structures on a triangulated categories can vary. Here we give the definition for the categories with automorphism $\varphi=id$ which is different from the condition in the classical  cluster categories.

\begin{defn} \label{Ext} Let $\mathcal C_{id}(X)$
be the triangulated category
associated to a cyclic poset $(X,c)_{id}$. Two indecomposable objects $E(x,y)$ and $E(x',y')$ in $\mathcal C_{id}(X)$ are said to be \emph{compatible} if 
$$\dim_k\Ext^1_{\mathcal C_{id}(X)}(E(x,y),E(x',y'))+\dim_k\Ext^1_{\mathcal C_{id}(X)}(E(x',y'),E(x,y))\leq 1.$$
\end{defn}

\begin{lem} \label{Hom} Let  $\mathcal C_{id}(X)$ be the triangulated category associated to a cyclic poset $(X,c)_{id}$. Then $E(x,y)$ and $E(x',y')$ are  \emph{compatible} if 
$$\dim_k\Hom_{\mathcal C_{id}(X)}(E(x,y),E(x',y'))+\dim_k\Hom_{\mathcal C_{id}(X)}(E(x',y'),E(x,y))\leq 1.$$
\end{lem}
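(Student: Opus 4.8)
The plan is to reduce the Ext-inequality that defines compatibility (Definition \ref{Ext}) to the stated Hom-inequality by identifying the shift functor $\Sigma$ with the identity on the relevant objects. In any triangulated category one has the tautological identity $\Ext^1(A,B)=\Hom(A,\Sigma B)$, so the entire argument comes down to computing $\Sigma E(x',y')$ and $\Sigma E(x,y)$ in $\cC_{id}(X)$ and observing that they return the objects one started with.

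First I would invoke the proposition computing the shift, namely $\Sigma E(x,y)=E(\varphi^{-1}y,\varphi^{-1}x)$, and specialize it to the case $\varphi=id$ treated throughout this section. Since $\varphi=id$ is admissible (the inequality $\tilde x\le\tilde\varphi(\tilde x)\le\tilde\varphi^2(\tilde x)<\sigma\tilde x$ of Definition \ref{admissible automorphism} reduces to $\tilde x<\sigma\tilde x$) and $\varphi^{-1}=id$, the formula gives $\Sigma E(x',y')=E(y',x')$. Because the indecomposable object $E(x',y')$ is defined on the underlying module $P_{x'}\oplus P_{y'}$, interchanging the two summands produces a canonical isomorphism $E(y',x')\cong E(x',y')$ in $\cF(X)$, hence in the stable category $\cC_{id}(X)$. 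Thus $\Sigma$ is isomorphic to the identity functor on the objects $E(x,y)$, which is the algebraic incarnation of the relation $E(x,y)\cong E(x,y)[1]$ noted in the introduction.

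Next I would feed $\Sigma E(x',y')\cong E(x',y')$ into the Ext-Hom identity, obtaining an isomorphism of $k$-vector spaces
\[
\Ext^1_{\cC_{id}(X)}(E(x,y),E(x',y'))\cong\Hom_{\cC_{id}(X)}(E(x,y),E(x',y')),
\]
and symmetrically with the two objects interchanged. Summing the two isomorphisms shows that the Ext-sum appearing in Definition \ref{Ext} equals the Hom-sum appearing in the lemma, so the bound $\le 1$ on the Hom-sum forces the bound $\le 1$ on the Ext-sum, which is precisely compatibility. (In fact the argument yields an equivalence, not merely the stated implication.)

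The step I expect to require the most care is the second one: one must check that $\Sigma E(x',y')\cong E(x',y')$ is a genuine isomorphism in the triangulated category $\cC_{id}(X)$, not just on underlying modules, and that it is natural enough to induce a dimension-preserving isomorphism of the two Hom-spaces. This is where I would use that the summand-swap is an honest isomorphism of objects, so that $\Hom(-,\Sigma E(x',y'))\cong\Hom(-,E(x',y'))$ as functors, together with the functoriality of the shift asserted in the shift proposition; everything else is formal once the shift has been pinned down.
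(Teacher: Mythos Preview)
Your proof is correct and follows exactly the same approach as the paper: the paper's proof is the single sentence ``This follows immediately from Definition \ref{Ext} since Hom$=$Ext$^1$ when $\varphi$ is the identity automorphism,'' and your argument spells out precisely why $\Hom=\Ext^1$ holds (namely $\Sigma E(x,y)=E(\varphi^{-1}y,\varphi^{-1}x)\cong E(x,y)$ when $\varphi=id$). Your observation that the argument actually yields an equivalence, not just an implication, is also correct and consistent with how the paper uses the lemma later.
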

\begin{proof} This follows immediately from Definition \ref{Ext} since Hom$=$Ext$^1$ when $\varphi$ is the identity automorphism, i.e. in the category $\mathcal C_{id}(X)$.
\end{proof}

\subsection{Continuous Cluster Categories}\label {CCC} The category $\mathcal C_{cont}$ is of the form $\mathcal C_{id}(S^1)$. It is a triangulated category with the cluster structure where each cluster $\mathcal T$ is a maximal collection of mutually compatible objects, which satisfy Hom condition of  Lemma \ref{Hom}. 

\begin{rem} 
While the continuous cluster category has a cluster structure, this category is quite different from the classical cluster categories of \cite{BMRRT}, \cite {CCS} or \cite{Am}. Here are some of its interesting properties: 
\begin{enumerate}
\item All clusters $\mathcal T$ are isomorphic to each other  (Theorem 5.2.1 in \cite{IT15b}). Thus, there is essentially only one cluster in $\cC_{cont}$. Again, in most known cluster categories, clusters are not isomorphic to each other.
\item All clusters, i.e. subcategories $\mathcal T$ are infinite since one of them is infinite (Proposition 4.2.7 in \cite{IT15b}). This is quite different from the situations when clusters correspond to modules or objects in the cluster categories.
\item The clusters $\mathcal T$ are not functorially finite subcategories. In other situations, when clusters are objects, they are always functorially finite. 

In order to see that the clusters in $\mathcal C_{cont}$ are not functorially finite (contravariantly finite and covariantly finite), it is enough to find one cluster $\mathcal T$ (since all of them are isomorphic) and show that it is not contravariantly finite. Hence, for a fixed cluster $\mathcal T$ it is enough to find  at least one object $M$ in $\mathcal C_{cont}$ such that there is no contravariant  $\add \mathcal T$-approximation of $M$. Recall that contravariant $\mathcal T$-approximation of $M$ is a morphism $f_M:T_M\to M$, with $T_M$ in $add \mathcal T$,  such that any map $\alpha :T'\to M$, with $T'$ in $add \mathcal T$  factors though $f_M$. 

For this, it is convenient to use the original description of $\mathcal C_{cont}$ from \cite{IT15b} as the orbit category of the triangulated category $\mathcal D$:  the indecomposable objects in $\mathcal D$ are $\{M(x,y)\in \mathbb R^2\ |\  |x-y|< 1\}\subset \mathbb R^2$  and Hom$_{\mathcal D}(M(x,y),M(x',y'))= k$ if the slope $(y'-y)/(x'-x)\geq 0$ and otherwise is $0$. The functor $F:\mathcal D\to \mathcal D$ defined as $F(M(x,y)):= M(y+1,x+1)$ is used to define orbit category $\mathcal C :=\mathcal D/F$ and it is proved in \cite{IT15b} that  this category has cluster structure and $\mathcal C$ was called continuous cluster category and denoted by $\mathcal C_{cont}$. It was shown in \cite{IT15a} that this category is isomorphic to the cluster category $\mathcal C_{id}(S^1)$ as defined in the Def.\ref{twisted stable category}.

A collection of objects (orbits of points $M(x,y)$ in $\mathcal D$) given by: \\
$M^{(n)}_j=((1-j)/2^n, (2^n-j)/2^n))$, for integers $n\geq 0$ and $0\leq j < 2^{n}$ forms a cluster in $\mathcal C_{cont}$  (Proposition 4.2.7. in \cite{IT15b}, with small notational adjustment).
With that, one can see that $M(1/3,1/3)$ does not have $add \mathcal T$-approximation (actually this is true for any $M(x,y)$ where either $x\neq m/2^k$ or $y\neq n/2^k$ for any $k\in \mathbb Z_{\geq 0}$).

\item Objects in $\mathcal T$ are not mutually Ext$^1$-orthogonal. Compatibility is given instead by the Ext-condition of Definition \ref{Ext} or, equivalently, by the Hom-condition of Lemma \ref{Hom}. 
\item {Unlike the other known cases of cluster categories, the category $\mathcal C\!_{cont}$ is not  $m$-Calabi-Yao for any $m$ since $X[m]\cong X$ for every object $X$. The definition of $m$-CY is that $D\Hom(X,Y)\cong\Hom(Y,X[m])$. However, there are compatible $X,Y$ where $\Hom(X,Y)=0$ but $\Hom(Y,X[m])=\Hom(Y,X)\neq0$. So, $\mathcal C\!_{cont}$ is not  $m$-CY.}
\end{enumerate}
\end{rem}

\begin{rem} Correspondence with hyperbolic plane. Indecomposable objects $E(x,y)$ of the continuous cluster category $\mathcal C\!_{cont}$ correspond to geodesics in the hyperbolic plane.
Clusters in $\mathcal C\!_{cont}$  correspond to ideal triangulations in the hyperbolic plane.

More precisely:
The hyperbolic plane $\mathfrak h^2$ is embedded conformally onto the open unit disk in $\RR^2$ which means that angles are preserved or, equivalently, the metric is dilated in the same proportion in every direction. The hyperbolic metric is given by
\[
	d\lambda=\frac{dr}{1-r^2}
\]
where $r$ is the Euclidean distance to the origin. This is equivalent to saying that the geodesics are the straight lines through the origin and circles (the portion inside $\mathfrak h^2$) which meet the unit circle centered at the origin at two right angles. These circles are always centered outside the unit disk.

The unit circle is sometimes called the \emph{ideal boundary} of $\mathfrak h^2$ since the points on the unit circle are, in hyperbolic metric, infinitely far away from the origin and thus not actually elements of $\mathfrak h^2$. In fact, the hyperbolic distance from a point to the origin is
\[
	\lambda=\int_0^r \frac{dt}{1-t^2}=\frac12 ln\left|
	\frac{1+r}{1-r}
	\right|
\]
which goes to $\infty$ as $r\to 1$. The indecomposable object $E(x,y)$ corresponds to the unique geodesic in $\mathfrak h^2$ with boundary (limit points) $x,y$. The geodesics corresponding to $E(x,y)$ and $E(x',y')$ will \emph{cross} (meet in the interior $\mathfrak h^2$ of the unit disk) if and only if they are `crossing', or equivalently, are not compatible, by Remark \ref{compatible/noncrossing} For example, in Figure \ref{Fig: hyperbolic plane}, $E(x,z)$ is compatible with $E(x,y)$ since morphisms are given by counter clockwise rotation and any rotation of $\widehat{xz}$ to $\widehat{xy}$ factors though $\widehat{xx}$ which corresponds to $E(x,x)=0$ and we get $\Hom(E(x,z),E(x,y))=0$ in $\cC_{id}(S^1)$.
\end{rem}

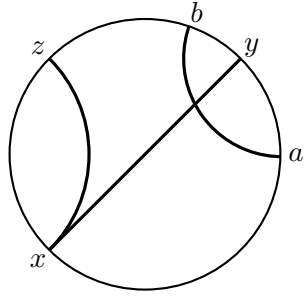
\begin{figure}[htbp]
\begin{center}
\begin{tikzpicture}[scale=.6]
\draw[thick] (0,0) circle [radius=3cm];
\draw[rotate=225] (3.35,0) node{$x$};
\draw[rotate=45] (3.35,0) node{$y$};
\draw[rotate=135] (3.35,0) node{$z$};
\draw[rotate=0] (3.35,0) node{$a$};
\draw[rotate=70] (3.35,0) node{$b$};
\clip (0,0) circle [radius=3cm];
\draw[very thick,rotate=45] (-3,0)--(3,0);
\draw[very thick, rotate=35] (3.708,0) circle [radius=2.18cm];
\draw[very thick, rotate=180] (4.24,0) circle [radius=3cm];
\end{tikzpicture}
\caption{This is the `Poincar\'e disk' model of the hyperbolic plane $\mathfrak h^2$ where geodesics are either straight lines through the origin or semi-circles perpendicular to the boundary circle $S^1$ (which is strictly speaking disjoint from $\mathfrak h^2$). They are `noncossing' if they do not meet in their interiors. For example, the geodesics $\widehat{ab}$ and $\widehat{xy}$ cross but neither crosses $\widehat{xz}$}
\label{Fig: hyperbolic plane}
\end{center}
\end{figure}

\subsection{Spaced-out clusters}\label{SOC}

We will construct the `spaced-out cluster category'  $\cC_{id}(Z_n)$ as the cluster category of a finite subset $Z_n$ of $S^1$ and explain its relation to the standard cluster categories of type $A_m$ proved Proposition \ref{spaced-out embedding}) and illustrated in Figure \ref{fig-spaced}.

\begin{defn}For $n\ge3$ we define the \emph{spaced-out cluster category} $\cC_{id}(Z_n)$ to be the cluster category of the finite cyclically ordered set $Z_n$ with the \underline{trivial automorphism} $\varphi=id$. As in Example \ref{eg: Zn}, we take the elements of $Z_n\subset S^1$ to be $x_k=[2k\pi/n]$ for $k=1,2,\cdots,n$. ($x_{k+1}=x_k^+$ is the successor of $x_k$ and $x_1=x_n^+$ is the successor of $x_n$.)
\end{defn}

\begin{rem} Let $\cC_{id}(Z_n)$ be the spaced-out cluster category.
\begin{enumerate}
\item The category $\cC_{id}(Z_n)$ is a full subcategory of the continuous cluster category $\cC_{cont}=\cC_{id}(S^1)$. To see this, consider the Frobenius categories $\cF_{id}(Z_n)\subset\cF_{id}(S^1)$. Then the injective envelope in $\cF_{id}(S^1)$ of each object of $\cF_{id}(Z_n)$ lies in $\cF_{id}(Z_n)$ and a morphism in $\cF_{id}(Z_n)$ factors through a projective-injective in $\cF_{id}(S^1)$ if and only if it factors through an projective-injective object of $\cF_{id}(Z_n)$. Thus the inclusion functor of $\cC_{id}(Z_n)$ into $\cC_{id}(S^1)$ is full and faithful.
\item The indecomposable objects of  $\cC_{id}(Z_n)$ are $E(x,y)$ where $x\neq y\in Z_n$.
\item $\cC_{id}(Z_n)$ has $\binom n2=\frac{n(n-1)}2$ indecomposable objects $E(x,y)$ (up to isomorphism).
\end{enumerate}
\end{rem}

\begin{defn}
Compatibility condition 
is given by the Ext-condition of Definition \ref{Ext} or, equivalently, by the Hom-condition of Lemma \ref{Hom}, (the same as for $\cC_{cont}=\cC_{id}(S^1)$ since, in both cases, Hom=Ext).
A \emph{cluster} is defined to be a maximal collection of pairwise compatible  objects in $\cC_{can}(Z_n)$. 
\end{defn}

\begin{rem} Let $\cC_{id}(Z_n)$ be the spaced-out cluster category.
\begin{enumerate} \item A pair of objects $E(x,y),E(x',y')$ are compatible
if and only if they are noncrossing (See \cite[Prop 4.1.3]{IT15b}.) In particular, the $n$ objects $E(x,x^+)$ are compatible with every other object in $\cC_{can}(Z_n)$.
\item $E(x,y)[1]=E(\varphi^{-1}(y),\varphi^{-1}(x))=E(y,x)\cong E(x,y)$ for all $x\neq y$ in $Z_n$ since $\varphi=id$
\item  $E(x,y)=0$ if and only if $x=y$.
\end{enumerate} 
\end{rem}

\begin{prop}
The clusters of $\cC_{id}(Z_n)$ are in bijection with triangulations of the regular $n$-gon with vertices being the elements of $Z_n$, and where an edge $\overline{xy}$ is in the triangulation if and only if $E(x,y)$ is a member of the cluster.
\end{prop}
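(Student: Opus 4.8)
The plan is to reduce the statement to the classical combinatorial fact that maximal collections of pairwise noncrossing chords of a convex $n$-gon are precisely its triangulations, and then transport this fact through the dictionary between indecomposable objects and chords.

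First I would fix the dictionary. By the preceding remarks, the indecomposable objects of $\cC_{id}(Z_n)$ are exactly the $E(x,y)$ with $x\neq y\in Z_n$, and since $E(x,y)\cong E(y,x)$, these are in bijection with the unordered chords $\overline{xy}$ of the regular $n$-gon whose vertices are the points of $Z_n$ (the sides being the chords $\overline{xx^+}$, the diagonals being the rest). The crucial input is the compatibility criterion recalled just above: $E(x,y)$ and $E(x',y')$ are compatible if and only if the chords $\overline{xy}$ and $\overline{x'y'}$ are noncrossing. Under this dictionary a cluster, being a maximal collection of pairwise compatible indecomposables, corresponds exactly to a maximal collection of pairwise noncrossing chords.

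Next I would establish the combinatorial fact. Since each side $\overline{xx^+}$ is noncrossing with every chord, any maximal noncrossing collection must contain all $n$ sides; equivalently, by the remark the objects $E(x,x^+)$ lie in every cluster. The chords in such a collection cut the $n$-gon into regions, and I claim maximality forces every region to be a triangle: if some region had four or more vertices on its boundary, one could draw a further chord inside it crossing none of the existing chords, contradicting maximality. Hence a maximal noncrossing collection is precisely the set of all sides together with a maximal family of noncrossing diagonals, i.e.\ a triangulation of the $n$-gon.

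Finally I would assemble the bijection. Sending a cluster $\mathcal T$ to the chord set $\{\overline{xy}\mid E(x,y)\in\mathcal T\}$ yields a triangulation by the two steps above, while sending a triangulation $T$ to $\{E(x,y)\mid \overline{xy}\in T\}$ yields a maximal pairwise compatible set, hence a cluster; these assignments are mutually inverse and by construction match membership of $\overline{xy}$ with membership of $E(x,y)$. The main obstacle is the maximality argument in the combinatorial step: one must check both that no region of a maximal noncrossing collection can be larger than a triangle and, conversely, that every triangulation is already maximal (adding any further chord creates a crossing), so that the two notions coincide exactly rather than one merely being contained in the other.
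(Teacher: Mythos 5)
Your proposal is correct and follows essentially the same route as the paper: identify clusters (maximal pairwise compatible sets) with maximal noncrossing chord collections via the compatibility criterion of Remark \ref{compatible/noncrossing}, and then invoke the classical fact that these are exactly the triangulations of the $n$-gon. The only difference is that you spell out the combinatorial step (maximality forces every region to be a triangle, and conversely every triangulation is maximal) which the paper's two-line proof leaves implicit.
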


\begin{proof}
The edges $\overline{xy}$ corresponding to the objects $E(x,y)$ of a cluster form a maximal set of noncrossing edges in the regular polygon. These give the triangulations of the $n$-gon.
\end{proof}

\begin{cor}
The clusters of $\cC_{id}(Z_n)$ form a cluster structure where the objects $E(x,x^+)$ are frozen variables which belong to every cluster and every object $E(x,y)\in \cC_{id}(Z_n)$ belongs to at least one cluster.
\end{cor}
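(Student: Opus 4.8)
The plan is to verify the five axioms of Definition \ref{Cluster structures} by transporting each, through the bijection of the previous Proposition, to the classical combinatorics of triangulations of the regular $n$-gon, while supplying the exchange triangles from the triangulated structure of $\cC_{id}(Z_n)$. First I would dispose of the two framing assertions. Every triangulation of the $n$-gon contains all $n$ boundary edges $\overline{x x^+}$, so each $E(x,x^+)$ lies in every cluster; these are the frozen variables, and since boundary edges cross nothing they are compatible with every object and are never mutated. Conversely, any single chord $\overline{xy}$ is noncrossing and extends to a full triangulation, so every indecomposable $E(x,y)$ occurs in at least one cluster.

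For axiom (1), recall that a non-boundary diagonal $d=\overline{xy}$ of a triangulation $\mathcal T$ is a side of exactly two triangles whose union is a quadrilateral $Q$ with vertices $x,a,y,b$ in cyclic order. Replacing $d$ by a noncrossing chord while keeping a triangulation forces the new chord to be the opposite diagonal $\overline{ab}$ of $Q$, so $T^*=E(a,b)$ is unique. Boundary edges bound a single triangle and admit no flip, which is exactly why they must be frozen. Axiom (5) is immediate, because compatibility depends only on the underlying chords and $E(x,y)\cong E(y,x)$, so replacing the variables of a cluster by isomorphic objects does not change the associated set of chords and hence preserves the cluster property.

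For axiom (2) I would produce the two exchange triangles explicitly. With $Q$ as above, the four sides $E(x,a),E(a,y),E(y,b),E(b,x)$ all lie in $\mathcal T\setminus T$, and the two triangulations of $Q$ furnish, via the pushout construction of Corollary \ref{triangulated}, distinguished triangles $T\to E(x,a)\dirsum E(y,b)\to T^*$ and $T^*\to E(a,y)\dirsum E(b,x)\to T$, the precise pairing of opposite sides being fixed by the counterclockwise orientation that governs the morphisms. The work here is to check that $E(x,a)\dirsum E(y,b)$ is a minimal left $\add(\mathcal T\setminus T)$-approximation of $T$ and $E(a,y)\dirsum E(b,x)$ a minimal right one. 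This I would verify by computing the relevant $\Hom$-spaces using the rotation description of morphisms together with the full and faithful embedding $\cC_{id}(Z_n)\subset\cC_{cont}$, confirming that no other object of $\mathcal T\setminus T$ maps to $T$ except through these sides.

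Finally, for axioms (3) and (4) I would identify $Q_{\mathcal T}$ with the standard quiver of the triangulation, whose arrows record pairs of chords of $\mathcal T$ sharing a common triangle, oriented consistently around each triangle; a short analysis of irreducible maps between chords shows this coincides with $\operatorname{rad}/\operatorname{rad}^2$ in $\add\mathcal T$. For $n\ge3$ two distinct chords lie in at most one common triangle and the orientation places at most one arrow between them, so there are no loops and no $2$-cycles, giving axiom (3); and inspecting how the flip of $d$ exchanges the two triangles on $d$ for the two on $\overline{ab}$ shows that $Q_{\mathcal T^*}$ arises from $Q_{\mathcal T}$ by the familiar type-$A$ flip, which is exactly Derksen-Weyman-Zelevinsky mutation at the vertex $d$, giving axiom (4). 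The main obstacle is axiom (2): proving that the two candidate triangles really are the minimal approximation triangles requires genuine $\Hom$-computations in $\cC_{id}(Z_n)$ rather than pure polygon combinatorics, whereas once these are in hand the remaining axioms reduce to standard triangulation-flip combinatorics.
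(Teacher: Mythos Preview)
Your approach is essentially the same as the paper's: both use the bijection with triangulations of the $n$-gon, identify the mutation with the diagonal flip in the quadrilateral bounded by the two triangles incident to $\overline{xy}$, and produce the exchange triangle from the short exact sequence in the Frobenius category. The paper's proof is in fact considerably terser than yours: it only writes down one of the two exchange triangles,
\[
0\to E(x,y)\to E(x,b)\oplus E(a,y)\to E(a,b)\to 0,
\]
asserts this gives the required left approximation, and leaves axioms (3), (4), (5) entirely implicit. Your treatment of axioms (3)--(5) via the standard triangulation-quiver combinatorics is correct and fills in what the paper omits.

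One small point: with $x,a,y,b$ in cyclic order the paper pairs the sides as $E(x,b)\oplus E(a,y)$ for the left approximation of $E(x,y)$, whereas you pair them as $E(x,a)\oplus E(y,b)$. Since morphisms in $\cC_{id}$ go by counterclockwise rotation and $\Sigma=\mathrm{id}$ here, the paper's pairing is the one that actually gives a map out of $E(x,y)$; your caveat that ``the precise pairing \dots\ being fixed by the counterclockwise orientation'' is exactly right, and once you carry out the $\Hom$-computation you flag as the main obstacle you will land on the paper's pairing. This is a bookkeeping detail, not a gap in the argument.
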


\begin{proof} Since $Z_n$ is finite, every object $E(x,y)$ in $\cC_{id}(Z_n)$ can be completed to a maximal compatible set which is a cluster by definition.

Given any nonfrozen object $E(x,y)$ in a cluster $\cT$, the corresponding edge $\overline{xy}$ belongs to exactly two triangles whose union is a quadrilateral. The quadrilateral has two diagonals: one is $\overline{xy}$ and the other $\overline{ab}$ corresponds to the unique object $E(a,b)$ which can replace $E(x,y)$ in the cluster $\cT$. After possibly switching $a,b$ the points $x,a,y,b$ will be in cyclic order around the circle $S^1$ and we have an exact sequence in the Frobenius category:
\[
	0\to E(x,y)\to E(x,b)\oplus E(a,y)\to E(a,b)\to 0
\]
which gives a distinguished triangle in $\cC_{id}(Z_n)$ making the middle term $E(x,b)\oplus E(a,y)$ a left add$(\cT\backslash E(x,y))$-approximation of $E(x,y)$ as required for the condition (2) for cluster structures in  Definition \ref{Cluster structures}.
\end{proof}

The name `spaced-out cluster category' comes from the fact that $\cC_{id}(Z_n)$ is embedded in the standard cluster category of $A_{2n-3}$ as a maximal collection of objects in the Auslander-Reiten quiver which are not connected by irreducible maps. (Figure \ref{fig-spaced} and Proposition \ref{spaced-out embedding}.)

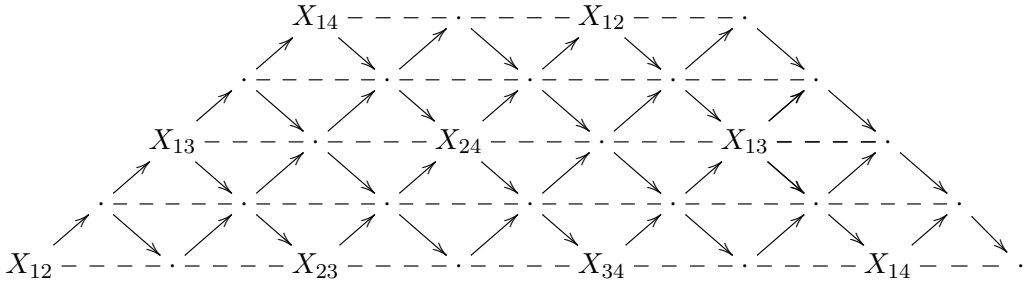
\begin{figure}[htbp]
\begin{center}
\[
\xymatrixrowsep{10pt}\xymatrixcolsep{10pt}
\xymatrix{
  & & & &X_{14}\ar[dr]\ar@{--}[rr]& &\cdot\ar[dr]\ar@{--}[rr]&&X_{12}\ar[dr]\ar@{--}[rr]&&\cdot\ar[dr]\\ 
  & & &\cdot\ar[ur]\ar@{--}[rr]\ar[dr]& &\cdot\ar[ur]\ar@{--}[rr]\ar[dr]&&\cdot\ar[ur]\ar@{--}[rr]\ar[dr]&&\cdot\ar[ur]\ar@{--}[rr]\ar[dr]&&\cdot\ar[dr]\\ 
 & & X_{13}\ar[ur]\ar@{--}[rr]\ar[dr]& &\cdot\ar[ur]\ar@{--}[rr]\ar[dr]&&X_{24}\ar[ur]\ar@{--}[rr]\ar[dr]&&\cdot\ar[ur]\ar@{--}[rr]\ar[dr]&&X_{13}\ar[ur]\ar@{--}[rr]\ar[dr]\ar[ur]\ar@{--}[rr]\ar[dr]&&\cdot\ar[dr]\\ 
 & \cdot\ar[ur]\ar@{--}[rr]\ar[dr] & &\cdot\ar[ur]\ar@{--}[rr]\ar[dr]&&\cdot\ar[ur]\ar@{--}[rr]\ar[dr]&&\cdot\ar[ur]\ar@{--}[rr]\ar[dr]&&\cdot\ar[ur]\ar@{--}[rr]\ar[dr]&&\cdot\ar[ur]\ar@{--}[rr]\ar[dr]&&\cdot\ar[dr]\\ 
 X_{12}\ar[ur]\ar@{--}[rr]& &\cdot\ar[ur]\ar@{--}[rr]&&X_{23}\ar[ur]\ar@{--}[rr]&&\cdot\ar[ur]\ar@{--}[rr]&&X_{34}\ar[ur]\ar@{--}[rr]&&\cdot\ar[ur]\ar@{--}[rr]&&X_{14}\ar[ur]\ar@{--}[rr]&&\ \cdot\ \\ 
	}
\]
\caption{The embedding $J:\cC_{id}(Z_4)\into\cC_\varphi(Z_{8})$ of the spaced-out cluster category of $Z_4$ into the cluster category of type $A_5$. Here $E(x_{2i},x_{2j})=JE(x_i,x_j)$ is denoted $X_{ij}$. $J$ does not commute with shift $[1]$ since $J\cC_{id}(Z_4)\cap J\cC_{id}(Z_4)[1]=0$}
\label{fig-spaced}
\end{center}
\end{figure}

\begin{prop}\label{spaced-out embedding}
There is an embedding $J:\cC_{id}(Z_n)\to \cC_\varphi(Z_{2n})$ given on objects by 
\[
	JE(x_i,x_j)=E(x_{2i},x_{2j}).
\]
On morphism, $J$ is linear and takes basic morphisms to basic morphisms. Furthermore, $J$ sends clusters to clusters.
\end{prop}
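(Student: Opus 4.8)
The plan is to define $J$ on objects by the given formula and on each $\Hom$-space by sending its basic generator to the basic generator of the target, and then to check three things: that $J$ is a well-defined functor landing among the nonzero objects of $\cC_\varphi(Z_{2n})$, that it is full and faithful, and that it carries clusters to clusters. The object statement is immediate. Writing the vertices of $Z_{2n}$ as $[k\pi/n]$, the doubling map $x_i\mapsto x_{2i}$ identifies $Z_n$ with the even vertices of $Z_{2n}$ and preserves the cyclic order; thus $i\ne j$ forces $x_{2i}$ and $x_{2j}$ to be non-adjacent in $Z_{2n}$, so $E(x_{2i},x_{2j})$ is never a side of the $2n$-gon and hence is a nonzero indecomposable of $\cC_\varphi(Z_{2n})$ (the sides are the projective-injectives that vanish in the stable category, cf. Example \ref{eg: C(Zn)}). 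Distinct chords of the $n$-gon clearly go to distinct chords, so $J$ is injective on isomorphism classes.

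Next I would record why $J$ respects composition. Both $\cC_{id}(Z_n)$ and $\cC_\varphi(Z_{2n})$ are assembled from the restriction to $Z_n$, respectively $Z_{2n}$, of the single ``completing the circle'' cocycle $c$ of Example \ref{cocycle10} (Remark \ref{subset}). Since $x_{2i}\in Z_{2n}$ and $x_i\in Z_n$ denote the same point of $S^1$, one has $c(x_{2i},x_{2j},x_{2k})=c(x_i,x_j,x_k)$ for all $i,j,k$. Hence the structure constants $t^{c(x,y,z)}$ of the two $t^{\NN}$-categories agree on the image, the composite of two basic morphisms maps to the composite of their images, and $J$ is a linear functor taking basic morphisms to basic morphisms by construction.

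The heart of the argument, and the step I expect to be the main obstacle, is full faithfulness, i.e. the equality
\[
\dim_k\Hom_{\cC_{id}(Z_n)}(E(x_i,x_j),E(x_k,x_l))=\dim_k\Hom_{\cC_\varphi(Z_{2n})}(E(x_{2i},x_{2j}),E(x_{2k},x_{2l})).
\]
In both categories the relevant $\Hom$-spaces are at most one-dimensional, and whether a given one is nonzero is controlled by the ``counterclockwise rotation of endpoints'' description of morphisms underlying Remark \ref{compatible/noncrossing} and the $\Hom$-computations of \cite{IT15a} and \cite{IT15b}: a nonzero map $E(a,b)\to E(c,d)$ exists exactly when $\overline{cd}$ is reached from $\overline{ab}$ by rotating endpoints in the allowed direction without the sweep collapsing the chord onto a zero object. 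The delicacy is that the zero objects and the automorphisms differ---degenerate chords $E(x,x)$ with $\varphi=id$ on one side, the sides $E(x,\varphi x)$ with rotation by one vertex on the other. I would resolve this by matching the collapsing events directly: a rotation between even-endpoint chords in $\cC_\varphi(Z_{2n})$ is killed exactly when its sweep brings the moving endpoint to an odd vertex adjacent to the stationary (even) endpoint, producing a side $E(\text{odd},\text{even})$; under the identification $x_i\leftrightarrow x_{2i}$ this is precisely the moment the corresponding sweep in $Z_n$ brings the moving endpoint onto the stationary one, collapsing the chord to the zero object $E(x,x)$. Since the two vertices adjacent to any even vertex are odd, no side of the $2n$-gon can intervene anywhere else along a sweep between even-endpoint chords, so the killing criteria coincide and the displayed equality follows. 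This is the picture of Figure \ref{fig-spaced}: the images $X_{ij}$ form a spaced-out sublattice of the Auslander-Reiten quiver of $A_{2n-3}$ in which each irreducible map of $\cC_{id}(Z_n)$ becomes a path of length two.

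Finally, for the cluster statement I would argue geometrically. By the identification of the clusters of $\cC_{id}(Z_n)$ with triangulations of the $n$-gon, a cluster $\cT$ consists of the $2n-3$ edges (the $n$ frozen sides together with $n-3$ diagonals) of such a triangulation. Doubling preserves cyclic order, so the $2n-3$ image chords are pairwise noncrossing in the $2n$-gon. Each doubled side $\overline{x_{2i}x_{2i+2}}$ cuts off the odd vertex $x_{2i+1}$ as an ear triangle $\{x_{2i},x_{2i+1},x_{2i+2}\}$; the $n$ ears account for all odd vertices and leave the central $n$-gon on the even vertices, which the images of the diagonals of $\cT$ triangulate. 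Counting triangles gives $n+(n-2)=2n-2$, exactly the number in a triangulation of the $2n$-gon, so $J\cT$ is a maximal noncrossing family of nonzero objects. Since $\cC_\varphi(Z_{2n})$ is the cluster category of type $A_{2n-3}$ (Example \ref{eg: C(Zn)}), whose clusters are exactly the triangulations of the $2n$-gon, $J\cT$ is a cluster. Thus $J$ carries clusters to clusters, completing the plan.
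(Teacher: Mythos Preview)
Your argument is correct, but you take a different route from the paper. The paper never works directly in the stable categories: it notes that $\cF_\varphi(Z_n)\subset\cF(Z_n)$ differs only by projective-injectives, so passing to the stable category gives $\cC_{id}(Z_n)$, and then it identifies $\cF_\varphi(Z_n)$ with the full subcategory of $\cF_\varphi(Z_{2n})$ on the objects $E(x_{2i},x_{2j})$. This makes functoriality of $\widetilde J$ automatic (it is a full inclusion of Frobenius categories), and the entire content of full faithfulness reduces to the single check that a morphism in $\cF_\varphi(Z_n)$ factors through $E(x,x)\oplus E(y,y)$ (the injective envelope in $\cF(Z_n)$) if and only if it factors through $E(x,x^-)\oplus E(y^-,y)$ (the projective-injectives in $\cF_\varphi(Z_{2n})$), which comes down to ``for even integers, $x<y$ iff $x<y-1$''. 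Your sweep-and-collapse argument is the geometric shadow of exactly this factorization statement, and is correct, though the Frobenius-level formulation makes it cleaner and avoids having to justify the sweep heuristic as a characterization of stable $\Hom$.

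One thing your version does better: the paper's proof never addresses the ``clusters to clusters'' clause at all. Your triangle-counting argument (the $n$ doubled sides cut off $n$ ears at the odd vertices, and the remaining diagonals triangulate the inscribed even $n$-gon, giving $2n-2$ triangles and hence a maximal noncrossing set in the $2n$-gon) is a clean and complete proof of that clause, and is essentially the content of Figure~\ref{Figure99}.
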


\begin{proof}
We first construct an embedding on the level of the Frobenius categories $\cF_n:=\cF_\varphi(Z_n)$ and $\cF_{2n}:=\cF_\varphi(Z_{2n})$. The first category $\cF_n$ is a full subcategory of the Frobenius category $\cF(Z_n)$ whose stable category is $\cC_{id}(Z_n)$. The only objects in $\cF(Z_n)$ which are not in $\cF_n$ are the projective-injective objects $E(x,x)$. Therefore, the composition $\cF_n\into \cF(Z_n)\onto \cC_{id}(Z_n)$ is surjective.
Consider the following diagram where $\widetilde J:\cF_n\into \cF_{2n}$ is the inclusion functor of the full subcategory $\cF_\varphi(Z_n)$ of $\cF_\varphi(Z_{2n})$ consisting of the objects $E(x_{2i},x_{2j})$ where $i\neq j$.
\[
\xymatrix{
\cF_n=\cF_\varphi(Z_n)\ar[d]^{\widetilde J}\ar[r] & \cF(Z_n)\ar[r] &
	\cC_{id}(Z_n)\ar[d]^J\\
\cF_{2n}=\cF_\varphi(Z_{2n}) \ar[rr]& &
	\cC_\varphi(Z_{2n})
	}
\]
To prove the Proposition is suffices to prove two things:
\begin{enumerate}
\item The diagram commutes.
\item A morphism $f$ in $\cF_n$ goes to zero in $\cC_{id}(Z_n)$ ($\underline f=0$) if and only if the corresponding morphism $\widetilde J f$ in $\cF_{2n}$ goes to zero in $\cC_\varphi(Z_{2n})$, i.e., $\underline f=0\Leftrightarrow\underline {\widetilde Jf}=0$.
\end{enumerate}
In fact, it suffices to prove (2) since, in that case, there is a unique induced functor $J:\cC_{id}(Z_n)\to \cC_\varphi(Z_{2n})$ making the diagram commute. This statement follows from the fact that the functor $\cF_n\to \cC_{id}(Z_n)$ is a bijection on objects.

We prove (2) directly from the definition of the stable category. A morphism $f:E(x,y)\to E(x',y')$ in $\cF_n$ goes to zero in $\cC_{id}(Z_n)$ if and only if the image of $f$ in $\cF(Z_n)$ factors through the injective envelope $E(x,x)\oplus E(y,y)$ of $E(x,y)$ in $\cF(Z_n)$. But this is equivalent to $\widetilde f$ factoring through $E(x,x^-)\oplus E(y^-,y)$ (this is equivalent to the fact that, for even integers $x,y$, $x<y$ iff $x<y-1$) which is equivalent to $\widetilde J f$ mapping to zero in $\cC_\varphi(Z_{2n})$.

 Finally, any basic morphism in $\cC_{id}(Z_n)$ comes from a basic morphism in $\cF_n$ which maps to a basic morphism in $\cF_{2n}$ which maps to a basic morphism in $\cC_\varphi(Z_{2n})$. So, $J$ takes basic morphisms to basic morphisms.
\end{proof}

\begin{figure}[htbp]
\begin{center}
\begin{tikzpicture}[scale=.8]
\draw[thick] (0,0) circle [radius=3cm];
\draw[very thick,rotate=36] (2.85,0)--(3.15,0);
\draw[very thick,rotate=108] (2.85,0)--(3.15,0);
\draw[very thick,rotate=180] (2.85,0)--(3.15,0);
\draw[very thick,rotate=252] (2.85,0)--(3.15,0);
\draw[very thick,rotate=324] (2.85,0)--(3.15,0);
\clip (0,0) circle [radius=3cm];
\draw[very thick, rotate=36] (3.708,0) circle [radius=2.18cm];
\draw[very thick, rotate=108] (3.708,0) circle [radius=2.18cm];
\draw[very thick, rotate=180] (3.708,0) circle [radius=2.18cm];
\draw[very thick, rotate=252] (3.708,0) circle [radius=2.18cm];
\draw[very thick, rotate=324] (3.708,0) circle [radius=2.18cm];
\draw[very thick, rotate=72] (9.7,0) circle [radius=9.23cm];
\draw[very thick, rotate=-72] (9.7,0) circle [radius=9.23cm];
\end{tikzpicture}
\caption{A cluster $\cT$ in $\cC_{id}(Z_5)$ has 7 objects whose corresponding geodesics give an ideal triangulation of an ideal pentagon. Adding 5 points on the boundary we get a triangulation of the 10-gon representing a cluster $J\cT$ in $\cC_\varphi(Z_{10})$.}
\label{Figure99}
\end{center}
\end{figure}
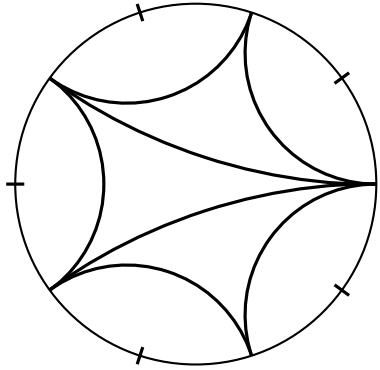

\section{Cluster structure on the cyclic subposets of $S^1$ with canonical automorphism $\varphi$}  \label{phi=canonical}

In this section we consider  admissible discrete subsets $Z$ of $S^1$ and canonical automorphisms $\varphi$ which are used to define triangulated categories $\cC_{\varphi}(Z)$ with cluster structure (see Theorem \ref{Ccanonical}). We address the question of which clusters in these cluster categories give a topological triangulation of the 2-disk or part of the 2-disk. The first main result (Lemma \ref{1-1}) is that a cluster gives a topological triangulation of an open subset of $D^2$ if an only if it is locally finite. In section \ref{CcanEx} we construct several categories with cluster structures depending on the subsets $Z\subset S^1$ {\color{black}and we give a complete description of the locally finite clusters in these examples.} In Section \ref{triangulation} we describe `triangulation clusters', i.e. clusters which define triangulation of the unit disk $D^2=\{(x,y)\in\RR^2\,:\,x^2+y^2\le1\}$ after the limit points $L(Z)$ of $Z$ are removed (Theorem \ref{triangulationtheorem}).
In Section \ref{non-triangulation} we consider {\color{black}the locally finite} clusters which {\color{black}are not triangulation clusters, i.e.,} do not define triangulation on $D^2\backslash L(Z)$ and we associate {\color{black}new kinds of} cyclic posets and spaces, called {\color{black}`cactus cyclic posets' and} `cactus spaces' in order for cluster to become triangulation cluster (Figure \ref{fig04}).

\subsection{Admissible discrete subsets of the circle $S^1$ and canonical automorphism}\label{CcanEx}
Let $Z$ be a subset of $S^1$, let $\pi: \mathbb R\to S^1$ be the covering and let $\tilde Z:= \pi^{-1}Z$. Let $p:\tilde Z\to Z$ be the restricted projection map.

\begin{defn}\label{def: admissible subsets of S1}
A subset $Z$ of $S^1$ will be called \emph{admissible discrete subset} if it satisfies:
\begin{enumerate}
\item $Z$ is \emph{discrete} in the sense that every element of $Z$ has an open neighborhood $U\subseteq S^1$ which contains no other element of $Z$.
\item $Z$ satisfies the following \emph{two-sided limit condition}. Every point $x$ in $S^1$ which is a limit point of $Z$ is a limit from both sides, i.e., any lifting $\tilde x\in\RR$ of $x$ is both an ascending and descending limit of $\tilde Z$.
\item $Z$ has at least four elements.
\end{enumerate}
\end{defn}

In order to define canonical admissible automorphism, we need to
 define successors and predecessors of elements in a cyclic poset. Let $x\in Z\subset S^1$. Choose a lifting $\tilde x$ of $x$ to $\RR$ and let $\tilde x^+$ be the infimum of all elements of $\tilde Z$ which are greater than $\tilde x$. Since $\tilde x^+$ is not an ascending limit of elements of $\tilde Z$, it cannot be a descending limit by the two-sided limit condition. Therefore, $\tilde x^+\in\tilde Z$. Similarly let $\tilde x^-\in\tilde Z$ be the supremum of all elements of $\tilde Z$ less than $\tilde x$. 
If $x\in Z$ then $\tilde x$ is the only element of $\tilde Z$ which lies between $\tilde x^+$ and $\tilde x^-$.

\begin{defn} Let $z$ be an element of an admissible subset $Z$ of $S^1$. We define the \emph{successor} $z^+$ and \emph{predecessor} $z^-$ of  $z\in Z$ to be the images in $S^1$ of $\tilde z^+,\tilde z^-$ for any lifting $\tilde z$ of $z$ in $\RR$. The successor and predecessor are well defined, independent of the lifting.
\end{defn}

\begin{defn}\label{def: canonical admissible automorphism}
Given an admissible subset $Z$ of the circle $S^1$, the \emph{canonical admissible automorphism} of $Z$ is given by $\varphi(z)= z^+$ and $\tilde\varphi(\tilde z)=\tilde z^+$. 
\end{defn}

The following theorem, which was proved in \cite{IT15b}, supplies collections of many categories which have  cluster structures:

\begin{thm}[Lemma 2.4.4, \cite{IT15a}]\label{Ccanonical}Let $\mathcal C_{can}(X)$ be the stable category associated to a cyclic poset $(X,c)_{\varphi}$, where $X$ is an admissible discrete subset of $S^1$ and $\varphi$ is the canonical admissible automorphism. Then the triangulated category $\mathcal C_{can}(X)$ has a cluster structure.
\end{thm}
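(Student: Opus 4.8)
The plan is to realize clusters geometrically as triangulations of the disk and then verify the five axioms of Definition \ref{Cluster structures} one at a time, importing the combinatorial flip/mutation dictionary from the theory of triangulated surfaces. First I would set up the geometric correspondence: by Remark \ref{compatible/noncrossing} (and the underlying Hom computation), two indecomposables $E(x,y)$ and $E(x',y')$ of $\cC_{can}(X)$ are compatible precisely when the corresponding arcs joining the points of $X\subset S^1$ do not cross in the interior of the disk. I would then \emph{define} a cluster to be a maximal collection of pairwise compatible (hence pairwise noncrossing) indecomposables, with the boundary arcs $E(z,\varphi z)$ — which are projective-injective in $\cF_\varphi(X)$ and hence zero in $\cC_{can}(X)$ — playing the role of frozen data. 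A maximal noncrossing family of arcs cuts the disk into triangles, so clusters correspond to (ideal) triangulations whose vertex set is $X$.

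Next, for axioms (1) and (2) I would argue locally. A nonfrozen arc $E(x,y)$ in a triangulation $\cT$ is the diagonal of a unique quadrilateral formed by its two adjacent triangles; the opposite diagonal gives the unique object $E(a,b)$, so setting $T^*=E(a,b)$ establishes axiom (1). Ordering the vertices cyclically as $x,a,y,b$, the Frobenius category $\cF_\varphi(X)$ carries the short exact sequence
\[
0\to E(x,y)\to E(x,b)\oplus E(a,y)\to E(a,b)\to 0
\]
exactly as in the $n$-gon computation, together with its mirror image. Passing to the stable category, these yield the two distinguished triangles of axiom (2), and I would check that $E(x,b)\oplus E(a,y)$ is a minimal $\add(\cT\backslash T)$-approximation by showing that every map from a cluster object into $T$ routes through one of the two adjacent triangle-sides. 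This is the step where the canonical-automorphism Hom computation (rather than the $\varphi=\mathrm{id}$ one) must be used, since here $\Sigma E(x,y)=E(\varphi^{-1}y,\varphi^{-1}x)\neq E(x,y)$, so $\Ext^1\neq\Hom$ and the compatibility/minimality bookkeeping differs from the identity case.

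For axioms (3) and (4) I would attach to each triangulation $\cT$ its quiver $Q_\cT$ in the usual surface-combinatorics way: vertices are the nonfrozen arcs, with an arrow for each pair of arcs bounding a common triangle, oriented by the triangle's orientation. One checks directly that such a quiver has no loops and no $2$-cycles (axiom (3)), and that a diagonal flip changes $Q_\cT$ exactly by a Derksen--Weyman--Zelevinsky mutation at the corresponding vertex (axiom (4)); both are local statements about a single quadrilateral, so they transfer verbatim from the finite polygon case already treated in Section \ref{SOC}. Axiom (5) is immediate, since all defining conditions depend only on isomorphism classes.

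The main obstacle I expect is uniformity in the presence of limit points of $X$: I must ensure that every nonfrozen arc genuinely bounds two triangles (so the flip exists and is unique) and that the approximation objects $B,B'$ remain honest finite direct sums even when $\cT$ is not locally finite. This is precisely where the admissibility hypotheses of Definition \ref{def: admissible subsets of S1} — discreteness together with the two-sided limit condition — do the work, guaranteeing via Definition \ref{def: canonical admissible automorphism} that successors and predecessors exist in $X$ and that no arc degenerates toward a limit point. Once this local finiteness of each flip is secured, the five axioms follow from the quadrilateral analysis, and the theorem is proved.
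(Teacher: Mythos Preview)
The paper does not prove this theorem at all: it is imported verbatim from \cite{IT15a} (as the label ``Lemma 2.4.4, \cite{IT15a}'' indicates), so there is no in-paper argument to compare against. Your outline is the standard one and almost certainly matches the cited source: identify indecomposables with internal arcs, clusters with maximal noncrossing families, mutation with diagonal flip, and read off the approximation triangles from the Frobenius short exact sequence attached to the quadrilateral.

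Where your sketch is thin is the last paragraph. You correctly isolate the only nontrivial point --- that every nonfrozen arc in a \emph{maximal} noncrossing family bounds exactly one triangle on each side, so that the flip exists and the approximation object is a finite sum --- but ``admissibility does the work'' is not yet an argument. Discreteness and the two-sided limit condition guarantee that $z^+$ and $z^-$ exist for each $z$, but they do \emph{not} by themselves prevent the following scenario: the fan of arcs of $\cT$ at $x$ on one side of $E(x,y)$ has other endpoints $a_1<a_2<\cdots$ accumulating at a limit point $\ell\in(x,y)$, so that there is no ``last'' neighbour of $x$ and hence no adjacent triangle. What you actually need to prove is that if $E(x,a_i)\in\cT$ with $a_i\uparrow\ell$, then maximality of $\cT$ together with the noncrossing constraint forces some arc of $\cT$ to jump past $\ell$, giving a bound on the fan and hence a genuine triangle. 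Concretely: show that the set $P=\{a:E(x,a)\in\cT\}\cup\{x^+\}$ and the analogous set $Q$ at $y$ satisfy $p\le q$ for all $p\in P$, $q\in Q$, and then argue that $\sup P$ is attained (using that any arc crossing $E(a_i,y)$ must, by noncrossing with $E(x,y)$ and $E(x,a_i)$, be of the form $E(x,a')$ with $a'>a_i$). This is an honest combinatorial lemma; once it is in place, your axioms (1)--(5) follow exactly as you describe.
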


We now give an example of a discrete subset of $S^1$ which is not admissible. 

\begin{eg} The following is an example of a discrete subset of $S^1$ which is not admissible since it does not satisfy  the limit condition: 
\[Z=\left\{\frac{\pi}n\,:\, n\text{ is a positive integer }\right\}\subset S^1\]
The limit point $0$ is a descending limit but not an ascending limit of $\tilde Z$. Every element $\frac{\pi}n$ has a predecessor $\frac{\pi}{n+1}$ and every element $\frac{\pi}{n}$ for $n\ge2$ has a successor $\frac{\pi}{n-1}$ except for the point ${\pi}\in Z$ which has no successor since the infimum of the set of all $x>\pi\in \tilde Z$ is $2\pi$ which is not contained in $\tilde Z$. Condition (2) is violated because $2\pi$ is a descending limit of $\tilde Z$ but not an ascending limit.
\end{eg}

We now recall a general construction of a new cyclic poset $\mathcal P*\mathcal O$ which is created out of a pair of a cyclic poset $\mathcal P$ and ordered set $O$ (Definition 1.1.14 \cite{IT15b}). 

\begin{defn} Let $\mathcal P$ be a cyclic poset and let $O$ be an ordered set. We give the cyclic poset structure on $\cP\ast \mathcal O$ by defining its covering poset to be the Cartesian product $\widetilde \cP\times \mathcal O$ with lexicographic order. Let $\sigma(x,a):=(\sigma(x),a)$ where $\sigma:\widetilde{\mathcal P}\to \widetilde{\mathcal P}$ is the defining automorphism of $\widetilde{\cP}$ Def.\ref{cover}(2). This is a poset automorphism and, for any pair of elements $(x,a),(y,b)$ in $\widetilde \cP\times \mathcal O$ there is an integer $n$ so that $x< \sigma(y)$ which makes $(x,a)<\sigma(y,b)$. So, we get a cyclic poset.

\end{defn}

Of particular interest for this paper will be the case when $\mathcal P$ is a cyclic poset obtained from a discrete admissible subset $Z$ of $S^1$ and $O$ is ordered set $Z_{\infty}$ which is isomorphic to $\mathbb Z$. The following example is a convenient description of such a cyclic poset together with an embedding into $S^1$.

\begin{eg}\label{examples of Z}
Let $Z\subset S^1$ be any discrete admissible set satisfying the conditions of the Definition \ref{def: admissible subsets of S1}. Then we can form another set 
\[Z(Z_{\infty}):=\bigcup_{z\in Z} \left\{\frac{z^++z}2+\frac{z^+-z}\pi \arctan n\,:\,n\in\ZZ\right\}\]
which is given by inserting a copy of $Z_\infty$ between any two consecutive elements of $Z$, then deleting the original set $Z$. Then $Z_\infty(Z)$ also satisfying the definition, and the closure of $Z$ in $S^1$ is the set of limit points of the set $Z_\infty(Z)$.
\end{eg}

\begin{rem} \label{L(Zinfty)} Let $Z\subset S^1$ be a discrete admissible subset.
\begin{enumerate}
\item We use the notation $Z(Z_{\infty})$ for $Z*Z_{\infty}$ since this construction in this case inserts a copy of $Z_{\infty}$ between any two adjacent points in $Z$, i.e. between $z$ and $z^+$, in such a way that the points of $Z$ all become limit points. 
\item Let $Z\subset S^1$ be a discrete admissible subset. Suppose that $L$, the set of limit points of $Z$ is finite. Then $Z=L(Z_{\infty})$. 
\end{enumerate}
\end{rem}

\begin{eg}\label{examples of Z2(Zinfty)}
As a special case of Example \ref{examples of Z}, let $Z_2=\{0,\pi\}$. Then \[Z_2(Z_{\infty})=\left\{\arctan n+\frac\pi2\,:\, n\in\ZZ\right\}\cup \left\{\arctan n-\frac\pi2\,:\, n\in\ZZ\right\}\]which has two limit points $\{0,\pi\}$. Let $\mathcal C_{can}(Z_2(Z_{\infty}))$ be the associated category with cluster structure as in Theorem \ref{Ccanonical}. This example is very similar to the one in \cite{LP}.
\end{eg}

We will label the points on $S^1$ in Figure \ref{FigureZ2} by: \\
$a^+_i \rightsquigarrow (\arctan i+\frac\pi2)$ and $a^-_i \rightsquigarrow (\arctan i-\frac\pi2)$. \\

We will label the objects in $\mathcal C_{can}(Z_2(Z_{\infty}))$ as follows:\\
 $E_{ij}=E(a^+_i,a^+_j)$ (these objects correspond to the arcs in the upper half of $D^2$),\\
$F_{ij}=E(a^-_i,a^-_j)$ (these correspond to the arcs in the lower half of $D^2$),\\
$G_{ij}=E(a^+_i,a^-_j)$ (correspond to the arcs between the upper half and lower half of $D^2$).\\
With this notation, we will label in Figure \ref{FigureZ2} certain objects and their corresponding arcs all of which are compatible to each other and hence form a subset of a cluster: \\$E_{-20}, E_{02}, E_{03}, E_{35}, F_{-20}, F_{02}, F_{04}, F_{24}, G_{3,-2}, G_{30}, G_{00}, G_{04}, G_{24}$. \\

\begin{figure}[htbp]
\begin{center}
\begin{tikzpicture}[scale=.8]
\draw[thick] (0,0) circle [radius=3cm];
\draw[thick] (3,0) node{\huge$\ast$};
\draw[thick] (-3,0) node{\huge$\ast$};
\foreach \x in {90,45,14.04,6.34,3.58,2.29}\draw[thick,rotate=\x] (2.9,0)--(3.1,0) (-2.9,0)--(-3.1,0) ;
\foreach \x in {45,14.04,6.34,3.58,2.29}\draw[thick,rotate=-\x] (2.9,0)--(3.1,0)  (-2.9,0)--(-3.1,0) ;
\draw[rotate=90] (3.4,0) node{\tiny$a_0^+$};
\draw[rotate=90] (-3.4,0) node{\tiny$a_0^-$};
\draw[rotate=45] (3.4,0) node{\tiny$a_{-1}^+$};
\draw[rotate=14] (3.5,0) node{\tiny$a_{-2}^+$};
\draw[rotate=6.34] (3.5,0) node{\tiny$a_{-3}^+$};
\draw[rotate=-6.34] (3.4,0) node{\tiny$a_{3}^-$};
\draw[rotate=6.34] (-3.5,0) node{\tiny$a_{-3}^-$};
\draw[rotate=-6.34] (-3.4,0) node{\tiny$a_{3}^+$};
\draw[rotate=-45] (3.4,0) node{\tiny$a_1^-$};
\draw[rotate=-14] (3.4,0) node{\tiny$a_2^-$};
\draw[rotate=45] (-3.4,0) node{\tiny$a_{-1}^-$};
\draw[rotate=14] (-3.5,0) node{\tiny$a_{-2}^-$};
\draw[rotate=-45] (-3.4,0) node{\tiny$a_1^+$};
\draw[rotate=-14] (-3.4,0) node{\tiny$a_2^+$};
\draw[->] (-4.5,3) .. controls (-1.3,4) and (-1.3,2) ..(-.6,1.5);
\draw (-4.5,3) node[left]{\small$E_{0,2}=E(a_0^+,a_2^+)$};
\draw[->] (-4.5,2) .. controls (-2.3,2.5) and (-2,1) ..(-1.8,.5);
\draw (-4.5,2) node[left]{\small$E_{0,3}=E(a_0^+,a_3^+)$};
\draw[->] (-4.5,-2) .. controls (-2,-1.5) and (-1.8,-.4) ..(-2.5,-.2);
\draw (-4.5,-2) node[left]{\small$G_{0,-2}=E(a_0^+,a_{-2}^-)$};
\draw[->] (-4.5,-3) .. controls (-2.3,-2) and (-1.8,-.8) ..(-1.3,-.35);
\draw (-4.5,-3)node[left]{\small$G_{3,0}=E(a_3^+,a_0^-)$};
\draw[->] (4.5,-2) .. controls (2.3,-2.5) and (2,-1.5) ..(2,-.75);
\draw (4.5,-2)node[right]{\small$F_{0,2}=E(a_0^-,a_2^-)$};
\draw[->] (4.5,-1) .. controls (2.3,-2) and (2,-1) ..(2.7,-.5);
\draw (4.5,-1) node[right]{\small$F_{2,4}=E(a_2^-,a_4^-)$};
\clip (0,0) circle [radius=3cm];
\draw[very thick] (0,-3)--(0,3);
\lcluster{-22.5}{7.84}{7.24}; 
\lncluster{-22.5}{7.84}{7.24}; 
\lcluster{22.5}{7.84}{7.24}; 
\lncluster{22.5}{7.84}{7.24}; 
\lncluster{-37.98}{4.87}{3.84}; 
\cluster{5.23}{3.04}{.46}; 
\cluster{43.21}{4.38}{3.19}; 
\ncluster{41.83}{4.5}{3.35}; 
\ncluster{3.85}{3.05}{.54}; 
\rcluster{52.02}{3.81}{2.34}; 
\bcluster{-52.02}{3.81}{2.34}; 
\bncluster{52.02}{3.81}{2.34}; 
\rncluster{-52.02}{3.81}{2.34}; 
\rncluster{-48.17}{4.03}{2.69}; 
\bcluster{-8.81}{3.01}{.27}; 
\bcluster{-46.79}{4.12}{2.82}; 
\end{tikzpicture}
\caption{Some compatible objects in $\mathcal C_{can}(Z_2(Z_{\infty}))$}
\label{FigureZ2}
\end{center}
\end{figure}
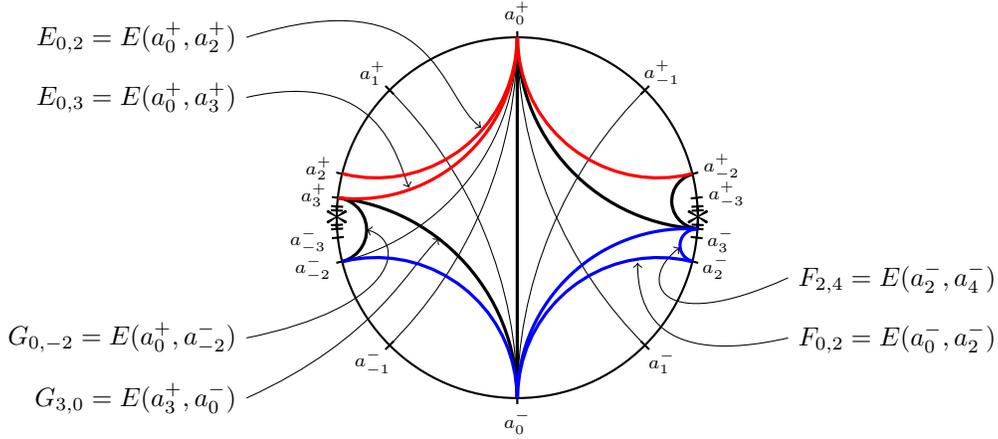

The Auslander-Reiten quiver of $\mathcal C_{can}(Z_2(Z_{\infty})$ is shown in Figure \ref{AR}. The quiver has three components, two of type $\ZZ A_\infty$: one containing all $\{E_{ij}\}$ objects, one containing all $\{F_{ij}\}$ objects and one of type $\ZZ A_\infty^\infty$ which contains all $\{G_{ij}\}$ objects. Of special importance will be collections of objects which appear on `complete zig-zag's which we now define.

\begin{defn} A sequence of indecomposable objects $\{ \dots , X_{-1}, X_0, X_1, \dots ,X_i, \dots\}$ forms a \emph{zig-zag} sequence if each pair $X_i, X_{i+1}$ is connected by an irreducible map in either direction. A \emph{complete zig-zag} sequence is a maximal such sequence.
\end{defn}
\begin{rem} \label{rem} Let $Z=Z_2(Z_{\infty})$ and let $\mathcal C_{can}(Z)$  be the triangulated category of the Theorem \ref{Ccanonical}. Here we describe some special clusters which will be exactly the clusters that define topological triangulations of the  $D^2\backslash L(Z)$, where $L(Z)$ is the set of limit points of $Z$ (in this example we have $L(Z)=\{0,\pi\}$) ('triangulation clusters' in Section \ref{triangulation}).
\begin{enumerate}
\item Particularly nice clusters are `complete zig-zag's in the component of $\{G_{ij}\}$. 
\item If $\mathcal T$ is a `complete zig-zag' cluster in  the component $\{G_{ij}\}$ and if  $\mathcal T$ is mutated at a corner object of the `zig-zag', then the new cluster is still a complete `zig-zag' cluster in  the component $\{G_{ij}\}$. 
\item If $\mathcal T$ is a `complete zig-zag' cluster in  the component $\{G_{ij}\}$ and if  $\mathcal T$ is mutated at an object on the positive slope in the AR-quiver, then the new object  is  in  the component $\{E_{ij}\}$. 
\item If $\mathcal T$ is a `complete zig-zag' cluster in  the component $\{G_{ij}\}$ and if  $\mathcal T$ is mutated at an object on the negative slope in the AR-quiver, then the new object  is  in  the component $\{F_{ij}\}$. 

\end{enumerate}

\end{rem}

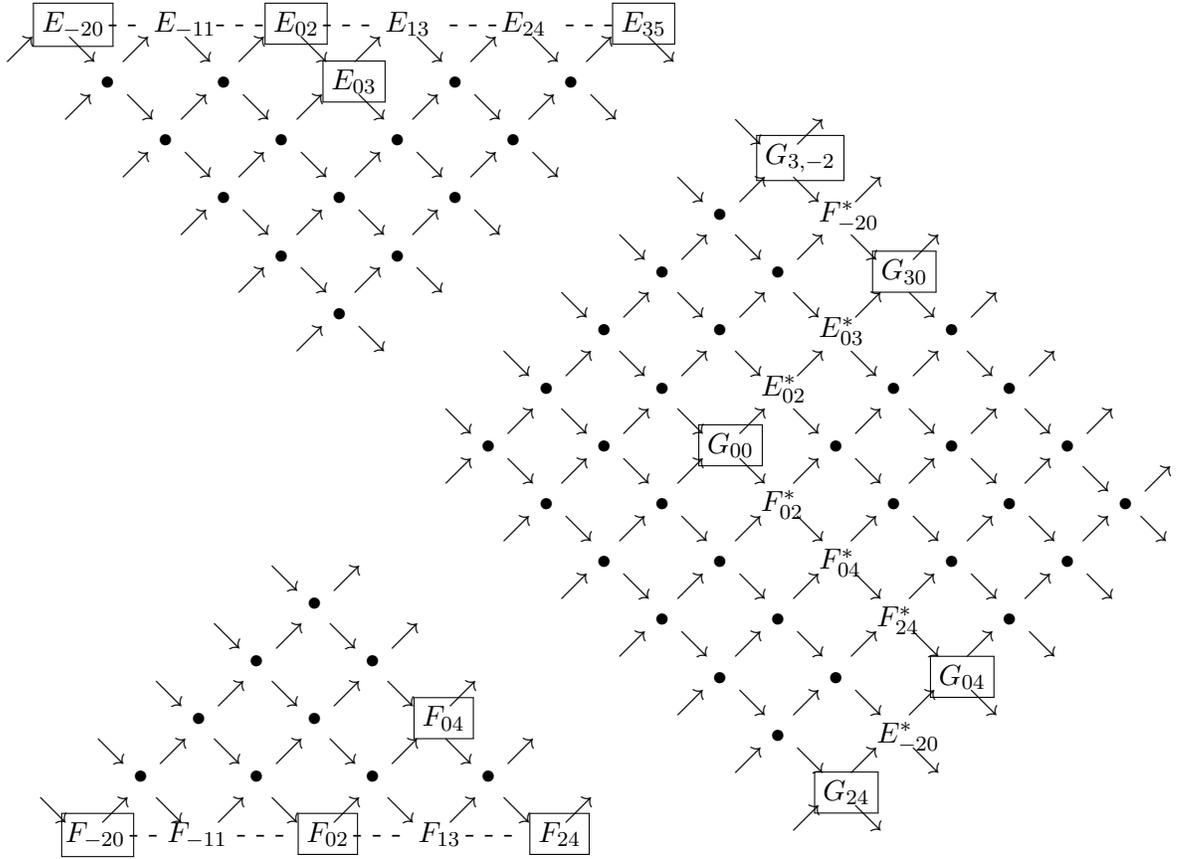
\begin{figure}[htbp]
\begin{center}
%
{
\setlength{\unitlength}{2.2cm}
{\mbox{
\begin{picture}(6,5)
      \thicklines
  \put(0.3,.3){ 
  %
  %
  \put(-.5,-0.1){
\put(0,4.7){
\put(0,0){\arUD{\!\boxed{E_{-20}}\!\text{- - }}}
\put(0.7,0){\arUD{{E_{-11}}\text{- - -}}}
\put(1.4,0){\arUD{\!\boxed{E_{02}}\!\text{- - -}}}
\put(2.1,0){\arUD{E_{13}\ \ \text{- - -}}}
\put(2.8,0){\arUD{E_{24}\ \ \text{- - -}}}
\put(3.5,0){\arUD{\!\boxed{E_{35}}}}
}
\put(.35,4.35){  \put(0,0){\arUD{\,\bullet}}
  \put(0.7,0){\arUD{\,\bullet}}
  \put(1.4,0){\arUD{\!\boxed{E_{03}}}}
  \put(2.1,0){\arUD{\,\bullet}}
  \put(2.8,0){\arUD{\,\bullet}}
  }
  \put(.7,4){
  \put(0,0){\arUD{\,\bullet}}
\put(0.7,0){\arUD{\,\bullet}}
\put(1.4,0){\arUD{\,\bullet}}
\put(2.1,0){\arUD{\,\bullet}}
  }
  \put(1.05,3.65){
  \put(0,0){\arUD{\,\bullet}}
\put(0.7,0){\arUD{\,\bullet}}
\put(1.4,0){\arUD{\,\bullet}}
  }
  \put(1.4,3.3){
  \put(0,0){\arUD{\,\bullet}}
\put(0.7,0){\arUD{\,\bullet}}
}
  \put(1.75,2.95){
  \put(0,0){\arUD{\,\bullet}}
}
}
%
%
%
  \put(2.15,0.35){
    \put(-.35,1.65){
  \put(4.2,0){\arDU{\,\bullet}}
  }
  \put(0,2){
  \put(0,0){\arDU{\,\bullet}}
  \put(0.7,0){\arDU{\,\bullet}}
  \put(1.4,0){\arDU{\!\!\boxed{G_{00}}}}
  \put(2.1,0){\arDU{\,\bullet}}
  \put(2.8,0){\arDU{\,\bullet}}
  \put(3.5,0){\arDU{\,\bullet}}
  }
  \put(0.35,2.35){
  \put(0,0){\arDU{\,\bullet}}
  \put(0.7,0){\arDU{\,\bullet}}
  \put(1.4,0){\arDU{\!E_{02}^\ast}}
  \put(2.1,0){\arDU{\,\bullet}}
  \put(2.8,0){\arDU{\,\bullet}}
  }
  \put(0.7,2.7){
  \put(0,0){\arDU{\,\bullet}}
  \put(0.7,0){\arDU{\,\bullet}}
  \put(1.4,0){\arDU{\!E_{03}^\ast}}
  \put(2.1,0){\arDU{\,\bullet}}
  }
  \put(1.05,3.05){
  \put(0,0){\arDU{\,\bullet}}
  \put(0.7,0){\arDU{\,\bullet}}
  \put(1.4,0){\arDU{\!\!\boxed{G_{30}}}}
  }
  \put(1.4,3.4){
  \put(0,0){\arDU{\,\bullet}}
  \put(0.7,0){\arDU{\!F_{-20}^\ast}}
  }
  \put(1.75,3.75){
  \put(0,0){\arDU{\!\!\boxed{G_{3,-2}}}}
  }
  }
    \put(2.5,0){
\put(-.35,2.05){
\put(0,0){\arUD{\,}}
\put(0.7,0){\arUD{\,}}
\put(1.4,0){\arUD{\,}}
\put(2.1,0){\arUD{\,}}
\put(2.8,0){\arUD{\,}}
\put(3.5,0){$\nearrow$}
\put(4.2,-.35){$\searrow$}
}
\put(0,1.7){
\put(0,0){\arUD{\,\bullet}}
\put(0.7,0){\arUD{\,\bullet}}
\put(1.4,0){\arUD{\!F_{02}^\ast}}
\put(2.1,0){\arUD{\,\bullet}}
\put(2.8,0){\arUD{\,\bullet}}
\put(3.5,0){$\nearrow$}
}
\put(.35,1.35){  \put(0,0){\arUD{\,\bullet}}
  \put(0.7,0){\arUD{\,\bullet}}
  \put(1.4,0){\arUD{\!F_{04}^\ast}}
  \put(2.1,0){\arUD{\,\bullet}}
  \put(2.8,0){\arUD{\,\bullet}}
  }
  \put(.7,1){
  \put(0,0){\arUD{\,\bullet}}
\put(0.7,0){\arUD{\,\bullet}}
\put(1.4,0){\arUD{\!F_{24}^\ast}}
\put(2.1,0){\arUD{\,\bullet}}
  }
  \put(1.05,0.65){
  \put(0,0){\arUD{\,\bullet}}
\put(0.7,0){\arUD{\,\bullet}}
\put(1.4,0){\arUD{\!\!\boxed{G_{04}}}}
  }
  \put(1.4,0.3){
  \put(0,0){\arUD{\,\bullet}}
\put(0.7,0){\arUD{\!E_{-20}^\ast}}
}
  \put(1.75,-.05){
  \put(0,0){\arUD{\!\!\boxed{G_{24}}}}
}
}
%
  \put(-.3,-2){
  \put(0,2){
  \put(0,0){\arDU{\!\!\boxed{\!F_{-20}}\!\text{- -}}}
  \put(0.7,0){\arDU{\!\!\!\!F_{-11} \text{ - - -}}}
  \put(1.4,0){\arDU{\!\boxed{F_{02}}\!\text{- - -}}}
  \put(2.1,0){\arDU{F_{13}\! \ \text{- - -}}}
  \put(2.8,0){\arDU{\!\boxed{F_{24}}}}
  }
  \put(0.35,2.35){
  \put(0,0){\arDU{\,\bullet}}
  \put(0.7,0){\arDU{\,\bullet}}
  \put(1.4,0){\arDU{\,\bullet}}
  \put(2.1,0){\arDU{\,\bullet}}
  }
  \put(0.7,2.7){
  \put(0,0){\arDU{\,\bullet}}
  \put(0.7,0){\arDU{\,\bullet}}
  \put(1.4,0){\arDU{\!\boxed{F_{04}}}}
  }
  \put(1.05,3.05){
  \put(0,0){\arDU{\,\bullet}}
  \put(0.7,0){\arDU{\,\bullet}}
  }
  \put(1.4,3.4){
  \put(0,0){\arDU{\,\bullet}}
  }
  }
   }
\end{picture}}
}}
\caption{Auslander-Reiten quiver for $\cC_{can}(Z_2(Z_\infty))$, Example \ref{examples of Z2(Zinfty)}.}\label{AR}
\label{fig02}
\end{center}
\end{figure}

\subsection{Geometric triangulations}\label{triangulation}

As pointed out in the Remark \ref{rem} some clusters will  correspond to triangulations of the unit disk $D^2$ after the limit points $L(Z)$ of the set $Z$ are removed. However, not all clusters are such. In this subsection we address this question. 
First we define  simplicial complex $K(\cS)$ associated to a cluster $\cS$ and investigate when there is a homeomorphism $\psi_{\cS}: |K(\cS)|\to D^2\backslash L(Z)$, i.e. when $(K(\cS), \psi_{\cS})$ defines a triangulation of $D^2\backslash L(Z)$.
First, we recall the definition and some basic properties of triangulations of topological spaces.

\begin{defn}
A \emph{triangulation} of a topological space $B$ is a simplicial complex $K$ together with a homeomorphism $\psi:|K|\cong B$ from the geometric realization
\[
	|K|=\coprod_{\sigma\in K^n}\Delta^n/\sim
\]
of $K$ to $B$. We recall that a \emph{simplicial complex} is a collection $K$ of finite nonempty subsets having the property that any nonempty subset of an element of $K$ is also an element of $K$. $K_n$ denotes the collection of $n+1$ element sets in $K$. Thus $K=\coprod_{n\ge0}K_n$ and all elements of $K$ are subsets of $K_0$. The maximum $n$ for which $K_n$ is nonempty is the \emph{dimension} of $K$.
\end{defn}

\begin{defn}
Given any cluster $\cS$ in $\cC_{can}(Z)$, the corresponding simplicial complex $K(\cS)$ is the 2-dimensional complex defined as follows.
\begin{enumerate}
\item[(0)] The vertex set is $K_0=Z$. 
\item Two points $x,y$ form an element $\{x,y\}\in K_1$ if either 
\begin{enumerate}
\item $E(x,y)$ lies in $\cS$, or
\item $x,y$ are consecutive elements of $Z$, i.e., either $y=x^+$ or $y=x^-$.
\end{enumerate}
\item Three points $x,y,z\in Z$ form an element of $K_2$ if and only if $\{x,y\},\{x,z\},\{y,z\}$ are elements of $K_1$.
\end{enumerate}
\end{defn}

\begin{defn} Let $\mathcal S$ be a cluster in the category $\mathcal C_{can}(Z)$ where $Z$ is an admissible subset of $S^1$.
Define the map
\[
	\psi_\cS:|K(\cS)|\to D^2
\]
by sending each vertex to itself, each 1-simplex of the form $\{z,z^+\}$ to the arc on the circle from $z$ to $z^+$ and any other 1-simplex $\{x,y\}$ to the closed geodesic connecting $x$ and $y$ (the circle or straight line which meets $S^1$ orthogonally at those two points) and, finally, the 2-simplices should be mapped by an arbitrary homeomorphism (which agrees with the already given map on the boundary) onto the closed region in $D^2$ enclosed by the boundary. 
\end{defn}

\begin{defn} A cluster $\cS$ is called a \emph{triangulation cluster} in the category $\cC_{can}(Z)$ if the pair $(K(\cS), \psi_{\cS})$ defines a triangulation of $D^2\backslash L(Z)$.
\end{defn}

It will be shown in Lemma \ref{1-1} that a necessary condition for a cluster to be a triangulation cluster, is that the cluster is locally finite. However, this is not a sufficient condition as will be shown in the example of Figure \ref{not2}.

\begin{defn}
We say that the cluster $\cS$ is \emph{locally finite} if every element of $Z$ occurs only finitely many times as an endpoint of an element of $\cS$.
\end{defn}

\begin{lem}\label{1-1} Let $\cS$ be a cluster in the triangulated category $\cC_{can}(Z)$ for an admissible subset $Z$ in $S^1$. Then:
\begin{enumerate}
\item The mapping $\psi_\cS$ is continuous and 1-1. 
\item $\psi_\cS$ is a homeomorphism onto an open subset of $D^2$ if and only if $\cS$ is locally finite.
\end{enumerate}
\end{lem}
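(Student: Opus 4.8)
The plan is to treat the two parts separately, establishing continuity and injectivity first and then analyzing the local structure of $\psi_\cS$ at the vertices, all of which lie on the boundary circle $S^1$.

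Since $|K(\cS)|$ carries the weak topology of a geometric realization, a map out of it is continuous as soon as its restriction to each closed simplex is continuous. Here $\psi_\cS$ is defined simplex by simplex by explicit continuous maps --- vertices to points, $1$-simplices to arcs or closed geodesics parametrized injectively, $2$-simplices by a chosen homeomorphism onto the enclosed closed region --- which agree on shared faces, so continuity is immediate. For injectivity I would use that compatibility is the same as non-crossing (Remark \ref{compatible/noncrossing}), so the $1$-simplices of $K(\cS)$ map to a family of pairwise non-crossing arcs in $D^2$ (the geodesics from $\cS$ together with the boundary arcs $\{z,z^+\}$), any two of which meet only at common endpoints. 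I would first show that whenever $\{x,y\},\{y,z\},\{z,x\}\in K_1$ the three corresponding arcs bound an \emph{empty} curvilinear triangle: a fourth arc of the family meeting its interior would have to cross one of the three sides, contradicting non-crossing (using that a side which is not a geodesic is a boundary arc $\{w,w^+\}$ with no vertex of $Z$ strictly between its endpoints). Consequently the open $2$-simplices map to pairwise disjoint open regions, disjoint also from all vertices and $1$-simplices, and since each simplex is parametrized injectively, $\psi_\cS$ is $1$-$1$.

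For part (2) the crux is the local picture at a vertex $z\in Z\subset S^1$. Local finiteness is exactly the statement that $K(\cS)$ is a locally finite simplicial complex, so $|K(\cS)|$ is locally compact and each point has a compact star neighborhood; a continuous injection of a compact set into the Hausdorff space $D^2$ is a homeomorphism onto its image, so $\psi_\cS$ is locally a homeomorphism onto its image. To upgrade this to a homeomorphism onto an \emph{open} subset I would show $\psi_\cS$ is an open map into $D^2$. At interior and edge points this is clear. At $z$ I would invoke maximality of the cluster together with discreteness and the two-sided limit condition of Definition \ref{def: admissible subsets of S1}: listing the edges at $z$ in angular order between the boundary arcs $\{z,z^+\}$ and $\{z,z^-\}$, there are finitely many, and between two angularly consecutive edges there can be neither a vertex nor a limit point of $Z$ (otherwise a further compatible geodesic from $z$ could be added), so consecutive far endpoints are successors in $Z$ and the sector between them is a genuine triangle of $K(\cS)$. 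Thus the finitely many triangles at $z$ tile the entire half-disk neighborhood of $z$ in $D^2$, the image of a neighborhood of $z$ is a relatively open half-disk, and $\psi_\cS$ is open; being a continuous open injection it is a homeomorphism onto its open image.

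Conversely, if $\cS$ is not locally finite, some $z$ is the endpoint of infinitely many geodesics $\{z,y_n\}$; by compactness the $y_n$ accumulate at a point $x^\ast\in S^1$, which is then a limit point of $Z$ distinct from $z$, so these geodesics accumulate onto the geodesic $\{z,x^\ast\}$. This geodesic is not an edge of $K(\cS)$, since $x^\ast\notin Z$, and I would show its interior points near $z$ lie in no image simplex: such a point cannot lie in the interior of any triangle $\{a,b,c\}$ of $\cS$, for then $\{z,y_n\}$ with $n$ large would enter that triangle and cross one of its sides, again contradicting non-crossing. Hence every neighborhood of $z$ in $D^2$ meets the complement of the image, so the image is not open and $\psi_\cS$ is not a homeomorphism onto an open subset. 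The main obstacle is precisely this local analysis at the boundary vertices: proving, via maximality and the two-sided limit condition, that local finiteness forces the triangles at each vertex to fill a full half-disk, and that infinite degree necessarily leaves the limiting ray $\{z,x^\ast\}$ uncovered. The continuity and injectivity of part (1) are comparatively routine once compatibility is read as non-crossing.
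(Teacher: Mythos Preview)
Your part (1) and the overall architecture of part (2) match the paper's proof, but there is a genuine gap in your forward direction of (2). You claim that between two angularly consecutive edges $\{z,a\}$ and $\{z,b\}$ at a vertex $z$ there can be no vertex of $Z$ between $a$ and $b$, because ``a further compatible geodesic from $z$ could be added''. This is false: if $\{a,b\}$ itself lies in $\cS$, then any geodesic $\{z,w\}$ with $w$ strictly between $a$ and $b$ crosses $\{a,b\}$ and is \emph{not} compatible with $\cS$, so no contradiction with maximality arises. Consecutive far endpoints need not be successors in $Z$; the third side of the sector can be a cluster geodesic rather than a boundary arc. The paper's fix is exactly what you need: one checks directly that $E(a,b)$ is compatible with every object of $\cS$ (any crossing edge would have to cross $\{z,a\}$ or $\{z,b\}$, or else be an edge from $z$ landing strictly between $a$ and $b$, contradicting consecutiveness), so by maximality $E(a,b)\in\cS$ or $a,b$ are adjacent; either way $\{a,b\}\in K_1$ and the sector is a $2$-simplex of $K(\cS)$. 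With this correction your open-star argument goes through.

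For the converse you work harder than necessary. The paper simply observes that an open subset of $D^2$ is locally compact, so if $\psi_\cS$ is a homeomorphism onto such a set then $|K(\cS)|$ is locally compact, which forces $K(\cS)$ to be locally finite. Your geometric argument about the uncovered limiting geodesic $\{z,x^\ast\}$ is plausible but the verification that interior points of that geodesic lie in no image simplex is delicate (one must rule out triangles having $z$ as a vertex and a boundary arc as opposite side, where the ``crossing'' contradiction is not immediate). The local-compactness route sidesteps all of this.
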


\begin{proof} (1) The map $\psi_\cS$ is continuous since it is continuous on each simplex. To see that the map $\psi_\cS$ is 1-1, notice that since the elements of $\cS$ are compatible, the corresponding geodesics do not cross except possibly at endpoints by Remark \ref{compatible/noncrossing}. Therefore $\psi_\cS$ is 1-1 on the 1-skeleton of $|K(\cS)|$. The interior of each 2-simplex goes to the region enclosed by the three geodesics which are the images of the three sides of the 2-simplex. These regions cannot meet other geodesics since the geodesics do not cross. So, they are also disjoint making the entire mapping 1-1. 

(2) Now assume that $\cS$ is locally finite and take any point $x$ in the image of $\psi_\cS$. If $x$ is a vertex then there are finitely many geodesics at $x$ plus the two arcs connecting $x$ to $x^+$ and $x^-$. Given any two consecutive edges, the other endpoints of these edges are two points $y,z\in Z$ with the property that the object $E(y,z)$ is compatible with every object in $\cS$. So $\cS$ must contain an object isomorphic to $E(y,z)$. Take the closed region bounded by the geodesics or arcs connecting $x,y,z$ and delete the closed geodesic from $y$ to $z$. Let $U(x)$ be the union of these. ($U(x)$ is called the `open star' of $x$.) Then $U(x)$ is an open subset of $D^2$ containing $x$. Since $U(x)$ contains the interior of every simplex with $x$ as a vertex, the union of all $U(x)$ is equal the image of $\psi_\cS$. Therefore, the image of $\psi_\cS$ is open. Since any open subset of a locally compact space is locally compact, this implies that the image of $\psi_\cS$ is locally compact. Since $\cS$ is locally finite, $|K(\cS)|$ is also locally compact. So, we have a continuous bijection between two locally compact spaces. To see that it is a homeomorphism it suffices to show that this mapping is proper, i.e., that the inverse image of any compact subset is compact. But any compact subset is covered by a finite number of the open stars $U(x)$. The inverse image is therefore contained in a finite number of simplices. The closure of such a set is compact. Therefore the map is proper.

Conversely, suppose that $\psi_\cS$ is a homeomorphism onto an open subset of $D^2$. Then $|K(\cS)|$ is locally compact and this is only possible if $\cS$ is locally finite. So, this condition is necessary and sufficient.
\end{proof}

While the condition on a cluster of being locally finite does not guarantee that the cluster is a triangulation cluster, it is an easy test to show that a cluster is not a triangulation cluster. This will follow from the following lemma and Theorem \ref{triangulationtheorem}.

\begin{lem} \label{2implies1} Let $\cS$ be a cluster in $\cC_{can}(Z)$ for admissible $Z\subset S^1$. Suppose that for every sequence of objects $E_i(x_i,y_i)$ the points $x_i$ converge to a point $w\in L(Z)$ if and only if the points $y_i$ converge to the same point $w\in L(Z)$. Then $\cS$ is locally finite.
\end{lem}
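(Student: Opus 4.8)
The plan is to argue by contraposition: I will assume that $\cS$ is \emph{not} locally finite and construct from this a sequence of objects of $\cS$ that violates the biconditional in the hypothesis. So suppose some $z\in Z$ occurs as an endpoint of infinitely many objects of $\cS$. Since a cluster consists of pairwise non-isomorphic indecomposables and $E(z,y)\cong E(z,y')$ forces $y=y'$, this yields infinitely many \emph{distinct} points $y_1,y_2,\dots\in Z\setminus\{z\}$ with $E(z,y_i)\in\cS$ for all $i$.

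Next I would use compactness of $S^1$ to pass to a subsequence along which $y_i\to w$ for some $w\in S^1$. Because the $y_i$ are pairwise distinct points of $Z$, the limit $w$ is an accumulation point of an infinite subset of $Z$ and hence a limit point of $Z$, so $w\in L(Z)$. The crucial complementary fact is that the common endpoint $z$ cannot lie in $L(Z)$: by the discreteness clause of Definition \ref{def: admissible subsets of S1}(1), $z$ has an open neighborhood in $S^1$ meeting $Z$ only in $z$, so $z\notin L(Z)$ (equivalently $Z\cap L(Z)=\emptyset$).

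Finally I would apply the hypothesis to the sequence $E(z,y_i)\in\cS$, viewed as $E_i(x_i,y_i)$ with constant first coordinate $x_i=z$. Here $y_i\to w\in L(Z)$, whereas $x_i=z$ converges only to $z\notin L(Z)$, so $x_i$ converges to no point of $L(Z)$. Thus one side of the biconditional holds while the other fails, contradicting the hypothesis; hence $\cS$ must be locally finite. The argument is short, and the only steps needing care are the extraction of the convergent subsequence and the observation that the shared endpoint $z$ is not itself a limit point of $Z$ — this is precisely where admissibility is used — so I do not expect a genuine obstacle beyond keeping the direction of the ``if and only if'' straight.
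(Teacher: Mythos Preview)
Your argument is correct and follows essentially the same route as the paper's proof: assume $\cS$ is not locally finite, pick $z\in Z$ with infinitely many $E(z,y_i)\in\cS$, extract a convergent subsequence $y_i\to w\in L(Z)$ by compactness, and derive a contradiction from $z\notin L(Z)$. You are in fact slightly more explicit than the paper in justifying why $w\in L(Z)$ (via distinctness of the $y_i$), which is a good touch.
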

\begin{proof} Suppose that $\cS$ is not locally finite. 
Then there is a point $z\in Z$ and an infinite sequence of objects $E_i(z,y_i)$ in $\cS$. Since $S^1$ is compact, there is an infinite subsequence $\{y'_j\}$ of $\{y_i\}$ which converges to some point $w\in L(Z)$. By the assumption of the lemma we have $z=w$. This gives a contradiction since $Z$ is discrete and hence $Z\cap L(Z)=\emptyset$.
\end{proof}

\begin{thm}\label{triangulationtheorem}
Let $\cS$ be a cluster in $\cC_{can}(Z)$ for admissible $Z\subset S^1$. Then the following statements are equivalent:
\begin{enumerate}
\item The cluster $\cS$ is a triangulation cluster.
\item  For every sequence of objects $E_i(x_i,y_i)$ in $\cS$ the points $x_i$ converge to a point $w\in L(Z)$ if and only if the points $y_i$ converge to the same point $w\in L(Z)$.
\end{enumerate}
\end{thm}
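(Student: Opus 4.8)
The plan is to prove the two implications separately, using the two preceding lemmas to reduce everything to one geometric statement: that the image of $\psi_\cS$ equals $D^2\setminus L(Z)$. For $(2)\Rightarrow(1)$ I would first invoke Lemma \ref{2implies1} to get that $\cS$ is locally finite, and then Lemma \ref{1-1}(2) to get that $\psi_\cS$ is a homeomorphism onto an open subset $U\subseteq D^2$; since $D^2\setminus L(Z)$ is open in $D^2$, it then suffices to show $U=D^2\setminus L(Z)$. The inclusion $U\subseteq D^2\setminus L(Z)$ is immediate, since every vertex lies in $Z$, every boundary arc $\{z,z^+\}$ lies between consecutive points of $Z$, and every $2$-simplex has interior in the open disk, so no point of $L(Z)\subseteq S^1$ is hit (using $Z\cap L(Z)=\emptyset$). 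For a point of $S^1\setminus L(Z)$ I would look at the maximal open arc of $S^1\setminus\overline Z$ containing it: the two-sided limit condition of Definition \ref{def: admissible subsets of S1} forces both endpoints of that arc to lie in $Z$ (a limit-point endpoint would push points of $Z$ into the arc), so the point is covered either as a vertex or by a boundary arc $\{z,z^+\}$.

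The heart of the argument is covering the open disk $D^\circ=D^2\setminus S^1$, and here I would argue by contradiction. If some interior point is uncovered, then $C=D^\circ\setminus U$ is a nonempty closed subset of the connected set $D^\circ$, and $C\neq D^\circ$ because any $E(a,b)\in\cS$ contributes interior points to $U$; hence $C$ has a frontier point $q$. Since $q\in\overline U\setminus U$ while $U$ is a locally finite union of \emph{closed} simplices, only finitely many simplices meeting a neighborhood of $q$ would force $q\in U$; so infinitely many interior geodesics $E_i(x_i,y_i)$ of $\cS$ accumulate at $q$. Passing to a subsequence with $x_i\to w$ and $y_i\to w'$ in $S^1$, these pairwise noncrossing geodesics converge to the chord $\overline{ww'}$; as $q$ is an interior point of this limit chord, $w\neq w'$, and local finiteness excludes $w,w'\in Z$ (a repeated endpoint), so $w,w'\in L(Z)$. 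This contradicts $(2)$ directly, completing $(2)\Rightarrow(1)$.

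For $(1)\Rightarrow(2)$ I would prove the contrapositive geometrically. If $(2)$ fails there is, after passing to a subsequence, a sequence $E_i(x_i,y_i)$ in $\cS$ with $x_i\to w\in L(Z)$ but $y_i\to w'\neq w$, so the $E_i$ converge to the nondegenerate chord $\gamma=\overline{ww'}$. Fixing an interior point $p\in\gamma$, it is a limit of points on the $E_i$, hence $p\in\overline U$; but $p\notin U$: it cannot lie in the interior of a $2$-simplex (the $E_i$ would have to enter it), and it cannot lie on an edge $E(a,b)\in\cS$, since distinct disk geodesics meet transversally and nearby $E_i$ would then cross $E(a,b)$, violating compatibility, while $\gamma\neq E(a,b)$ because $w\in L(Z)\setminus Z$. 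Thus $p\in(D^2\setminus L(Z))\setminus U$, so $\psi_\cS$ is not onto $D^2\setminus L(Z)$ and $\cS$ is not a triangulation cluster.

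The main obstacle is the interior-covering step of $(2)\Rightarrow(1)$: promoting a purely topological ``hole'' into an honest sequence of cluster geodesics accumulating along a limit chord. The two delicate points are (a) arguing that local finiteness of the complex forces infinitely many \emph{edges}, not merely simplices, to cluster at the frontier point $q$, and (b) verifying that noncrossing disk geodesics whose arcs approach an interior point must Hausdorff-converge to a single chord through that point rather than oscillating among several. Both reduce to compactness of $S^1\times S^1$ together with the noncrossing hypothesis, but they carry the essential weight of the proof.
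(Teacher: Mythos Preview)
Your proof is correct, and its overall scaffolding matches the paper's: for $(2)\Rightarrow(1)$ you invoke Lemma~\ref{2implies1} and Lemma~\ref{1-1}(2) exactly as the paper does, reducing to the claim that the image of $\psi_\cS$ is all of $D^2\setminus L(Z)$, and for $(1)\Rightarrow(2)$ you argue the contrapositive. The \emph{execution} of both steps, however, is genuinely different from the paper's.

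For the interior-covering step of $(2)\Rightarrow(1)$, the paper gives a direct constructive argument: from an uncovered interior point $x$ it draws a geodesic $\gamma$ to some vertex $z\in Z$, shows $\gamma$ can cross only finitely many $1$-simplices (otherwise the two accumulation points on either side of $\gamma$ would violate $(2)$), and then reads off the $2$-simplex containing $x$ from the last edge crossed or from the two consecutive edges at $z$ flanking $\gamma$. Your frontier-point argument is instead a proof by contradiction via point-set topology: you locate a boundary point $q$ of the uncovered region, force infinitely many edges to accumulate there, and extract a limit chord with distinct endpoints in $L(Z)$. This works, but as you note, the step from ``infinitely many simplices near $q$'' to ``infinitely many \emph{interior edges} near $q$'' needs the observation that a segment from a nearby covered point toward $q$ must exit its $2$-simplex through an interior edge (vertices and boundary arcs live on $S^1$, away from $q$). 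The paper's path-to-a-vertex device sidesteps this bookkeeping entirely.

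For $(1)\Rightarrow(2)$ the paper uses a compact-path argument: it separates $w$ and $w'$ by a compact arc $P\subset D^2\setminus L(Z)$ and observes that $P$ meets only finitely many simplices of the triangulation, while infinitely many of the $E_i$ must cross $P$. Your version instead pins down a single interior point $p$ on the limit chord and shows it is missed by every simplex. Both are short; the paper's is slightly more robust in that it uses only the local finiteness of a triangulation, whereas yours leans on transversality of distinct hyperbolic geodesics. Either way the argument goes through.
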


\begin{proof} 
(1)$\implies$(2) Suppose  $\cS$ is a triangulation cluster, i.e. it gives a triangulation of the topological space $D^2\backslash L(Z)$. 
Suppose (2) fails. The there exists a sequence of objects $E_i(x_i,y_i)$ such that $x_i$ converge to $w\in L(Z)\subset S^1$ and $y_i$ converge to a different point $w'\in L(Z)\subset S^1$. The points $w,w'$ cut $S^1$ into two components. Pick two elements $a,b$, one from each component so that $a,b$ are not in the set $Z$. Pick a path $P$ from $a$ to $b$. Then $P$ is a compact subset of $D^2\backslash L(Z)$ and therefore meets only finitely many simplices of the triangulation of $D^2\backslash L(Z)$. 
This is a contradiction since there is an infinite sequence of edges in the triangulation which goes from points close to $w$ to points close to $w'$ and will therefore cross the path $P$. So, Condition (2) must hold.

(2)$\implies$(1) Suppose that $\cS$ is a cluster in $\cC_{can}(Z)$ satisfying Condition (2). Then Lemma \ref{2implies1} implies that $\cS$ is locally finite.  By the Lemma \ref{1-1}(2)we know that $\psi_\cS:|K(\cS)|\to D^2$ is a homeomorphism onto its image. So, it suffices to show that the image is the complement of the limit set $L(Z)$.

Let $x$ be an element of $D^2\backslash L(Z)$. If $x\in S^1$ then either $x\in Z$ or $x$ lies between two consecutive elements of $Z$. So, $x$ is in the image of $\psi_\cS$. Suppose that $x$ lies in the interior of $D^2$. If $x$ lies on a 1-simplex, then $x$ lies in the image of $\psi_\cS$. So suppose $x$ does not lie in the image of any 1-simplex. Choose a geodesic $\gamma$ from $x$ to any point $z\in Z$. Then $\gamma$ meets at most finitely many 1-simplices in the triangulation since, otherwise, there would be accumulation points on two sides of $\gamma$ which are supposed to be equal, giving a contradiction. If $\gamma$ does not meet any of the 1-simplices of the triangulation then $\gamma$ lies between two consecutive 1-simplices $\alpha,\beta$ incident to the vertex $z$. Therefore, $x$ lies in the unique 2-simplex having $\alpha$ and $\beta$ on its boundary. If $\gamma$ meets a 1-simplex $\alpha$ then $x$ lies in one of the two 2-simplexes containing $\alpha$ in its boundary. So, again $x$ lies in the image of $\psi_\cS$. Therefore, $\psi_\cS$ has image $D^2\backslash L(Z)$. Therefore, Condition (2) implies that $\cS$ is a triangulation cluster. 
\end{proof}

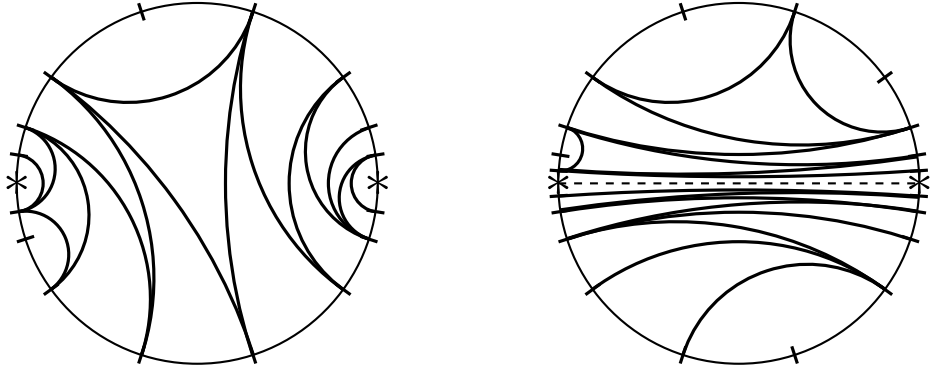
\begin{figure}[htbp]
\begin{center}
\begin{tikzpicture}[scale=.8]
\begin{scope}
\draw[thick] (0,0) circle [radius=3cm];
\draw (3,0) node{\huge$\ast$};
\draw (-3,0) node{\huge$\ast$};
\foreach \x in {-4,4,184,18,-18,9,-9,36,72,-72,144,108,189,198, 171,162,216,252,324,176}
\draw[very thick,rotate=\x] (2.85,0)--(3.15,0);
\clip (0,0) circle [radius=3cm];
\cluster{45}{3.37}{1.53};
\cluster{81}{6.61}{5.89};
\cluster{90}{9.71}{9.23};
\cluster{85.5}{12.85}{12.5};
\cluster{-85.5}{12.85}{12.5};
\ncluster{90}{9.71}{9.23};
\ncluster{81}{6.61}{5.89};
\ncluster{90}{5.1}{4.13};
\cluster{92}{26.5}{26.33};
\ncluster{92}{26.5}{26.33};
\cluster{169}{3.02}{.37};
\ncluster{90}{19.18}{18.94};
\cluster{90}{43.01}{42.9};
\cluster{-90}{43.01}{42.9};
\draw[very thick, rotate=108] (3.708,0) circle [radius=2.18cm];
\draw[very thick, rotate=108] (-3.708,0) circle [radius=2.18cm];
\draw[thick,dashed] (-3,0)--(3,0);
\end{scope}
\begin{scope}[xshift=-9cm] 
\draw[thick] (0,0) circle [radius=3cm];
\draw (3,0) node{\huge$\ast$};
\draw (-3,0) node{\huge$\ast$};
\foreach \x in {18,-18,9,-9,36,72,-72,144,108,189,198, 171,162,216,252,324}
\draw[very thick,rotate=\x] (2.85,0)--(3.15,0);
\clip (0,0) circle [radius=3cm];
\cluster{198}{5.1}{-4.13};
\ncluster{198}{5.1}{-4.13};
\cluster{9}{3.37}{1.53};
\ncluster{9}{3.37}{1.53};
\cluster{207}{4.24}{-3};
\cluster{175.5}{3.09}{-.72};
\cluster{-4.5}{3.09}{-.72};
\cluster{0}{3.15}{.97};
\cluster{0}{3.04}{.48};
\cluster{180}{3.04}{.48};
\ncluster{22.5}{3.09}{.72};
\draw[very thick, rotate=108] (3.708,0) circle [radius=2.18cm];
\draw[very thick] (3.708,0) circle [radius=2.18cm];
\draw[very thick] (9.7,0) circle [radius=9.23cm];
\draw[very thick, rotate=-144] (9.7,0) circle [radius=9.23cm];
\end{scope}
\end{tikzpicture}
\caption{Two locally finite clusters on $Z=Z_2(Z_{\infty})$: the one on the left satisfies Condition (2) and thus gives a geometric triangulation of $D^2\backslash L(Z)$, the one on the right does not satisfy Condition (2) and has geodesics accumulating to the horizontal line which is disjoint from all the simplices, hence simplices do not give a geometric triangulation of $D^2\backslash L(Z)$.}
\label{Figure88}\label{not2}
\end{center}
\end{figure}

\begin{rem}
Note that triangulation clusters are closed under mutations. Also locally finite clusters are closed under mutations. Therefore there are three cluster structures on the triangulated category $\cC_{can}(Z)$ given by either the triangulation clusters or the locally finite clusters or all clusters.
\end{rem}

\begin{prop}
Any triangulation of $D^2\backslash L(Z)$ with vertices on the set $Z$ is given by the map $\psi_\cS:|K(S)|\to D^2\backslash L(Z)$ for some triangulation cluster $\cS$ in $\cC_{can}(Z)$.\qed
\end{prop}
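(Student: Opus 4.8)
The plan is to run the correspondence of Lemma~\ref{1-1} and Theorem~\ref{triangulationtheorem} backwards: from a given triangulation I read off its diagonals, declare the associated objects to be a cluster, and then check that this cluster is a triangulation cluster realizing the original triangulation. So let $(K,h)$ be a triangulation of $D^2\backslash L(Z)$, with $h\colon|K|\to D^2\backslash L(Z)$ a homeomorphism and vertex set $K_0=Z$. Since $Z\subset S^1$ and $Z\cap L(Z)=\emptyset$, every vertex lies on the boundary circle and there are no interior vertices. I would first record the elementary combinatorial facts about such a triangulation: each boundary edge joins two consecutive elements $z,z^+$ of $Z$ (consecutive vertices on $S^1$ are consecutive in $Z$ since $K_0=Z$), and no interior edge joins consecutive elements (such an edge together with the boundary arc would bound a bigon, which cannot be subdivided into triangles with vertices in $Z$). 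Define
\[
	\cS:=\{\,E(x,y)\ :\ \{x,y\}\text{ is an interior edge of }K\,\},
\]
noting that each such $E(x,y)$ is a genuine nonzero indecomposable of $\cC_{can}(Z)$, the excluded consecutive pairs being exactly the projective-injectives $E(z,z^+)=0$.

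Next I would show that $\cS$ is a cluster. The edges of $K$ are pairwise noncrossing (they meet only at shared vertices), so by Remark~\ref{compatible/noncrossing} the objects of $\cS$ are pairwise compatible. For maximality, suppose $E(x,y)\notin\cS$ is compatible with every object of $\cS$; then the geodesic $\{x,y\}$ crosses no edge of $K$, forcing $x$ and $y$ to be two vertices of a common triangle of $K$. Hence $\{x,y\}$ is already a side of that triangle, so either $E(x,y)\in\cS$ or $x,y$ are consecutive and $E(x,y)=0$; both contradict the choice of $E(x,y)$. Thus $\cS$ is a maximal compatible collection and therefore a cluster in $\cC_{can}(Z)$ (Theorem~\ref{Ccanonical}); the mutation axioms of Definition~\ref{Cluster structures} hold by the usual quadrilateral-flip argument, exactly as in the spaced-out case.

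I would then identify $K(\cS)$ with $K$. Both have vertex set $Z$, and the $1$-simplices of $K(\cS)$ are by definition the interior edges of $K$ (those with $E(x,y)\in\cS$) together with the consecutive pairs $\{z,z^+\}$ (the boundary edges of $K$), so the $1$-skeleta agree. For the $2$-simplices, a triple $\{x,y,z\}$ with all three pairs edges bounds a region of $D^2$ whose intersection with $S^1$ contains no vertex other than $x,y,z$: each geodesic side separates its chord from the boundary subarc carrying the intermediate points of $Z$, and each arc side joins consecutive points. Since all vertices lie on $S^1$, this region is an empty triangle, hence a single face of $K$. So the combinatorial $2$-simplices of $K(\cS)$ are exactly the faces of $K$, and $K(\cS)=K$.

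Finally, because $|K(\cS)|=|K|\cong D^2\backslash L(Z)$ is a manifold, $K(\cS)$ is locally finite and so is $\cS$. To conclude I would verify Condition~(2) of Theorem~\ref{triangulationtheorem} directly from $(K,h)$: if some sequence $E_i(x_i,y_i)\in\cS$ had $x_i\to w$ and $y_i\to w'$ with $w\neq w'$ in $L(Z)$, I would choose $a,b\notin Z$ in the two components of $S^1\backslash\{w,w'\}$ and a compact path $P$ from $a$ to $b$; for large $i$ the edge $\{x_i,y_i\}$ separates $a$ from $b$, so its $h$-image crosses $P$, forcing $P$ to meet infinitely many simplices of $K$ and contradicting compactness. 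Hence Condition~(2) holds, Theorem~\ref{triangulationtheorem} makes $\cS$ a triangulation cluster, and since $K(\cS)=K$ the homeomorphism $\psi_\cS$ realizes the original triangulation. The main obstacle, to my mind, is the identification $K(\cS)=K$: the given triangulation uses arbitrary embedded arcs whereas $K(\cS)$ and $\psi_\cS$ are built from the geodesic/arc model, so the real work is checking that passing to the geodesic representatives neither skips a point of $Z$ on the boundary nor changes which triples bound faces.
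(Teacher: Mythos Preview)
Your proposal is correct and takes essentially the same approach as the paper: define $\cS$ to consist of the objects $E(x,y)$ for the interior 1-simplices $\{x,y\}$ of the given triangulation. The paper's proof is a single sentence to this effect (the $\qed$ in the statement signals that the verification is regarded as routine), whereas you have carefully spelled out why $\cS$ is a maximal compatible set, why $K(\cS)=K$, and why Condition~(2) holds; this extra detail is all sound and fills in what the paper leaves implicit.
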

\begin{proof}Let  $\cS$ consist of the objects  $E(x,y)$ for all 1-simplices $\{x,y\}$ which are not subsets of the boundary $S^1$ of $D^2$.
\end{proof}

It is not clear whether triangulation clusters exist in $\cC_{can}(Z)$, especially when the set $Z$ is discrete with infinitely many limit points. We will show that triangulation clusters exist if the set of limit points is not too pathological. Let $L_n(Z), n\ge1,$ be defined recursively as follows: $L_1(Z)$ is the set of limit points of  $Z$ and $L_{n+1}(Z)$ is the set of limit points of $L_n(Z)$. Note that $L_n(Z)\subseteq L_m(Z)$ for all $1\le m\le n$.

\begin{thm}
Suppose that $Z\subset S^1$ is an admissible subset (Definition \ref{def: admissible subsets of S1}). Suppose also that $L_n(Z)$ is empty for sufficiently large $n$. Then $\cC_{can}(Z)$ contains a triangulation cluster.
\end{thm}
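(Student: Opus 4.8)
The plan is to reduce the statement to the construction of a single geometric triangulation and then to build one by induction on the Cantor--Bendixson rank of $Z$. By the Proposition immediately preceding this theorem, any triangulation of $D^2\backslash L(Z)$ whose vertices lie in $Z$ arises as $\psi_\cS$ for a triangulation cluster $\cS$; hence it suffices to produce such a geometric triangulation, using closed geodesics as edges together with the arcs of $S^1$ between consecutive points of $Z$ as boundary edges. Let $N$ be the least integer with $L_N(Z)=\emptyset$. If $N=1$ then $Z$ has no limit points, so, being closed and discrete in the compact space $S^1$, it is finite; then $D^2\backslash L(Z)=D^2$ and any triangulation of the $|Z|$-gon (which exists since $Z$ has at least four elements by admissibility) is the desired geometric triangulation. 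This is the base case.

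For the inductive step assume $N\ge 2$ and that the result holds for every admissible set of strictly smaller rank. The set $P:=L_{N-1}(Z)$ is closed with no limit points, hence finite, say $P=\{p_1,\dots,p_m\}$. Choose pairwise disjoint closed arcs $A_j\ni p_j$ with $A_j\cap P=\{p_j\}$. Each $p_j$ is a two-sided limit point of $Z$ by the admissibility condition, so inside $A_j$ I may pick strictly monotone sequences $s_1^{(j)},s_2^{(j)},\dots\in Z$ approaching $p_j$ from one side and $t_1^{(j)},t_2^{(j)},\dots\in Z$ approaching $p_j$ from the other. The \emph{fan} at $p_j$ is the zig-zag family of geodesics $E(s_i^{(j)},t_i^{(j)})$ and $E(s_{i+1}^{(j)},t_i^{(j)})$, $i\ge 1$; all of these have both endpoints tending to the single point $p_j$.

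The geodesic $E(s_1^{(j)},t_1^{(j)})$ together with the arc through $p_j$ cuts off a region $R_{p_j}$, and the zig-zag further cuts $R_{p_j}$ into infinitely many regions, each bounded by two consecutive fan geodesics and one boundary arc. The vertex set of each such region is $Z$ intersected with a closed sub-arc contained in $A_j$ and avoiding $p_j$, hence avoiding all of $P$; likewise the central region $D^2\backslash\bigcup_j R_{p_j}$ has vertex set meeting no point of $P$. Consequently, for each of these pieces, $L_{N-1}$ of its vertex set is empty, so its rank is strictly smaller than $N$. Each piece is a disk bounded by geodesics and arcs with boundary vertices in $Z$, so it is an instance of the same problem for the admissible set obtained by restricting $(X,c)$ to the relevant sub-arc (Remark \ref{subset}); by the inductive hypothesis each admits a geometric triangulation. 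Assembling the fans at $p_1,\dots,p_m$ with these triangulations of all the regions produces a candidate triangulation of $D^2\backslash L(Z)$.

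The main work, and the step I expect to be delicate, is the verification that the assembled collection is genuinely a triangulation of $D^2\backslash L(Z)$ rather than merely a compatible family. Three things must be checked: that the fan geodesics are pairwise noncrossing and cross none of the geodesics supplied by the inductive triangulations (this should follow from the nesting of the arcs $A_j$ and the monotonicity of the chosen sequences); that the pieces glue along their shared edges to cover all of $D^2\backslash L(Z)$ with no omitted region; and, crucially, that the only accumulation of edges occurs at the single boundary points $p_j$, so that Condition (2) of Theorem \ref{triangulationtheorem} holds globally. The other technical obstacle is the reformulation of each bounded region as a bona fide instance of the theorem, namely checking that the restricted set is admissible (discrete, satisfying the two-sided limit condition, with at least four elements, and carrying the induced cyclic-poset structure); this is arranged by choosing the cut points $s_1^{(j)},t_1^{(j)}$ far enough from $p_j$ that every region retains sufficiently many points of $Z$.
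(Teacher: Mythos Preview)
Your approach is essentially correct but takes a genuinely different route from the paper. You do an upward induction on the rank $N$, building fans only at the top-level limit points $P=L_{N-1}(Z)$ and then recursively triangulating the infinitely many residual pieces, finally assembling and verifying Condition~(2) globally. The paper instead constructs, for \emph{every} limit point $w\in L(Z)$, a noncrossing sequence $E(x_i(w),y_i(w))$ with both endpoints converging to $w$; this family of ``fences'' is built by a downward induction on rank (starting from the finite top stratum $L_n(Z)$ and descending), and then Zorn's Lemma extends the whole collection to a maximal noncrossing set $\cS$. The punchline, isolated afterward as Corollary~\ref{cor: each limit point of triangulation cluster}, is that any such maximal extension automatically satisfies Condition~(2): a violating sequence $E(a_i,b_i)$ with $a_i\to w$, $b_i\to w'\neq w$ would cross infinitely many of the fence geodesics at $w$.

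What each approach buys: the paper's argument sidesteps precisely the two steps you flag as delicate. It never needs to reinterpret a sub-disk as a new admissible subset of $S^1$ (your appeal to Remark~\ref{subset} is not quite enough here; you really need the pinching of Remark~\ref{modification} to make the boundary circles honest and to verify the two-sided limit condition at the cut points), and it never needs to glue infinitely many local triangulations and check Condition~(2) across the seams. Zorn's Lemma absorbs all of that. Your approach, by contrast, is fully constructive and makes the recursive structure of the triangulation explicit, which is conceptually satisfying; the verifications you anticipate do go through (the rank drops because $L_{N-1}$ of a closed-arc restriction avoiding $P$ is contained in $L_{N-1}(Z)\cap(\text{arc})=\emptyset$, and the global Condition~(2) check is a case analysis on whether the putative limit $w$ lies in $P$ or not), but they cost real work that the paper's Zorn-based argument avoids entirely.
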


\begin{proof} For each limit point $w$ of $Z$ we will find a sequence of objects $E(x_i(w),y_i(w))$ of $\cC_{can}(Z)$ so that $x_i(w)$ converges to $w$ from one side and $y_i(w)$ converges to $w$ from the other side and so that 
all geodesics determined by $(x_i(w),y_i(w))$ are pairwise noncrossing. By Zorn's Lemma, this set is contained in a maximal noncrossing set $\cS$ of objects of $\cC_{can}(Z)$. We claim that any such set satisfies Condition (2) of the above theorem and therefore forms a triangulation cluster in $\cC_{can}(Z)$ and this cluster gives a triangulation of $D^2\backslash L(Z)$. The reason is simple. If $\cS$ contains a sequence of objects $E(a_i,b_i)$ with one end converging to a point $w\in L(Z)$ then 
the corresponding geodesics will cross infinitely many of the 
geodesics through $(x_j(w),y_j(w))$ unless the other end also converges to the same point $w$. Thus it suffices to find a sequence of noncrossing pairs $(x_i(w),y_i(w))$ converging to each $w\in L(Z)$.

Let $n$ be minimal so that $L_n(Z)$ is nonempty. Then $L_n(Z)$, being closed and discrete, is a finite set. So, we can find disjoint sequences $(x_i(w),y_i(w))$ converging to each $w\in L_n(Z)$. Let $m< n$ and suppose by downward induction that we have the desired collection of noncrossing pairs converging to every point in $L_{m+1}(Z)$.

Next, we look at $L_{m}(Z)$ as a closed discrete subset of the locally compact space $B_m=S^1\backslash L_{m+1}(Z)$. Choose a metric on $B_m$ so that $B_m$ is complete (send $L_{m+1}(Z)$ to `infinity') and the distance between any two elements of $L_{m}(Z)$ is at least 1. There are only finitely many of the already chosen pairs $(x_i(w),y_i(w))$ in a neighborhood of each point in $L_{m}(Z)$. So, we can choose a sequence of disjoint pairs $(x_i(w),y_i(w))$ converging to each $w\in L_{m}(Z)$. When we reach $m=0$ then we have the desired collection of noncrossing pairs converging to every limit point of $Z$ and we can complete this to a cluster which gives a triangulation of $D^2\backslash L(Z)$ as claimed.
\end{proof}

We extract the key point of the proof:

\begin{cor}\label{cor: each limit point of triangulation cluster}
A maximal set $\cS$ of noncrossing objects of $\cC_{can}(Z)$ is a triangulation cluster if and only if, for each limit point $w$ of $Z$ there is a sequence of objects $E(x_i(w),y_i(w))\in \cS$ so that $x_i(w)$ converges to $w$ from one side and $y_i(w)$ converges to $w$ from the other side.
\end{cor}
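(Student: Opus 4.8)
The plan is to read this corollary as a repackaging of Theorem \ref{triangulationtheorem}, designed so that the two-sided convergence need only be checked on one well-chosen sequence at each limit point rather than on every sequence. Since Theorem \ref{triangulationtheorem} already says that a cluster $\cS$ (equivalently a maximal noncrossing set of objects, by Remark \ref{compatible/noncrossing}) is a triangulation cluster exactly when it satisfies Condition (2) there, I would reduce the corollary to proving that, for a maximal noncrossing $\cS$, Condition (2) is equivalent to the displayed condition: that each $w\in L(Z)$ is approached from both sides by the two endpoint-sequences of a single family $E(x_i(w),y_i(w))\in\cS$.

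For the implication from the displayed condition to Condition (2), I would reproduce the contradiction already sketched in the proof of Theorem \ref{triangulationtheorem}. Suppose Condition (2) failed, so some $E(a_i,b_i)\in\cS$ has $a_i\to w\in L(Z)$ while, after passing to a subsequence, $b_i\to w'\neq w$. The geodesics through $(x_j(w),y_j(w))$ cut off arcs around $w$ that shrink to $\{w\}$ as $j\to\infty$, so for suitable large $i$ and $j$ the point $a_i$ lies on the short arc between $x_j(w)$ and $y_j(w)$ while $b_i$ lies on the complementary arc. Then the chords $\{a_i,b_i\}$ and $\{x_j(w),y_j(w)\}$ interleave on $S^1$, their geodesics cross in the interior, and this contradicts the noncrossing (compatibility) of the members of $\cS$.

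For the reverse implication I would invoke the actual triangulation. By Theorem \ref{triangulationtheorem}, Condition (2) makes $\cS$ a triangulation cluster, so $\psi_\cS$ is a homeomorphism of $|K(\cS)|$ onto $D^2\backslash L(Z)$. Fix $w\in L(Z)$. Because $L(Z)\subset S^1$, the straight radius from the center of $D^2$ to $w$ lies (except for $w$ itself) in $D^2\backslash L(Z)$ and so is covered by images of simplices; its open part passes through no vertex, since all vertices lie on $S^1$. Since $w\notin Z$ and $w$ lies on no boundary arc $\{z,z^+\}$ (such an arc contains no element of $Z$, hence no limit point), $w$ is in the closure of no single simplex, so the radius crosses infinitely many distinct $1$-simplices, transversally, with crossing points tending to $w$. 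Each is a geodesic $\gamma_{x_iy_i}$ with $E(x_i,y_i)\in\cS$, and a radius terminating at $w$ meets $\gamma_{xy}$ in the interior precisely when $x$ and $y$ lie on opposite sides of the diameter through $w$; thus I may relabel so that $x_i$ stays on one side and $y_i$ on the other. Passing to a subsequence with $x_i\to a$ and $y_i\to b$, the geodesics converge to $\gamma_{ab}$ while the crossing points tend to $w$; since $\gamma_{ab}$ meets $S^1$ only at $a,b$, this forces $w\in\{a,b\}$, whereupon Condition (2) gives $a=b=w$. As $x_i,y_i$ lie on opposite sides of $w$ and both converge to it, they approach $w$ from the two sides, producing the required family $E(x_i(w),y_i(w))\in\cS$.

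The hard part will be this reverse implication, and precisely the last step: crossing points approaching $w$ do \emph{not} by themselves force the endpoints to approach $w$, since a geodesic can pass arbitrarily close to the boundary point $w$ while keeping one endpoint far away. Condition (2) is exactly what upgrades ``crossing points $\to w$'' to ``both endpoints $\to w$''. The supporting geometric facts I would need to make precise are the separation statement (that the radius to $w$ crosses $\gamma_{xy}$ iff the diameter through $w$ separates $x$ and $y$) and the accumulation statement (that no single simplex has $w$ in its closure, so infinitely many simplices pile up at $w$); these are the geometric core of the argument.
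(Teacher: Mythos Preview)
Your proof is correct. One small overstatement: the claim that the radius to $w$ meets $\gamma_{xy}$ \emph{precisely} when $x,y$ lie on opposite sides of the diameter through $w$ is false in the converse direction (take $x,y$ on opposite sides but both near the antipode $-w$; then $\gamma_{xy}$ is a tiny arc far from the radius). You only use, and only need, the forward direction: if the radius crosses $\gamma_{xy}$ then $w$ lies on the short arc from $x$ to $y$, hence $x,y$ lie on opposite sides of the diameter through $w$. With that adjustment the argument goes through.

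Your approach to the necessity direction differs from the paper's. You take the single radius to $w$, collect the infinitely many edges of $\cS$ it crosses, and then invoke Condition~(2) of Theorem~\ref{triangulationtheorem} (available since $\cS$ is assumed to be a triangulation cluster) to force both endpoints of a convergent subsequence to equal $w$. The paper instead takes a family of test geodesics $(x_i,y_i)$ (not in $\cS$) with $x_i,y_i\to w$ from opposite sides; each meets only finitely many edges of the triangulation by compactness, and among the endpoints of those edges lying on the short arc through $w$ one finds a consecutive pair $(a_k,a_{k+1})\in\cS$ straddling $w$. Since $a_k,a_{k+1}$ are trapped between $x_i$ and $y_i$ on that short arc, they are forced to $w$ directly, without appealing back to Condition~(2). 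The paper's route is thus more self-contained and constructive; yours is shorter but leans on the equivalence already established in Theorem~\ref{triangulationtheorem}.
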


\begin{proof}
The proof of the theorem implies that this condition is sufficient for $\cS$ to be a triangulation cluster. 
Let $\cS$ be a triangulation cluster. Let $w$ be a limit point of $Z$.
Conversely, for any triangulation cluster $\cS$ and for every limit point $w$ of $Z$, consider a sequence of objects E$(x_i,y_i)$ satisfying the description above. Each object will cross only a finite number of edges of the triangulations since compact subsets of any locally compact cell complex will meet only finitely many cells. Let $(a_1,b_1),\cdots,(a_n,b_n)$ be the edges which meet $(x_i,y_i)$ ordered so that $x_i<a_1\le a_2\le\cdots\le a_n<y_i$. We extend the notation by letting $a_0=x_i$ and $a_{k+1}=y_i$. Then, for each $0\le k\le n$, $(a_k,a_{k+1})$ must be an object of $\cS$ since the curve that goes up from $a_k$ to the geodesic from $x_i$ to $y_i$, moves along the geodesic then back down to $a_{k+1}$ does not meet any object of $\cS$. One of these objects $(a_k,a_{k+1})$ must contain $w$ in its interior and the set of all these forms the desired configuration.
\end{proof}

\begin{prop}
In the cluster category $\cC_{id}(Z_\infty(Z))$ the Auslander-Reiten quiver is a union of components $C_{xy}$ where $x,y$ are nonlimit points of $Z$. The components $C_{xx}$ are of type $\ZZ A_\infty$ and the other components are of type $\ZZ A_\infty^\infty$. A triangulation cluster $\cS$ has infinitely many objects in $C_{xy}$ if $x,y$ are consecutive elements of $Z$. But $\cS\cap C_{xy}$ is finite for $y\neq x,x^-,x^+$. $\cS\cap C_{xx}$ can be finite or infinite.
\end{prop}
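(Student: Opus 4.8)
The plan is to first pin down the shape of the Auslander--Reiten quiver of the category in question and then read off the behaviour of a triangulation cluster from the convergence criterion of Theorem \ref{triangulationtheorem} and its Corollary \ref{cor: each limit point of triangulation cluster}. Since the statement concerns triangulation clusters, the ambient category is the one supplied by Theorem \ref{Ccanonical} for the canonical automorphism; write $W=Z_\infty(Z)$, so that by Example \ref{examples of Z} the limit set $L(W)$ is the closure of $Z$ and every nonlimit point of $W$ lies in a unique \emph{gap} $(z,z^+)$ between consecutive elements $z,z^+$ of $Z$. I label that gap by $z$, and for nonlimit points $x,y$ I write $C_{xy}$ for the AR component whose two endpoints lie in the gaps labelled $x$ and $y$ (so $C_{xx}$ is the ``one gap'' case). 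The first task is to show this is well defined and gives the asserted component types: as in the type $A$ case and as displayed in Figure \ref{AR}, the irreducible maps are the two \emph{pivots} $E(p,q)\to E(p^+,q)$ and $E(p,q)\to E(p,q^+)$ (with their formal inverses), moving one endpoint to its successor or predecessor. Because inside a gap $(z,z^+)\cong\mathbb Z$ the successor and predecessor operations never leave the gap, the unordered pair of gaps containing the endpoints is constant along each component; and conversely the pivots connect all objects with a fixed pair of gaps. When the two gaps are distinct the endpoint indices range over $\mathbb Z\times\mathbb Z$ and no object vanishes (points in distinct gaps are never adjacent in $W$), giving the grid translation quiver $\mathbb Z A_\infty^\infty$; when the endpoints share a gap the indices satisfy $n<m$ with $m=n+1$ forbidden (as $E(z,z^+)$ is projective--injective, hence $0$), giving the half-grid $\mathbb Z A_\infty$. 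This establishes the first three assertions.

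Next I treat a triangulation cluster $\cS$. If $y=x^+$ (the consecutive case), the gaps labelled $x$ and $x^+$ are adjacent and share the single limit point $w=x^+$. By Corollary \ref{cor: each limit point of triangulation cluster}, $\cS$ contains a sequence $E(p_i,q_i)$ with $p_i\to w$ from one side and $q_i\to w$ from the other; these endpoints then lie in the two gaps adjacent to $w$, so $E(p_i,q_i)\in C_{x,x^+}$ and $\cS\cap C_{xy}$ is infinite. The case $y=x^-$ is symmetric. If instead $y\neq x,x^-,x^+$, the gaps labelled $x$ and $y$ share no limit point. Were $\cS\cap C_{xy}$ infinite it would contain objects $E(p_i,q_i)$ with $p_i,q_i$ in the gaps of $x,y$ respectively; each gap has exactly its two bounding limit points as accumulation points, so after passing to a subsequence $p_i\to w_1$ and $q_i\to w_2$ with $w_1$ bounding the gap of $x$ and $w_2$ bounding that of $y$, and non-adjacency forces $w_1\neq w_2$. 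This contradicts Condition (2) of Theorem \ref{triangulationtheorem}. Hence $\cS\cap C_{xy}$ is finite, proving both statements about cross-gap components.

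For the last assertion I would exhibit two triangulation clusters realizing each possibility for $C_{xx}$, the gap $(x,x^+)$. Near every limit point $w$, a neighbourhood is triangulated by ``zippering'' the two adjacent gaps together. Taking this zippering to use only straddling arcs (arcs lying in the adjacent $\mathbb Z A_\infty^\infty$ components), one leaves only a bounded interior region of the gap $(x,x^+)$, which is cut up by finitely many --- possibly zero --- within-gap arcs; the result is a maximal noncrossing set satisfying Condition (2), hence a triangulation cluster by Corollary \ref{cor: each limit point of triangulation cluster}, with $\cS\cap C_{xx}$ finite. Alternatively, near the end $x^+$ one may insert an infinite zig-zag of within-gap arcs $E(p_n,p_{n+2})$, all of whose endpoints tend to $x^+$ (so Condition (2) still holds), and complete this to a triangulation cluster; then $\cS\cap C_{xx}$ is infinite.

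The main obstacle is the first paragraph: verifying that the irreducible maps are exactly the pivots and that the components have precisely the shapes $\mathbb Z A_\infty$ and $\mathbb Z A_\infty^\infty$ requires either a direct computation in the Frobenius category $\cF_\varphi(W)$ or an appeal to the AR theory of \cite{IT15a}; the gap-invariance of the endpoints is the clean combinatorial heart of this. Once the AR structure is in hand, Steps two and three are immediate from the convergence criterion, and the only remaining subtlety is in the last step, namely checking that the two candidate configurations for $C_{xx}$ really are \emph{maximal} noncrossing and genuinely satisfy Condition (2) of Theorem \ref{triangulationtheorem}, which is where I would spend the most care.
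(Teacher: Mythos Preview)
Your proof is correct and follows essentially the same route as the paper: the paper also describes $C_{xy}$ as the set of objects with endpoints in the gaps $(x,x^+)$ and $(y,y^+)$, declares the component types ``clear,'' uses the convergence criterion (Theorem \ref{triangulationtheorem}) to force finiteness when $y\neq x,x^\pm$, and invokes Corollary \ref{cor: each limit point of triangulation cluster} for the infinitude in the consecutive case. The only differences are that you supply the pivot-map justification for the AR structure that the paper omits, and you construct the two $C_{xx}$ examples directly, whereas the paper cites \cite[Fig.~1]{IT15a} for the finite case and Figure \ref{not2} (right side) for the infinite case --- the latter citation is actually to a \emph{non}-triangulation cluster, so your explicit zig-zag construction is cleaner here.
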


\begin{proof}
$C_{xy}$ is the collection of all objects $(a,b)$ where $x<a<x^+$ and $y<b<y^+$. It is clear that each $C_{xx}$ has type $\ZZ A_\infty$ and each $C_{xy}$ for $x\neq y$ has type $\ZZ A_\infty^\infty$.

Take a triangulation cluster $\cS$. If $C_{xy}$ contains an infinite number of objects of $\cS$ then there would be an infinite subset containing a sequence with one end converging to $x$ or $x^+$ and the other end converging to $y$ or $y^+$. By the theorem the two limits must be equal and this is possible only if $y=x,x^+$, or $x^-$. Thus $C_{xy}$ must be finite in all other cases.

For $y=x^-$, the corollary above implies that any triangulation cluster $\cS$ contains an infinite sequence of objects converging to $x$ from both sides. This is an infinite set in $C_{xx^-}$ for every $x\in Z$. Finally, for $C_{xx}$, the example in \cite[Fig 1]{IT15a} shows that $\cS\cap C_{xx}$ can be empty for all $x$ and Figure \ref{not2} (right side) gives an example showing that $\cS\cap C_{xx}$ (consisting of the $S_i$s and $T_j$s) can be infinite for all $x$.
\end{proof}

\subsection{Non-triangulation clusters and `cactus' cyclic posets} \label{non-triangulation}It follows from Theorem \ref{triangulationtheorem} that if a cluster $\cS$ is not a triangulation cluster then there exist a sequence of objects $E(x_i,y_i)$ in $\cS$ such that $x_i$ converge to a point $w\in S^1$ and $y_i$ converge to a point $w'\in S^1$ where $w\neq w'$. In this section we use certain equivalence relations in order to define new cyclic posets such that locally finite clusters correspond  to triangulation clusters.

Notice that if the admissible subset is finite, there are no limit points and therefore all clusters are triangulation clusters. So, for this subsection, we will only consider infinite subsets of $S^1$.
Due to the fact that non-triangulation clusters are  difficult to deal with, we consider only the case when the number of limit points of the admissible set is finite.
Recall from the Remark \ref{L(Zinfty)}(2) that in the case of infinite discrete admissible subset of $S^1$ with finitely many limit points, the set is of the form $Z(Z_{\infty})$ where $Z$ is the finite set of all limit points of $Z(Z_{\infty})$.

\begin{rem} \label{Vk is isomorphic to admissible} Let $Z(Z_{\infty}) \subset S^1$ be an  admissible subset, where $Z$ is finite. Consider $Z$ as a cyclic poset. For each $z\in Z$ let $z^+\in Z$ be the successor element in $Z$. Then:
\begin{enumerate}
\item The sets $Z$ and $Z(Z_{\infty})$ are disjoint subsets of $S^1$.
\item We define \emph{$Z_\infty$-interval} to be the set of points in $Z(Z_\infty)$ between $z$ and $z^+$. Note that $Z(Z_\infty)$ is the disjoint union of these $Z_{\infty}$-intervals, one for each $z\in Z$.  Also, $z$ is the descending limit of the points in the $Z_{\infty}$-interval corresponding to $z$.
\end{enumerate}
\end{rem}

\begin{rem}\label{modification} We will sometimes (e.g. in the proof of Lemma \ref{cactus first lemma}) need to modify the admissible set $Z(Z_\infty)\cong Z\ast\ZZ$ by deleting one or more $Z_\infty$-intervals. In that case, the resulting cyclic poset, call it $V$, will still be isomorphic to an admissible set, namely $V\cong Z_V\ast\ZZ$ where $Z_V$ is the set of all $z\in Z$ which are descending limits of elements of $V$. However, as a subset of $S^1$, such sets $V$ will not be admissible. For example, if $V$ is obtained by deleting from $Z(Z_\infty)$ the $Z_\infty$-interval between $z$ and $z^+$, as shown in Figure \ref{Fig: Pinch on boundary}, $V\cong Z_V\ast\ZZ$ where $Z_V=Z\backslash\{z\}$ but the points $z$ and $z^+$ will both be one-sided limit points of $V$. We remedy this with a topological trick: We collapse to a point the set $I(z,z^+)$ of all points in $S^1$ from $z$ to $z^+$ (shown in blue in Figure \ref{Fig: Pinch on boundary}). This means identify all of these point to one point and take the quotient topology on the result. This quotient space is still homeomorphic to a circle and the image of $V$ will be an admissible subset since $z$ and $z^+$ will be identified to one point.

If we take the closed $2$-disk $D^2$ with boundary the circle $S^1$ and identify $I(z,z^+)$ to one point we will get a space homeomorphic to $D^2$. We can do this to each $Z_\infty$-interval being deleted from the set $Z(Z_\infty)$ by iterating this process. 
\end{rem}

\begin{figure}[htbp]
\begin{center}
\begin{tikzpicture}
\begin{scope}[rotate=-90] 
\draw[thick] (0,0) circle [radius=2cm];
\begin{scope}
\foreach \x in {2,4, 10, 20, 35, 55, 70, 80,86,88}
\draw[thick,rotate=\x] (1.85,0)--(2.15,0);
\end{scope}
\begin{scope}[rotate=-90]
\foreach \x in {2,4, 10, 20, 35, 55, 70, 80,86,88}
\draw[thick,rotate=\x] (1.85,0)--(2.15,0);
\end{scope}
\begin{scope}[rotate=-180]
\foreach \x in {2,4, 10, 20, 35, 55, 70, 80,86,88}
\draw[thick,rotate=\x] (1.85,0)--(2.15,0);
\end{scope}
\end{scope}
\begin{scope}
\draw[very thick, color=blue] (1.8,0)--(2.2,0) (2.4,0) node{$z_1$};
\end{scope}
\begin{scope}[rotate=180]
\draw[very thick, color=red!90!black] (1.8,0)--(2.2,0) (2.4,0) node{$z_3$};
\end{scope}
\begin{scope}[rotate=-90]
\draw[very thick, color=red!90!black] (1.8,0)--(2.2,0) (2.4,0) node{$z_4$};
\end{scope}
\begin{scope}[rotate=90]
\draw[very thick, color=blue] (1.8,0)--(2.2,0) (2.4,0) node{$z_2=z_1^+$};
\end{scope}
\begin{scope}[rotate=45]
\clip (1,-1.4) rectangle (2.2,1.4);
\draw[very thick,color=blue] circle[radius=2cm];
\end{scope} 

\begin{scope} 
\draw (0,-3) node{$V\cong Z_V\ast\ZZ$ where $Z_V=\{z_2,z_3,z_4\}$};
\end{scope}


%
\draw[very thick, ->] (3,0)--(4,0) ;
\draw (3.5,.3) node{$p$};
\begin{scope}[xshift=7cm] 
\draw[thick] (0,0) circle [radius=2cm];

\begin{scope} 
\draw (0,-3) node{$p(V)$ is admissible.};
\end{scope}

\begin{scope}[rotate=45]
\foreach \x in {4, 7, 14, 26, 46, 68, 90, 111,123,130,133}
\draw[thick,rotate=\x] (1.85,0)--(2.15,0);
\end{scope}
\begin{scope}[rotate=45]
\draw[fill,color=blue] (2,0) circle[radius=3pt];
\end{scope}
\begin{scope}[rotate=-92]
\foreach \x in {4, 7, 14, 26, 46, 68, 90, 111,123,130,133}
\draw[thick,rotate=\x] (1.85,0)--(2.15,0);
\end{scope}
\begin{scope}[rotate=-180]
\foreach \x in {2,4, 10, 20, 35, 55, 70, 80,86,88}
\draw[thick,rotate=\x] (1.85,0)--(2.15,0);
\end{scope}
\begin{scope}[rotate=180]
\draw[very thick, color=red!90!black] (1.8,0)--(2.2,0) (2.4,0) node{$z_3$};
\end{scope}
\begin{scope}[rotate=-90]
\draw[very thick, color=red!90!black] (1.8,0)--(2.2,0) (2.4,0) node{$z_4$};
\end{scope}
\end{scope}
\end{tikzpicture}
\caption{When $Z_\infty$ intervals are deleted, we `pinch' the disk by collapsing to a separate point each portion of its boundary (shown in blue) where the $Z_\infty$-intervals have been removed.}
\label{Fig: Pinch on boundary}
\end{center}
\end{figure}
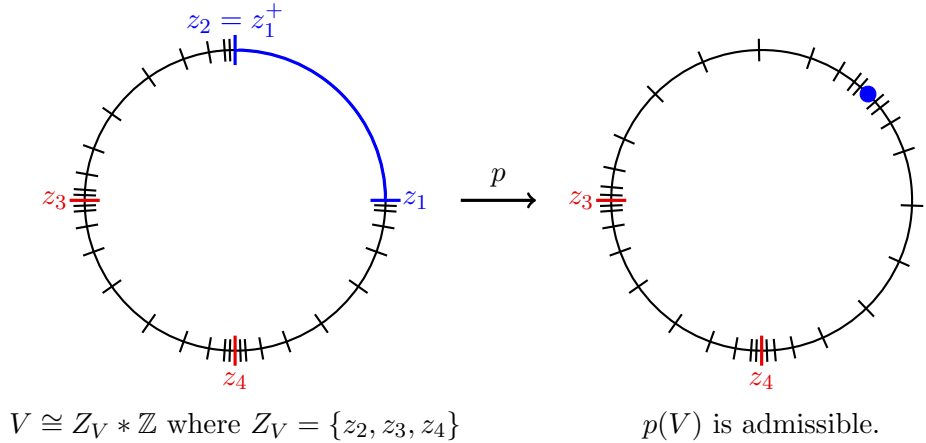

When a locally finite cluster is not a triangulation cluster, it gives a triangulation of an appropriate space: the ``cactus space'' $D_{\rho}$ (Definition \ref{cactus space}) minus a finite set. To construct the cactus space we introduce a noncrossing relation $\rho$ on $Z$ and the induced  $\rho$-noncrossing relation on the cyclic poset $Z(Z_\infty)$ (Definition \ref{rho on Z(Zinfty)}). With that, we define the cactus cyclic poset $Z(Z_\infty)/\rho$ (Definition \ref{cactus cyclic poset}), the cactus cluster category $\cC_{can}^{\rho}(Z(Z\infty))$ (Definition \ref{cactus cluster category}) and the cactus space $D_{\rho}$ (Definition \ref{cactus space}).

 \begin{defn}
Let $Z$ be a  subset of $S^1$. An equivalence relation $\rho$ on $Z$ is called \emph{noncrossing} if, whenever $a,b,c,d\in Z$ are in cyclic order and $a\!\sim_{\!\rho}\!c,\  b\!\sim_{\!\rho}\!d$, then\\
 $a\!\sim_{\!\rho}\!b\!\sim_{\!\rho}\!c\!\sim_{\!\rho}\!d$.
\end{defn}

\begin{defn}\label{rho on Z(Zinfty)}
Let $\rho$ be a noncrossing equivalence relation on  an admissible subset $Z$ of $S^1$. We say that two points $x,y\in Z(Z_\infty)$ are \emph{$\rho$-noncrossing} if 
the geodesic $\widehat{xy}$ does not cross any geodesic $\widehat{ww'}$ where  $w\!\sim_{\rho}\!w'$.

\end{defn}

\begin{eg}\label{eg: noncrossing}
In the figure below, $Z=\{a,b,c,d,e,f\}\subset S^1$ has noncrossing equivalence relation $\rho$ given by $a\sim d$, $d\sim e\sim f$ with $b$ in its own equivalence class. This equivalence relation is noncrossing since the four (dashed) geodesics $\widehat{ac},\widehat{de},\widehat{ef},\widehat{df}$ connecting equivalent points do not cross. An example of a ``crossing'' equivalence relation on $Z$ would be given by $a\sim c$ and $b\sim f$. 

\begin{center}
\begin{tikzpicture}[scale=.75]
\begin{scope}
\draw[thick] (0,0) circle [radius=3cm];
\foreach \x/\xtext in {5/z_2,65/y_3,115/x_1,130/x_2,175/x_3,-125/y_1,-110/y_2,-55/z_1}
\draw[thick, rotate=\x] (2.9,0)--(3.1,0) (3.5,0) node {$\xtext$};
\foreach \y/\ytext in {90/a,150/b,-150/c,-90/d,-30/e,30/f}
\draw[very thick, rotate=\y,fill]  
(3.5,0) node{$\ytext$} (3,0) circle[radius=2pt];
\clip (0,0) circle [radius=3cm];
\foreach \x in {0,-60}
\dcluster{\x}{3.46}{1.73};
\dcluster{-30}{6}{5.2};
\dcluster{150}{6}{5.2};
\draw[rotate=-30] (0.5,0) node{$\rho$};
\draw[rotate=150] (1.1,0) node{$\rho$};
\draw[rotate=0] (2.1,0) node{$\rho$};
\draw[rotate=-60] (2.1,0) node{$\rho$};
\end{scope}
\end{tikzpicture}
\end{center}

The points $x_1,x_2,x_3\in Z(Z_\infty)$ are pairwise $\rho$-noncrossing since the geodesics $\widehat{x_ix_j}$ do not cross the geodesics connecting $\rho$-equivalent points of $Z$. Similarly, the points $y_1,y_2,y_3$ are pairwise $\rho$-noncrossing. However, $z_1,z_2$ are $\rho$-crossing since $\widehat{z_1z_2}$ crosses the geodesic $\widehat{de}$.
\end{eg}

We also note that elements of the same $Z_\infty$-interval, such as $x_1,x_2$ and $y_1,y_2$ in Example \ref{eg: noncrossing} above, are $\rho$-noncrossing since there are no elements of $Z$ between them.

\begin{lem} Let  $\rho$ be a noncrossing equivalence relation on a finite subset $Z\subset S^1$. Then
$\rho$-noncrossing is an equivalence relation on $Z(Z_\infty)$ with finitely many equivalence classes.
\end{lem}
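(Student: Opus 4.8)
The plan is to reduce the relation to the cells of a finite geodesic arrangement. Let $\mathcal{A}$ be the collection of all closed geodesics $\widehat{ww'}$ with $w\sim_{\rho}w'$ and $w\neq w'$ in $Z$. Since $Z$ is finite, $\mathcal{A}$ is a finite set of geodesics (at most $\binom{|Z|}{2}$ of them). The essential point I would flag at the outset is that these geodesics need \emph{not} be pairwise noncrossing: two diagonals of a single $\rho$-class can cross, so one must resist arguing that $\mathcal{A}$ is a noncrossing family. Instead I would pass to the open regions $R_1,\dots,R_N$, the connected components of $D^2\setminus\bigcup\mathcal{A}$ (removing the open-disk part of each geodesic). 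There are finitely many because $\mathcal{A}$ is finite. The geometric fact I would record first is that each region $R_k$ is hyperbolically convex: it lies on a definite side of every geodesic of $\mathcal{A}$, hence is an intersection of hyperbolic half-planes, and is therefore convex.

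Next I would attach to each $x\in Z(Z_\infty)$ a well-defined region $R(x)$. Because $Z(Z_\infty)\cap Z=\emptyset$, the point $x$ lies in the interior of a boundary arc of $S^1$ containing no endpoint of any geodesic in $\mathcal{A}$; hence a small half-disk neighborhood of $x$ meets only one region, and I set $R(x)$ to be that region. The heart of the argument is the identification: two points $x,y\in Z(Z_\infty)$ are $\rho$-noncrossing if and only if $R(x)=R(y)$. For the forward direction, if $\widehat{xy}$ crosses no member of $\mathcal{A}$, then its open part is connected and disjoint from every geodesic in $\mathcal{A}$ (here I use that $x,y\notin Z$, so $\widehat{xy}$ shares no endpoint with any chord, and that two distinct geodesics that meet must cross, per the notion of crossing in Remark~\ref{noncrossing}); so it lies in a single component, which must be $R(x)=R(y)$. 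For the converse, if $R(x)=R(y)=:R$, then $x,y$ are ideal boundary points of the convex set $\overline{R}$, so $\widehat{xy}\subseteq\overline{R}$; since each geodesic of $\mathcal{A}$ bounding $R$ lies on $\partial R$, a transversal crossing with $\widehat{xy}$ would push $\widehat{xy}$ out of $R$, a contradiction, so $x,y$ are $\rho$-noncrossing.

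Finally I would conclude directly. The displayed identification exhibits the $\rho$-noncrossing relation as the fibre relation of the map $x\mapsto R(x)$ from $Z(Z_\infty)$ to the finite set $\{R_1,\dots,R_N\}$; as the pullback of equality it is automatically reflexive, symmetric and transitive, and its number of classes is at most $N<\infty$. (Reflexivity is the degenerate case, where $\widehat{xx}$ is the single point $x\notin Z$, meeting no chord.) I expect the main obstacle to be the region bookkeeping rather than any estimate: one must avoid treating $\mathcal{A}$ as noncrossing and instead work with the cells of the arrangement, whose convexity is precisely what forces transitivity. The hypothesis that $\rho$ is noncrossing is convenient for the later geometric picture (the convex hulls of distinct classes are then disjoint), but the present statement follows from the cell decomposition alone.
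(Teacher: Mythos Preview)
Your proof is correct and follows essentially the same approach as the paper: both identify the $\rho$-noncrossing classes with the connected components of $D^2$ minus the geodesics $\widehat{ww'}$, $w\sim_\rho w'$. The paper's proof is a two-sentence sketch asserting that points are $\rho$-noncrossing if and only if they lie in the same component; you supply the missing details, in particular the convexity of the cells and the well-definedness of $R(x)$ for $x\notin Z$. Your remark that geodesics within a single $\rho$-class may cross is a genuine subtlety the paper does not address (its Example~4.22 happens to have only noncrossing geodesics), and your observation that the noncrossing hypothesis on $\rho$ is not actually used in this lemma is also correct.
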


\begin{proof}
The (finitely many) geodesics connecting distinct equivalent points in $Z$ divide the disk $D^2$ into finitely many connected components.
These components are in bijection with the $\rho$-noncrossing equivalence classes in $Z(Z_\infty)$ since $x,y \in Z(Z_\infty)$ are $\rho$-noncrossing if and only if they lie in the same component of $D^2$. (Example \ref{eg: noncrossing} has 4 components.)
\end{proof}

\begin{defn}\label{cactus cyclic poset}
Let $Z\subset S^1$ be a finite subset of the circle. Let $\rho$ be a noncrossing equivalence relation on $Z$. Then we define the \emph{cactus cyclic poset} $Z(Z_\infty)/\rho$ to be the  cyclic poset with the same underlying set $Z(Z_\infty)$ but with the new cyclic cocycle $c_\rho$ given as: 
define the function $B:Z(Z_\infty)^2\to \NN$ by $B(x,y)=0$ if $x,y$ are $\rho$-noncrossing and $B(x,y)=1$ otherwise. Then the cyclic poset cocycle $c_\rho$ on $Z(Z_\infty)$ is given by
\[
	c_\rho(x,y,z)=c(x,y,z)+B(x,y)+B(y,z)-B(x,z).
\]
It is easy to see that this is a reduced cocycle on $Z(Z_\infty)$. 
\end{defn}

\begin{lem}\label{cactus first lemma} Let $\rho$ be a noncrossing equivalence relation on  a finite subset $Z$ of $S^1$.
\begin{enumerate}
\item Let $V_k$ be a $\rho$-noncrossing equivalence class in $Z(Z_\infty)/\rho$. Then the cluster category of $V_k$ embeds as a full subcategory of both $\cC_{can}(Z(Z_\infty))$ and $\cC_{can}(Z(Z_\infty)/\rho)$. 
\item If $X\in \cC_{can}(V_j)$ and $Y\in \cC_{can}(V_k)$ where $V_j,V_k$ are distinct $\rho$-noncrossing equivalence classes then $\Hom_\cC(X,Y)=0=\Hom_\cC(Y,X)$ where $\cC=\cC_{can}(Z(Z_\infty))$.
\end{enumerate}
\end{lem}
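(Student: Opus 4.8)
The plan is to handle the two statements separately, after first realizing each class $V_k$ as an admissible set in its own right. I would begin by noting that $V_k$ is a union of entire $Z_\infty$-intervals: a separating geodesic $\widehat{ww'}$ with $w\sim_\rho w'$ has both endpoints in $Z$, and no element of $Z$ lies strictly inside a $Z_\infty$-interval, so no such geodesic can split an interval. Deleting the intervals lying outside the component $C_k$ and pinching as in Remark \ref{modification} exhibits $(V_k, c|_{V_k})$ (Remark \ref{subset}) as an admissible set, and hence defines $\cC_{can}(V_k)$. The decisive bookkeeping fact is that for every $x\in V_k$ the successor $x^+$ and predecessor $x^-$ computed in $Z(Z_\infty)$ already lie in $V_k$, since each is simply the adjacent point of $x$'s own interval. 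Thus the canonical automorphism of $V_k$ is the restriction of that of $Z(Z_\infty)$. Finally, because all pairs of points of $V_k$ are $\rho$-noncrossing, the correction term $B$ of Definition \ref{cactus cyclic poset} vanishes on triples from $V_k$, so $c_\rho = c$ on $V_k$; this is exactly what will let one argument serve for both $\cC_{can}(Z(Z_\infty))$ and $\cC_{can}(Z(Z_\infty)/\rho)$.

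For part (1) I would show that the stable Hom-space between any two objects with all endpoints in $V_k$ is computed identically in $\cC_{can}(V_k)$ and in the ambient category. The point is that the injective envelope of $E(x,y)$ is a direct sum of indecomposable projective--injectives whose endpoints lie among $x, x^\pm, y, y^\pm$, all of which are in $V_k$ by the previous paragraph. Hence the injective envelope of a $V_k$-object in the large Frobenius category coincides with its injective envelope in $\cF_\varphi(V_k)$, so a morphism between $V_k$-objects factors through a projective--injective in the small category if and only if it does so in the large one. This makes the evident functor $E(x,y)\mapsto E(x,y)$ full and faithful, and the same short exact sequences show that it takes distinguished triangles to distinguished triangles, giving a full embedding of triangulated categories. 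Since $c_\rho = c$ on $V_k$, the identical argument yields the embedding into $\cC_{can}(Z(Z_\infty)/\rho)$.

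For part (2) I would combine the noncrossing-equals-compatible principle (Remark \ref{compatible/noncrossing}) with the description of morphisms in $\cC_{can}$ as bounded counterclockwise rotations of arcs. Because $X$ and $Y$ lie in distinct components $C_j, C_k$, their arcs do not cross and are separated by a geodesic $\widehat{ww'}$ with $w\sim_\rho w'$; by Remark \ref{L(Zinfty)}(2) the points $w,w'$ are limit points of $Z(Z_\infty)$. A nonzero map $X\to Y$ would rotate the arc of $X$ counterclockwise onto that of $Y$, so an endpoint would have to travel along $S^1$ from the boundary of $C_j$ to the boundary of $C_k$, and hence past one of $w, w'$. Since infinitely many points of $Z(Z_\infty)$ accumulate at each limit point, such a map corresponds to a composite of structure morphisms $f_{\bullet\bullet}$ divisible by arbitrarily high powers of $t$, and therefore vanishes in $R=k[[t]]$. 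This gives $\Hom_\cC(X,Y)=0$; the clockwise rotation argument gives $\Hom_\cC(Y,X)=0$.

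The main obstacle is making part (2) rigorous, i.e. converting the slogan ``rotation past a limit point kills the morphism'' into an honest $t$-divisibility estimate. The cleanest route is probably to bypass the rotation language entirely and show directly, from the cocycle, that every morphism in the Frobenius category between a $C_j$-object and a $C_k$-object has each of its components $P_a\to P_b$ divisible by $t^n$ for every $n$, using that the cocycle value across the separating pair $w,w'$ is forced to grow without bound by the accumulation of $Z(Z_\infty)$ at these limit points; such a morphism is then already zero in $\cF(Z(Z_\infty))$. A secondary point to verify with care in the setup is that the pinching of Remark \ref{modification} does not change which triples of $V_k$ complete the circle, so that $(V_k, c|_{V_k})$ really is the cyclic poset of the pinched admissible set.
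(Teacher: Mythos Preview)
Your treatment of part (1) is correct and matches the paper's approach, though you supply more detail: the paper simply notes that $B$ vanishes on $V_k$ so $c_\rho=c$ there, observes that $z,z^+$ are $\rho$-noncrossing so $V_k$ is invariant under the canonical automorphism, and concludes. Your remark about injective envelopes is a sound way to make the full faithfulness explicit.

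For part (2), however, there is a genuine gap. Your proposed mechanism---that a morphism ``passing a limit point'' picks up arbitrarily high powers of $t$---is mistaken. The basic morphism $f_{xa}:P_x\to P_a$ is a \emph{generator} of $\Hom(P_x,P_a)\cong R$, not a composite; it carries no intrinsic $t$-divisibility regardless of how many points of $Z(Z_\infty)$ lie between $x$ and $a$. Indeed, if $x=z_0,z_1,\dots,z_n=a$ are in cyclic order then the composite $f_{z_{n-1}a}\circ\cdots\circ f_{xz_1}$ equals $t^0 f_{xa}$ since none of the triples completes the circle. So morphisms in the Frobenius category between $V_j$- and $V_k$-objects are \emph{not} zero; what must be shown is that they factor through projective-injectives. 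The limit points are a red herring: the Hom-orthogonality of $E(x,y)$ and $E(a,b)$ holds whenever the closed geodesics $\widehat{xy}$ and $\widehat{ab}$ are disjoint, and this is already true in the finite category $\cC_\varphi(Z_n)$ where there are no limit points at all.

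The paper disposes of part (2) in one line by citing Lemma~5.1 of \cite{IT13}: disjoint closed geodesics give Hom-orthogonal objects in the stable category. The underlying argument is a direct cocycle computation rather than anything about accumulation. After a symmetry one may take $x,y,a,b$ in cyclic order; then $c(x,y,a)=c(x,y,b)=0$, so the basic maps $P_x\to P_a$ and $P_x\to P_b$ factor through $P_y$, and hence any morphism $E(x,y)\to E(a,b)$ factors through a projective-injective built from $P_y$ and vanishes in the stable category. The paper in fact spells out exactly this computation (for the modified cocycle $c_\rho$) later, in the proof of Theorem~\ref{thm: cactus category= prod C(Vk)}. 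This cocycle argument is what should replace your rotation/divisibility heuristic.
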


\begin{proof} (1) The subset $V_k\subset S^1$ can be modified to an admissible subset of $S^1$ by "pinching" $Z_{\infty}$ intervals of $Z(Z_\infty)$ which are in the other $\rho$-noncrossing equivalence classes, as done in the Remark \ref{modification}.
The cyclic poset $V_k$ is isomorphic to an admissible subset of $S^1$. So, the cluster category $\cC_{can}(V_k)$ is defined.
If $x,y,z\in V_k$ then $B(x,y),B(y,z),B(x,z)$ are all zero by definition. So, $c_\rho(x,y,z)=c(x,y,z)$ and the cyclic poset structure on $V_k\subseteq Z(Z_\infty)/\rho$ is the same as the one induced by the inclusion $V_k\subseteq Z(Z_\infty)$. Also, $z,z^+$ are $\rho$-noncrossing since there is no element of $Z$ between them. Therefore, each $\rho$-noncrossing equivalence class $V_k$ is invariant under the canonical automorphism of $Z(Z_\infty)$. This makes $\cC_{can}(V_k)$ a full subcategory of both $\cC_{can}(Z(Z_\infty))$ and $\cC_{can}(Z(Z_\infty)/\rho)$.

(2) To see the hom-orthogonality, let $x,y\in V_j$ and $a,b\in V_k$ where $V_j\neq V_k$. Then $\widehat{xy}$ and $\widehat{ab}$ do not meet even at their boundary points. So, $E(x,y)$ and $E(a,b)$ are hom-orthogonal in $\cC=\cC_{can}(Z(Z_\infty))$ (Lemma 5.1,  \cite{IT13}).
\end{proof}

This immediately implies the following.
\begin{defn}\label{cactus cluster category} We define \emph{cactus cluster category} to be $\cC_{can}^\rho(Z(Z_\infty))$, the additive full subcategory of  $\cC_{can}(Z(Z_\infty))$ with indecomposable objects $E(x,y)$ where $x,y$ are $\rho$-noncrossing.
\end{defn}
\begin{prop}\label{prop: cactus as full subcategory}
Let  $\cC_{can}^\rho(Z(Z_\infty))$ be the additive full subcategory of  $\cC_{can}(Z(Z_\infty))$ with indecomposable objects $E(x,y)$ where $x,y$ are $\rho$-noncrossing.
Let  $\{V_k\}$ be the $\rho$-noncrossing equivalence classes in $Z(Z_\infty)/\rho$. Then 
\[
	\cC_{can}^\rho(Z(Z_\infty))\cong \smallprod_{\!k} \ \cC_{can}(V_k).
\]
\end{prop}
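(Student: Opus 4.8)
The plan is to build an explicit additive equivalence $\Phi\colon \cC_{can}^\rho(Z(Z_\infty))\to \smallprod_k \cC_{can}(V_k)$ out of the block structure furnished by Lemma \ref{cactus first lemma}. First I would pin down the indecomposable objects. By Definition \ref{cactus cluster category} the indecomposables of $\cC_{can}^\rho(Z(Z_\infty))$ are exactly the $E(x,y)$ with $x,y$ $\rho$-noncrossing, and by the lemma immediately preceding Definition \ref{cactus cyclic poset} two points are $\rho$-noncrossing precisely when they lie in a common $\rho$-noncrossing equivalence class $V_k$ (they lie in the same connected component of $D^2$). Hence every such indecomposable $E(x,y)$ lies in exactly one $\cC_{can}(V_k)$, which by Lemma \ref{cactus first lemma}(1) is a full subcategory of $\cC_{can}^\rho(Z(Z_\infty))$. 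Thus the indecomposables of the cactus category are the disjoint union, over $k$, of the indecomposables of the $\cC_{can}(V_k)$.

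Next I would assemble the functor. These categories are Krull--Schmidt, as required by Definition \ref{Cluster structures}, so each object $M$ has an essentially unique decomposition into indecomposables; grouping the summands according to their class yields $M\cong\bigoplus_k M_k$ with $M_k\in\cC_{can}(V_k)$, and I set $\Phi(M):=(M_k)_k$. For morphisms, Lemma \ref{cactus first lemma}(2) gives $\Hom_\cC(X,Y)=0=\Hom_\cC(Y,X)$ whenever $X\in\cC_{can}(V_j)$ and $Y\in\cC_{can}(V_k)$ with $j\neq k$, where $\cC=\cC_{can}(Z(Z_\infty))$. Since $\cC_{can}^\rho(Z(Z_\infty))$ is a \emph{full} subcategory of $\cC$ these Hom-spaces are inherited, so by additivity of $\Hom$ over the finite direct sums $M=\bigoplus_k M_k$, $N=\bigoplus_k N_k$ one gets $\Hom(M,N)=\smallprod_k\Hom(M_k,N_k)$. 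Any $f\colon M\to N$ therefore decomposes as $f=(f_k)_k$ with $f_k\colon M_k\to N_k$, and I set $\Phi(f):=(f_k)_k$; compatibility with composition is immediate from the same orthogonality.

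Finally I would verify that $\Phi$ is an equivalence: it is additive by construction, full and faithful because of the Hom-space identity just displayed, and essentially surjective because $(M_k)_k\mapsto\bigoplus_k M_k$ provides a preimage (the product is finite, by the lemma preceding Definition \ref{cactus cyclic poset}, so finite direct sums suffice). The argument is essentially bookkeeping once Lemma \ref{cactus first lemma} is in hand; I expect the only steps needing care to be the well-definedness of the block decomposition $M\cong\bigoplus_k M_k$ up to isomorphism, which is exactly what the Krull--Schmidt property provides, and the promotion of the indecomposable Hom-orthogonality of Lemma \ref{cactus first lemma}(2) to the identity $\Hom(M,N)=\smallprod_k\Hom(M_k,N_k)$ for arbitrary objects, which follows by bilinearity of $\Hom$ over finite direct sums. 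The main obstacle is thus making these two reductions precise rather than overcoming any genuine difficulty.
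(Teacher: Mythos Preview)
Your proposal is correct and takes essentially the same approach as the paper, which simply records the proposition as an immediate consequence of Lemma~\ref{cactus first lemma} (the text preceding the proposition reads ``This immediately implies the following''). You have spelled out precisely the block-decomposition argument that the paper leaves implicit: the indecomposables partition according to the $V_k$, Lemma~\ref{cactus first lemma}(1) gives fullness on each block, and Lemma~\ref{cactus first lemma}(2) gives Hom-orthogonality between blocks, so the category splits as the finite product.
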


\begin{lem}\label{cactus second lemma}
If $x,y$ are in different $\rho$-noncrossing equivalence classes then the object $E(x,y)$ is not an object in the cluster category of the cactus cyclic poset $Z(Z_\infty)/\rho$. Thus, the only indecomposable objects of $\cC_{can}(Z(Z_\infty)/\rho)$ are $E(x,y)$ where $x,y$ are in the same $\rho$-noncrossing equivalence class and $y\neq x,x^+,x^-$.
\end{lem}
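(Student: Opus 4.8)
The plan is to reduce the entire statement to a single cocycle computation, by first isolating the exact condition under which a symbol $E(x,y)$ denotes an actual object. Recall from the definition of the factorization category that an indecomposable $E(x,y)=(P_x\oplus P_y,\varphi,\varphi^2=tI)$ is assembled from maps $\alpha\in\Hom(P_x,P_y)=Rf_{xy}$ and $\beta\in\Hom(P_y,P_x)=Rf_{yx}$ subject to $\beta\alpha=t\,\mathrm{id}_{P_x}$ and $\alpha\beta=t\,\mathrm{id}_{P_y}$. Writing $\alpha=a(t)f_{xy}$ and $\beta=b(t)f_{yx}$ and using that composition in the $t^{\NN}$-category contributes a factor $t^{c_W(x,y,x)}$, the first relation becomes $a(t)b(t)\,t^{c_W(x,y,x)}=t$, and the second becomes $a(t)b(t)\,t^{c_W(y,x,y)}=t$. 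Since $c_W$ is a reduced, nonnegative cocycle and $a,b\in R=k[[t]]$, such $a,b$ exist only if $c_W(x,y,x)=c_W(y,x,y)\le 1$. So the first step I would carry out is to record the criterion: \emph{$E(x,y)$ is an object of the factorization category of a cyclic poset $(W,c_W)$ exactly when $c_W(x,y,x)=1$} (the value will be odd below, so $\le 1$ forces $=1$).

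Next I would compute the cactus cocycle on the relevant triple. By Definition \ref{cactus cyclic poset}, $c_\rho(x,y,x)=c(x,y,x)+B(x,y)+B(y,x)-B(x,x)$. On $S^1$ we have $c(x,y,x)=1$ for all distinct $x,y$ (Example \ref{cocycle10}, since $x,y,x$ completes the circle); moreover $B$ is symmetric because $\rho$-noncrossing is symmetric in its two arguments, and $B(x,x)=0$. Hence
\[
	c_\rho(x,y,x)=1+2B(x,y),
\]
and the same value is obtained for $c_\rho(y,x,y)$. If $x,y$ lie in different $\rho$-noncrossing equivalence classes then they are $\rho$-crossing, so $B(x,y)=1$ and $c_\rho(x,y,x)=3>1$. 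By the criterion of the first paragraph, no object $E(x,y)$ can be formed in the factorization category of $Z(Z_\infty)/\rho$, hence none exists in $\cC_{can}(Z(Z_\infty)/\rho)$. This proves the first assertion.

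For the ``thus'' clause I would argue the complementary case: when $x,y$ lie in the same class, $B(x,y)=0$, so $c_\rho(x,y,x)=1$ and $E(x,y)$ is a genuine indecomposable (together with the $E(x,x)$ these exhaust the indecomposables of the factorization category). To descend to the stable category I must know the canonical automorphism of $Z(Z_\infty)/\rho$; by Lemma \ref{cactus first lemma}(1) the cactus cocycle agrees with $c$ on each class $V_k$, and since there is no point of $Z$ between $z$ and $z^+$ these are always $\rho$-noncrossing, so $z^+$ stays in the class of $z$. Thus the successor map $z\mapsto z^+$ preserves each $V_k$ and is the canonical automorphism $\varphi$, whence the objects with $y\in\{x,x^+,x^-\}$ are precisely the degenerate and projective–injective ones (cf. Example \ref{eg: C(Zn)}) and vanish in $\cC_{can}(Z(Z_\infty)/\rho)$. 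Removing these leaves exactly the $E(x,y)$ with $x,y$ in the same $\rho$-noncrossing class and $y\neq x,x^+,x^-$, as claimed. The hard part is the first step: pinning down that membership in the category is controlled by the single cocycle value $c_\rho(x,y,x)$ being $1$; once that is secured, the rest is the short computation $c_\rho(x,y,x)=1+2B(x,y)$ plus the bookkeeping of which objects are projective–injective.
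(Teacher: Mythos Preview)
Your proof is correct and follows essentially the same approach as the paper: compute $c_\rho(x,y,x)=c(x,y,x)+B(x,y)+B(y,x)-B(x,x)=1+2B(x,y)$, observe this equals $3$ when $x,y$ are $\rho$-crossing (so no matrix factorization of $t$ exists) and equals $1$ when they are $\rho$-noncrossing, then note that the canonical automorphism $z\mapsto z^+$ preserves each $\rho$-noncrossing class so the projective-injectives are exactly the $E(x,x^\pm)$. Your write-up is a bit more explicit than the paper's in first isolating the abstract criterion ``$E(x,y)$ exists iff $c_W(x,y,x)\le 1$'', but the substance is identical.
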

\begin{proof} For any two distinct points in $S^1$ we have $c(x,y,x)=1$ since going from $x$ to $y$ back to $x$ always makes a full circle. On the other hand, $B(x,y)=B(y,x)=1$ by definition. So,
\[
	c_\rho(x,y,x)=c(x,y,x)+B(x,y)+B(y,x)-B(x,x)=1+1+1-0=3
\]
which means that, for any two morphisms $\alpha:P_x\to P_y$ and $\beta:P_y\to P_x$ the composition $\beta\circ \alpha:P_x\to P_x$ is divisible by $t^3$. In particular there is no matrix factorization of $t$ (a choice of $\alpha,\beta$ so that $\beta\circ\alpha=t$). So, $E(x,y)$ does not exist as an element of the cluster category of the cactus cyclic poset $Z(Z_\infty)/\rho$.

If $x, y\in Z(Z_\infty)/\rho$ are in the same $\rho$-noncrossing equivalence class and $y\neq x,x^+,x^-$ then $c_\rho(x,y,x)=c(x,y,x)=1$ and $E(x,y)$ is a nonzero object of $\cC_{can}(Z(Z_\infty)/\rho)$. Since we have already excluded the other possible objects, these are the only indecomposable objects of the cactus cluster category $\cC_{can}(Z(Z_\infty)/\rho)$.
\end{proof}

For the next theorem we recall that a finite product of cluster categories is a cluster category and that a cluster in a product is given by a cluster in each factor.

\begin{thm}\label{thm: cactus category= prod C(Vk)}
The cluster category $\cC_{can}(Z(Z_\infty)/\rho)$ is isomorphic to the full subcategory $\cC_{can}^\rho(Z(Z_\infty))$ from Proposition \ref{prop: cactus as full subcategory} and, therefore, also isomorphic to the product of cluster categories $\cC_{can}(V_k)$ where $V_k$ are the $\rho$-noncrossing equivalence classes in $Z(Z_\infty)/\rho$:
\[
	\cC_{can}(Z(Z_\infty)/\rho)\cong \cC_{can}^\rho(Z(Z_\infty))\cong \smallprod_{\!k} \cC_{can}(V_k).
\]
\end{thm}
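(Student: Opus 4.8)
The plan is to reduce everything to the product decomposition already in hand and then establish the single remaining isomorphism. Since Proposition \ref{prop: cactus as full subcategory} gives $\cC_{can}^\rho(Z(Z_\infty))\cong \smallprod_k \cC_{can}(V_k)$, it suffices to produce an isomorphism of categories $\cC_{can}(Z(Z_\infty)/\rho)\cong \cC_{can}^\rho(Z(Z_\infty))$. I would take this functor to be the identity on the underlying sets of objects: by Lemma \ref{cactus second lemma} the indecomposable objects of $\cC_{can}(Z(Z_\infty)/\rho)$ are exactly the $E(x,y)$ with $x,y$ in a common $\rho$-noncrossing class and $y\neq x,x^+,x^-$, which is precisely the set of indecomposables of $\cC_{can}^\rho(Z(Z_\infty))$ singled out in Definition \ref{cactus cluster category}. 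So the functor is automatically a bijection on objects, and the whole content is to check it is full and faithful, i.e. an isomorphism on every $\Hom$-space.

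I would split the verification according to whether the two objects lie in the same $\rho$-noncrossing class, noting first that each indecomposable $E(x,y)$ has \emph{both} endpoints in a single class $V_k$, since $x,y$ being $\rho$-noncrossing forces them into the same component of $D^2$ cut out by the $\rho$-geodesics. For a pair of objects whose endpoints all lie in one class $V_k$, the cocycles agree: $B\equiv 0$ on $V_k$ and $V_k$ is invariant under the canonical automorphism (both noted in Lemma \ref{cactus first lemma}(1)), so $c_\rho=c$ on every triple of points, and their successors, that occurs in the relevant compositions and injective envelopes. Hence the factorization subcategories coincide and the stable $\Hom$-spaces are literally equal; by Lemma \ref{cactus first lemma}(1) both coincide with $\Hom_{\cC_{can}(V_k)}$, which is also the same-class $\Hom$-space appearing in the product of Proposition \ref{prop: cactus as full subcategory}.

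The essential remaining point, and the one I expect to be the main obstacle, is cross-class vanishing inside the cactus category: I must show $\Hom_{\cC_{can}(Z(Z_\infty)/\rho)}(E(x,y),E(a,b))=0$ whenever $\{x,y\}\subseteq V_j$ and $\{a,b\}\subseteq V_k$ with $j\neq k$ (the corresponding vanishing in $\cC_{can}^\rho(Z(Z_\infty))$ is already Lemma \ref{cactus first lemma}(2)). Here the computation from the proof of Lemma \ref{cactus second lemma} is the key input: for $x\in V_j$ and $a\in V_k$ the extra terms give $B(x,a)=B(a,x)=1$, so $c_\rho(x,a,x)=3$, and more generally the section-based offsets satisfy $b_\rho(x,a)=b(x,a)+1$ in each direction. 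I would translate this into the covering poset of the cactus cyclic poset, where it says that the lifts of $E(x,y)$ and $E(a,b)$ are pushed more than a full $\sigma$-period apart; consequently any matrix $f:P_x\oplus P_y\to P_a\oplus P_b$ commuting with the two copies of $\varphi$ has all entries divisible by enough powers of $t$ to factor through a projective-injective $E(z,z^+)$, and therefore becomes zero in the stable category.

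Finally, I would assemble the pieces: the identity-on-objects functor is a bijection on objects, an isomorphism on same-class $\Hom$-spaces, and sends the (vanishing) cross-class $\Hom$-spaces on one side to the (vanishing) ones on the other, so it is a full and faithful isomorphism of additive categories. Composing with Proposition \ref{prop: cactus as full subcategory} then yields the stated chain $\cC_{can}(Z(Z_\infty)/\rho)\cong \cC_{can}^\rho(Z(Z_\infty))\cong \smallprod_k \cC_{can}(V_k)$. The delicate bookkeeping is making the divisibility-forces-factorization argument precise and checking its compatibility with the stable quotient on both sides; everything else follows directly from the lemmas already proved.
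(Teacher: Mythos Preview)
Your overall architecture matches the paper's proof exactly: same objects via Lemma~\ref{cactus second lemma}, same-class $\Hom$ via Lemma~\ref{cactus first lemma}, and then cross-class vanishing in $\cC_{can}(Z(Z_\infty)/\rho)$ as the remaining step. The only substantive difference is how you handle that last step.

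You propose to use $c_\rho(x,a,x)=3$ (equivalently $b_\rho=b+B$) and a covering-poset argument that the lifts of $E(x,y)$ and $E(a,b)$ are pushed a full $\sigma$-period apart, forcing factorization through a projective-injective. This is plausible but, as you yourself note, the ``divisibility-forces-factorization'' bookkeeping is left imprecise. The paper instead exploits the geometry of noncrossing more directly: since $\{x,y\}\subset V_j$ and $\{a,b\}\subset V_k$ with $j\neq k$, the geodesics $\widehat{xy}$ and $\widehat{ab}$ are noncrossing, so after relabeling one may assume $x,y,a,b$ are in cyclic order; then a one-line computation gives
\[
c_\rho(x,y,a)=c(x,y,a)+B(x,y)+B(y,a)-B(x,a)=0+0+1-1=0,
\]
and similarly $c_\rho(x,y,b)=0$. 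This says the basic maps $P_x\to P_a$ and $P_x\to P_b$ factor through $P_y$, so any morphism $E(x,y)\to E(a,b)$ factors through a projective-injective and is zero in the stable category. This route avoids the covering-poset translation entirely and replaces your ``delicate bookkeeping'' with a single explicit cocycle value; you might prefer it.
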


\begin{proof}
By Lemma \ref{cactus second lemma} every object of $\cC_{can}(Z(Z_\infty)/\rho)$ is a direct sum $\bigoplus X_k$ where $X_k\in \cC_{can}(V_k)$. By Lemma \ref{cactus first lemma} $\cC_{can}(V_k)$ is embedded as a full subcategory of $\cC_{can}(Z(Z_\infty)/\rho)$. So, $\cC_{can}(Z(Z_\infty)/\rho)$ and $\cC_{can}^\rho(Z(Z_\infty))$ have the same set of objects. It only remains to show that $\Hom(X,Y)=0$ in $\cC_{can}(Z(Z_\infty)/\rho)$ if $X\in \cC_{can}(V_j)$ and $Y\in \cC_{can}(V_k)$ where $j\neq k$.

Let $E(x,y)\in \cC_{can}(V_j)$ and $E(a,b)\in \cC_{can}(V_j)$ where $j\neq k$. Then $\widehat{xy}, \widehat{ab}$ are noncrossing and, by symmetry, we may assume that $x,y,a,b$ are in cyclic order in $S^1$. This implies that
\[
	c_\rho(x,y,a)=c(x,y,a)+B(x,y)+B(y,a)-B(x,a)=0+0+1-1=0.
\]
Similarly $c_\rho(x,y,b)=0$. So, every morphism $P_x\to P_a\oplus P_b$ factors through $P_y$. So, any map $E(x,y)\to E(a,b)=P_a\oplus P_b$ factors through $E(y,y)=0$ showing that $\Hom(E(x,y),E(a,b))=0$ in $\cC_{can}(Z(Z_\infty)/\rho)$ just as it is in $\cC_{can}(Z(Z_\infty))$. So, $\cC_{can}(Z(Z_\infty)/\rho)$ embeds as a full subcategory of $\cC_{can}(Z(Z_\infty))$ as claimed.
\end{proof}

The purpose of the above equivalence considerations, was to deal with locally finite clusters which are not triangulation clusters in $\cC_{can}(Z(Z_\infty))$, i.e. do not define triangulation of $D^2\backslash Z$ (in this subsection we only deal with the case of discrete admissible sets which have only finitely many limit points, hence are of the form 
$Z(Z_\infty)$ with a finite $Z\subset S^1$.

\begin{defn} Let $\cS$ be a locally finite cluster in the category $\cC_{can}(Z(Z_\infty))$. Define an equivalence relation $\rho_{\cS}$ on the set $Z$ to be generated by $w\sim w'$ if there is a sequence of objects $E(x_i,y_i)$ in $\cS$ with $x_i$ converging to $w$ and $y_i$ converging to $w'$. 
\end{defn}

Let  $Z(Z_\infty)/\rho_{\cS}$  be the  associated cactus cyclic poset and let $\cC_{can} (Z(Z_\infty)/\rho_{\cS})$ be the associated triangulated category (see Figure \ref{FigZ10}). 

\begin{figure}[htbp]
\begin{center}
\begin{tikzpicture}
\begin{scope}
\foreach \x in {182, 185, 189, 194,  201, 206, 210, 213, 215, 217, 220, 224, 229, 235,242, 249, 256, 261, 265, 268,92, 95, 99, 104, 111, 116, 120, 123,127, 130, 134, 81, 85, 88,139, 144, 151, 159, 166,171, 175,178,-57, 57, -60,60, -64,64, -69,69,76, -76, -81, -85, -88, 37, 40, 45, 50,53,-37, -40, -45, -50,-53, 2,3,6,10, 25, 29, 32,33,-2,-3,-6,-10,-18,18, -25, -29, -32, -35,-33}
\draw[thick,rotate= \x] (3.2,0)--(3,0);
\begin{scope}
\clip (0,0) circle[radius=3cm];
\draw[fill,color=gray!20!white] (3.66,0) circle[radius=2.1cm];
\draw[fill,color=white, rotate=17.5] (3.15,0) circle[radius=.95cm];
\draw[fill,color=white, rotate=-17.5] (3.15,0) circle[radius=.95cm];
\end{scope}
\draw[rotate=235] (3.6,0) node{$D_1$};
\draw[rotate=108] (3.6,0) node{$D_3$};
\draw[rotate=-45] (3.6,0) node{$D_4$};
\draw[rotate=17] (3.6,0) node{$D_5$};
\draw[rotate=-17] (3.6,0) node{$D_6$};
\draw[rotate=215] (3.5,0) node{$w_1$};
\draw[rotate=270] (3.5,0) node{$w_3$};
\draw[rotate=180] (3.5,0) node{$w_2$};
\draw[rotate=125] (3.5,0) node{$w_4$};
\draw[rotate=90] (3.5,0) node{$w_5$};
\draw[rotate=55] (3.5,0) node{$w_6$};
\draw[rotate=-55] (3.5,0) node{$w_7$};
\draw[rotate=35] (3.5,0) node{$w_8$};
\draw[rotate=-35] (3.5,0) node{$w_9$};
\draw[rotate=0] (3.6,0) node{$w_{10}$};
\draw[rotate=0] (2,0) node{$X$};
\foreach\x in {-55, -35, 0, 35, 55, 90, 125, 180, 215,270}
\draw[thick,rotate= \x] (3.2,0)--(3,0);
\foreach\x in {-55, -35, 0, 35, 55, 90,125, 180, 215, 270}
\draw[fill,rotate= \x] (3,0) circle[radius=3pt];
\draw[thick] (0,0) circle[radius=3cm];
\clip (0,0) circle[radius=3cm];
\lcluster{200}{3.02}{.32};
\lcluster{202}{3.03}{.42};
\lcluster{213.5}{3.01}{.18};
\lcluster{215}{3.01}{.26};
\lcluster{217}{3.02}{.37};
\lcluster{219.5}{3.04}{.5};
\lcluster{211.5}{3.15}{.95};
\lcluster{214.5}{3.20}{1.12};
\lcluster{212}{3.26}{1.27};
\lcluster{219}{3.46}{1.73};
\lcluster{242}{3.02}{.37};

\lcluster{217}{3.54}{1.87};
\lcluster{220.5}{3.68}{2.14};
\lcluster{219}{3.76}{2.26};
\lcluster{221.5}{3.89}{2.47};
\lcluster{223.5}{4.01}{2.65};
\lcluster{225}{4.1}{2.8};

\lcluster{107.5}{3.11}{.83};
\lcluster{109}{3.09}{.75};
\lcluster{107.5}{3.07}{.67};
\lcluster{105.5}{3.05}{.56};
\lcluster{107.5}{3.03}{.45};
\lcluster{110}{3.02}{.32};

\lcluster{109.5}{3.3}{1.37};
\lcluster{107.5}{3.25}{1.24};
\lcluster{109}{3.21}{1.15};
\lcluster{107.5}{3.18}{1.06};
\lcluster{117.5}{4.01}{2.65};
\lcluster{120}{3.86}{2.43};
\lcluster{112.5}{3.52}{1.84};
\lcluster{151.5}{3.03}{.39};
\lcluster{110}{3.43}{1.66};
\lcluster{107.5}{3.35}{1.5};
\lcluster{42.5}{-5.44}{4.53};
\lcluster{45}{-5.1}{4.13};
\lcluster{43}{-4.87}{3.84};
\lcluster{45}{-4.67}{3.58};
\lcluster{43.5}{-4.53}{3.39};
\lcluster{45}{-4.4}{3.22};
\lcluster{121}{4.24}{3};
\lcluster{45}{-5.82}{4.99};
\lcluster{2.5}{7.52}{6.9};
\lcluster{-3.5}{9.98}{9.51};
\lcluster{-70}{3.02}{.32};
\lcluster{1.5}{5.74}{4.9};
\lcluster{-2}{6.39	}{5.64};
\lcluster{0}{5.51}{4.62};
\lcluster{0}{6}{5.2};
\lcluster{0}{6.84}{6.15};
\lcluster{0}{12.4}{12.03};
\lcluster{0}{3.76}{2.26};
\lcluster{0}{3.92}{2.52};
\lcluster{0}{4.67}{3.58};
\lcluster{0}{4.98}{3.98};
\lcluster{1.5}{3.83}{2.39};
\lcluster{2.5}{4.07}{2.75};
\lcluster{5}{4.24}{3};
\lcluster{2.5}{4.44}{3.27};
\lcluster{1.5}{4.82}{3.77};
\lcluster{17.5}{3.03}{.39};
\lcluster{15.5}{3.04}{.5};
\lcluster{17.5}{3.06}{.61};
\lcluster{16}{3.08}{.69};
\lcluster{17.5}{3.1}{.78};
\lcluster{18.25}{3.11}{.82};
\lcluster{17.5}{3.12}{.86};
\lcluster{-12}{3.02}{.32};
\lcluster{-23.5}{3.01}{.29};
\lcluster{-17.5}{3.06}{.61};
\lcluster{-16}{3.08}{.69};
\lcluster{-17.5}{3.1}{.78};
\lcluster{-18.25}{3.11}{.82};
\lcluster{-17.5}{3.12}{.86};

\dotcluster{0}{3.66}{2.1};
\dotcluster{17.5}{3.15}{.95};
\dotcluster{-17.5}{3.15}{.95};
%
\dotcluster{0}{5.23}{4.28};
\dotcluster{107.5}{3.15}{.95};
\dotcluster{225}{4.24}{3};
\end{scope}
\draw (-.4,.3) node{$D_2$};
\end{tikzpicture}
\caption{A locally finite cluster $\cS$ on $Z_{10}(Z_{\infty})$. Dark spots are the points of $Z_{10}$ which are the two-sided limit points of the  set $Z_{10}(Z_\infty)$. 
There are 5 equivalence classes of $\rho_{\cS}$; their sizes are 1,2,2,2,3.
Limits of arcs, shown as dotted curves, will be collapsed in the cactus space $D_{\cS}$. The gray region will also be identified to one point in $D_{\cS}$ (Figure \ref{fig04}).
}
\label{FigZ10}
\end{center}
\end{figure}
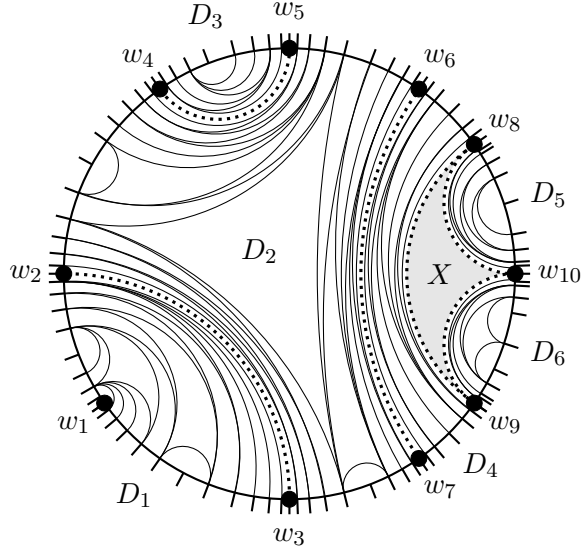
\begin{defn}\label{cactus space} We also construct  \emph{cactus space} $D_{\rho}$ associated to a noncrossing relation $\rho$ on a finite subset $Z\subset S^1$.
For each equivalence class $Z_k=\{w_1,\cdots,w_n\}$ in $Z$ we contract to a separate point $p_k$ the subset $W_k$ of the closed disk $D^2$ given as follows. When $n=1$, $W_k$ is one point $W_k=Z_k$. When $n=2$, $W_k$ is the closed geodesic $W_k=\widehat{w_1w_2}$. When $n\ge3$, we take $W_k$ to be the closed region in $D^2$ bounded by the closed geodesics $\widehat{w_iw_{i+1}}$, $i=1,\cdots,n$ where $w_0=w_n$. An example with $n=3$ is shown in gray in Figure \ref{FigZ10}.

For each $k$, the process of identifying the set $W_k$ to one point $p_k$ creates $n$ disks ($n-1$ additional disks) joined together at one shared point $p_k$. For the example in Figure \ref{FigZ10}, three of the equivalence classes $Z_k$ have size 2 and create one additional 2-disk each and on equivalence class has size 3 creating two additional 2-disks. The total is 5 additional 2-disks for a total of 6. See Figure \ref{fig04}.

This process gives a topological space $D_{\cS}$ which is homeomorphic to a finite union of 2-disks $\{D_i\}$ attached together at a finite set of points on their boundaries so that the union is simply connected. (Notice that any pair of 2-discs will be attached at, at most one point.) The image of $Z(Z_\infty)$ in each 2-disk $D_i$ is equal to $Z_i(Z_\infty)$, where $Z_i$ is  the image of $Z$ in $D_i$. The attaching points are in the image of $Z$. We call this the `cactus space' corresponding to the cluster $\cS$, or `cactus space' corresponding to the equivalence relation $\rho$. 
\end{defn}

Notice that in Figure \ref{FigZ10} limits of arcs, shown as dotted geodesics 
$\widehat{w_0w}$ for $w_0\!\sim_{\rho}\!w$ will be collapsed in the cactus space $D_{\cS}=D_{\rho}$ in Figure \ref{fig04}.
 The gray region $X$ is also identified to one point in Figure \ref{fig04}.

\begin{rem} Let $Z(Z_\infty)$ be the admissible cyclic poset with finite limit set $Z\subset S^1$. \begin{enumerate}
\item To each locally finite cluster $\cS$ we associate the cactus space $D_{\cS}$.
\item The locally finite cluster $\cS$ is a triangulation cluster if and only if the associated cactus space is equal to $D^2$.
\item To each noncrossing equivalence relation on $Z$ we associate cactus space $D_{\rho}$.
\item There are many locally finite clusters which define the same noncrossing equivalence relation $\rho$ and therefore the same cactus space $D_{\rho}$ which will be described precisely in Proposition \ref{ST}.
\end{enumerate}
\end{rem}
\begin{figure}[htbp]
\begin{center}
\begin{tikzpicture}
\clip (0.8,0) rectangle (13,5.6);
\pgfimage[width=5.5in]{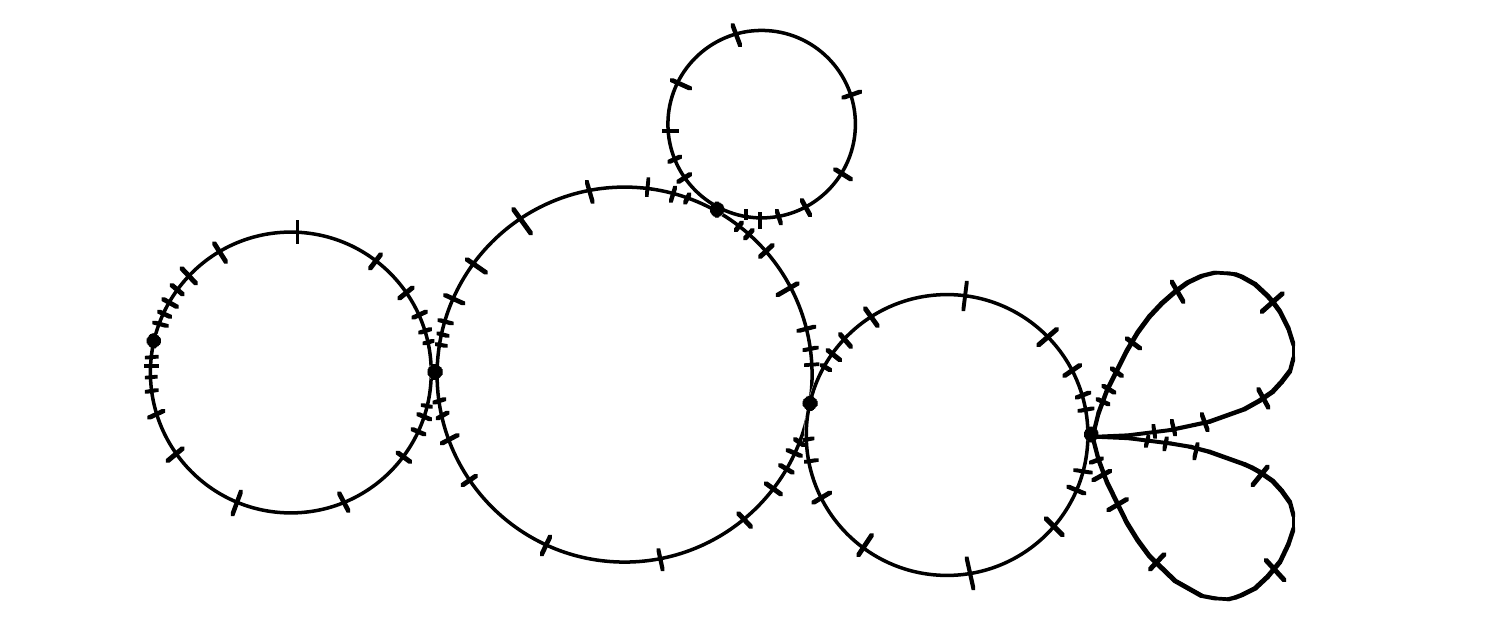};
\begin{scope}[xshift=-5in]
\draw (1.5,2.3) node{$D_1$};
\draw (-.2,2.6) node{$w_1$};
\draw (2.3,4.5) node{$w_2\sim w_3$};
\draw[->,thick] (2.3,4.3)..controls (2.3,2.5) and (2.3,2.4)..(2.6,2.35);
\draw (4.55,2.4) node{$D_2$};
\draw (5.85,4.65) node{$D_3$};
\draw (7.6,1.8) node{$D_4$};
\draw (8.6,1.8) node{$X$};
\draw (10,2.5) node{$D_5$};
\draw (10,1) node{$D_6$};
\end{scope}
\end{tikzpicture}
\caption{Cactus space $D_{\cS}$ for the locally finite cluster $\cS$ in the category $\cC_{can}(Z_{10}(Z_\infty))$
       from Figure \ref{FigZ10}. 
              The 5 equivalence classes of size 1,2,2,2,3 of Figure \ref{FigZ10} appear here as spots and the elements of $Z_{10}(Z_\infty)$ are the dashes.}\label{fig04}
\end{center}
\end{figure}
\begin{prop} \label{ST} Let $Z(Z_\infty)$  be an admissible cyclic poset, with finite limit set $Z\subset S^1$. Let $\rho$ be a noncrossing relation on $Z$. Then there is a 1-1 correspondence:
\[
	\left\{\cS |
	\begin{array}{c}\cS 
	\text{ locally finite cluster in }
	\\
	\text{ }\cC_{can}(Z(Z_\infty))
	\text{ with $\rho_{\cS} =\rho$}
	\end{array}
	\right\} 
	\leftrightarrow
	\left\{\cT |
	\begin{array}{c}\cT=\{\cT_1,\dots, \cT_k\}\\
	\cT_i \text{ triangulation cluster for disk } D_i \\
	\text{of the cactus space } D_\rho
	\end{array}
	\right\}.
\]
\end{prop}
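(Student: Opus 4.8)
The plan is to write down the evident bijection and check both directions using the product decomposition of Theorem~\ref{thm: cactus category= prod C(Vk)} together with the limit-point criterion of Theorem~\ref{triangulationtheorem}. First I would set up the bookkeeping: the $\rho$-noncrossing classes $V_k$ that meet $S^1$ in arcs (equivalently, contain points of $Z(Z_\infty)$) are exactly the disks $D_k$ of the cactus space $D_\rho$, and under the identification of $\cC_{can}(V_k)$ with the cluster category of the admissible set $Z_k(Z_\infty)$ living in $D_k$ (Lemma~\ref{cactus first lemma}(1)) a triangulation cluster of $D_k$ is literally a triangulation cluster in $\cC_{can}(V_k)$. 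Since Theorem~\ref{thm: cactus category= prod C(Vk)} gives $\cC_{can}^\rho(Z(Z_\infty))\cong\prod_k\cC_{can}(V_k)$ and a cluster in a finite product is a tuple of clusters in the factors, the real content is to match the locally finite clusters $\cS$ with $\rho_\cS=\rho$ to the tuples $\{\cT_k\}$ of triangulation clusters.

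The forward map sends $\cS$ to $\{\cS_k\}$, $\cS_k:=\cS\cap\cC_{can}(V_k)$, and I would verify three points. First, every object of $\cS$ is $\rho$-noncrossing, so that $\cS=\bigsqcup_k\cS_k$: if $\widehat{xy}$ crossed a chord $\widehat{ww'}$ joining $\rho$-equivalent points, then following a chain of generating relations for $w\sim_{\rho_\cS}w'$ and a planarity argument forces $\widehat{xy}$ to cross one of the chords carrying an accumulating $\cS$-sequence, and since the endpoints $x,y\in Z(Z_\infty)$ are disjoint from $Z$ the crossing is transverse, so $\widehat{xy}$ meets infinitely many members of that sequence, contradicting compatibility. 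Second, each $\cS_k$ is maximal noncrossing in $\cC_{can}(V_k)$: a compatible $E(x,y)$ with $x,y\in V_k$ is automatically noncrossing with every $\cS_j$, $j\ne k$, by the separation of Lemma~\ref{cactus first lemma}(2), so maximality of $\cS$ absorbs it into $\cS_k$. Third, each $\cS_k$ is a triangulation cluster: a sequence $E(x_i,y_i)\in\cS_k$ with $x_i\to p$ and $y_i\to p'$ for limit points $p\ne p'$ of $D_k$ lifts to endpoints tending to $w\ne w'$ in $Z$ with $w\sim_{\rho_\cS}w'$; but $p\ne p'$ means $w,w'$ lie in distinct $\rho$-classes, so $w\not\sim_\rho w'$, contradicting $\rho_\cS=\rho$, whence $p=p'$ and Theorem~\ref{triangulationtheorem} applies inside $D_k$.

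The backward map sends $\{\cT_k\}$ to $\cS:=\bigcup_k\cT_k$, and I would check that $\cS$ is a locally finite cluster with $\rho_\cS=\rho$. It is noncrossing within each $V_k$ and across the $V_k$ by Lemma~\ref{cactus first lemma}(2), and maximal because any new compatible object is $\rho$-noncrossing, hence lies in a single $V_k$ and is absorbed by the maximal $\cT_k$; it is locally finite since each $\cT_k$ is (Lemma~\ref{1-1}) and each element of $Z(Z_\infty)$ lies in only one $V_k$. For $\rho\subseteq\rho_\cS$, the two-sided approach to each collapse point $p_j$ of $D_k$ guaranteed by the triangulation cluster $\cT_k$ lifts to an $\cS$-sequence whose endpoints tend to the two $\rho$-equivalent points of $Z$ identified at $p_j$. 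For $\rho_\cS\subseteq\rho$, a sequence realizing $w\sim_{\rho_\cS}w'$ is eventually contained in one $\cT_k$ (its nested noncrossing geodesics converge into a single component), where Theorem~\ref{triangulationtheorem} forces both ends to the same limit point $p$ of $D_k$, so $w,w'$ lie in the $\rho$-class collapsed to $p$. The two maps are visibly inverse, which finishes the bijection.

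The hard part will be the geometric dictionary underlying the first and third points above and the equality $\rho_\cS=\rho$: namely that the collapse of Definition~\ref{cactus space} turns each one-sided limit in $Z(Z_\infty)\subset S^1$ into a genuine two-sided limit at the corresponding point of $D_k$, and conversely, and that no object of $\cS$ can straddle a dividing geodesic. Once it is pinned down that two-sidedness in $D_k$ corresponds precisely to endpoints tending to distinct $\rho$-identified boundary points of $Z$, the remaining steps are formal consequences of Theorems~\ref{triangulationtheorem} and~\ref{thm: cactus category= prod C(Vk)}.
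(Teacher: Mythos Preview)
Your proposal is correct and follows essentially the same route as the paper: split $\cS$ along the $\rho$-noncrossing classes via Theorem~\ref{thm: cactus category= prod C(Vk)}, use the limit-point criterion of Theorem~\ref{triangulationtheorem} (and Corollary~\ref{cor: each limit point of triangulation cluster}) to certify each piece as a triangulation cluster, and run the evident inverse. You even fill in the ``every object of $\cS$ is $\rho$-noncrossing'' step that the paper's forward direction asserts without argument (the paper only spells out the accumulating-sequence crossing argument in the backward direction, for maximality).
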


\begin{proof}
This follows from Theorem \ref{thm: cactus category= prod C(Vk)}. Any cluster $\cS$ with associated noncrossing equivalence relation $\rho$ has objects in the $\rho$-noncrossing components $V_k$ of $Z(Z_\infty)$. Therefore $\cS$ is a cluster in the full subcategory $\cC_{can}^\rho(Z(Z_\infty))$ of $\cC_{can}(Z(Z_\infty))$. By Theorem \ref{thm: cactus category= prod C(Vk)}, the objects in each $V_k$ form a cluster $\cS_k$ in $\cC_{can}(V_k)$ which is still locally finite. We claim that $\cS_k$ is a triangulation cluster. Indeed, consider a sequence of objects $E(x_i,y_i)$ in $\cS_k$ with $x_i\to w$ and $y_i\to w'$. We claim that $w\sim w'$. Considered as objects in $\cC_{can}(Z(Z_\infty))$, these same objects $E(x_i,y_i)$ are in $\cS$ and the limit points of $z,z'$ of $\{x_i\}$ and $\{y_i\}$ are $\rho$-equivalent by definition of $\rho$. Therefore, these points, $z,z'$ map to the same point $w=w'$ in the cactus space by construction of that space. This shows that $\cS_k$ is a triangulation cluster.

Conversely, suppose that $\{\cS_k\}$ is a set of triangulation clusters, one for each $\rho$-noncrossing equivalence class $V_k$. Take the union $\cS=\bigcup \cS_k$. We claim that $\cS$ is a locally finite cluster in $\cC_{can}(Z(Z_\infty))$ with corresponding equivalence relation $\rho$. 

To determine the noncrossing equivalence relation, suppose that $E(x_i,y_i)$ is an infinite sequence of objects in $\cS$ where $x_i\to z$ and $y_i\to z'$. Since there are only finitely many $\cS_k$, one of them contains infinitely many of these objects and, passing to this subsequence, we may assume that all $E(x_i,y_i)$ lie in the same $\cS_k$. Since $\cS_k$ is a triangulation cluster, $\{x_i\}$ and $\{y_i\}$ converge to the same point $w$ in the cluster space. So, $z,z'$ both map to $w$ which implies that $z\sim_\rho z'$. Conversely, suppose that $z\sim_\rho z'$ where $z,z'$ are consecutive points in an equivalence class $W_j\subseteq Z\subset S^1$. Then $z,z'$ are one-sided limit points of some $V_k$ which map to the same point $w$ in the cactus space. By Corollary \ref{cor: each limit point of triangulation cluster}, there must exist objects $E(x_i,y_i)$ in $\cS_k$ where $\{x_i\}$ and $\{y_i\}$ converge to $w$ from opposite sides. Then in the full circle $x_i,y_i$ converge to $z$ and $z'$. So $z\sim z'$ in the equivalence relation corresponding to $\cS$.

The rest is very easy. We already know that $\cS$ is locally finite and objects are pairwise compatible. Also, $\cS$ is a maximal compatible set. Otherwise, there is an object $E(x,y)$ compatible with all objects of $\cS$ where $x,y$ are not $\rho$-noncrossing. This means the geodesic $\widehat{xy}$ crosses a geodesic $\widehat{zz'}$ where we may take $z,z'$ to be consecutive points in an equivalence class $W_j\subseteq Z$. As in the last paragraph, there is a sequence of objects $E(x_i,y_i)$ in some $\cS_k$ so that $\widehat{x_iy_i}$ converge to $\widehat{zz'}$. This implies that $\widehat{x_iy_i}$ will cross $\widehat{xy}$ for sufficiently large $i$ which contradicts that assumption that $E(x,y)$ is compatible with $\cS$. So, $\cS$ is a locally finite cluster having all the desired properties. This proves the theorem.
\end{proof}

The conclusion is that, when $Z$ has a finite number of limit points, the locally finite clusters come from triangulation clusters of smaller cluster categories. It would be interesting if there were examples where locally finite clusters do not exist or if there were locally finite clusters which do not come from triangulation clusters of smaller categories.

\section{Cluster structures of the cyclic posets of $S^1$ with automorphism $\varphi_{\theta}$} \label{phi=theta}

In this section we consider cyclic posets of $S^1$ with automorphisms, i.e. $(X,c)_{\varphi}$ from Section \ref{Cyclic posets with automorphisms}. Here the automorphism $\varphi=\varphi_\theta$ is different from the identity and from the canonical automorphism: these two cases were done in Sections \ref{phi=id} and \ref{phi=canonical}.

\subsection{Automorphism $\varphi_{\theta}$}

Let $X$ be a subset of $S^1$ which is invariant under rotation in either direction by a positive angle $\theta$. Then counterclockwise rotation by $\theta$ gives an automorphism $\varphi_{\theta}$ of the cyclic poset $X$. This automorphism will be admissible (Definition \ref{admissible automorphism}) if and only if $\theta<\pi$ since that is the condition under which applying the automorphism twice will not make a complete circle. 
Let $\cC_{\varphi_{\theta}}(X)$ be the stable category of the twisted Frobenius category $\cF_{\varphi_{\theta}}(X)$ as in Definition \ref{twisted stable category}.

The question that we will consider in this section is: under what conditions does the triangulated category $\cC_{\varphi_{\theta}}(X)$  have a cluster structure? We need a collection of clusters $\cS$ closed under mutation where mutations are given by approximations. We will show below that a necessary and sufficient condition for 
$\cC_{\varphi_{\theta}}(X)$ to have a cluster structure is that $\theta=2\pi/N$ for some positive integer $N\ge4$. This statement has already been proven in the case $X=S^1$ in \cite{IT15b}, however here we analyze other subsets of $S^1$, finite and infinite.

Suppose that $\theta=2\pi/N$ with $N\ge4$. 
The cyclic subgroup $\left<\varphi_{\theta}\right>\subset Aut(X)$ generated by $\varphi_{\theta}$ acts freely on $X$.
The orbit 
$O(x)$ of each point $x\in X$ has exactly $N$ points which are equally spaced around the circle $S^1$. 

\begin{lem} \label{Jx} Let $Z_N$ be the standard set of $N$ points in the circle $S^1$ labeled by $\{1,2,\dots,n\}$ and let $\varphi$ be the canonical automorphism of $Z_N$ given by 
$\varphi(i)=i+1$ and $\varphi(n)=1$.
Let $X$ be a subset of $S^1$ which is invariant under rotation in either direction by the angle $\theta$ and let $\varphi_{\theta}\in Aut(X)$ be the  counterclockwise rotation by $\theta$. 
Let  
\[
	J_x: (Z_N,\varphi)\to (X,\varphi_{\theta}) \text{ be defined as } J_x(i)= \varphi_{\theta}^i(x).
\]
\begin{enumerate}
\item Then $J_x$ is a monomorphism of cyclic posets with automorphisms.
\item Then $J_x$ induces full, faithful embedding of Frobenius categories 
$\cJ_x\!:\!\cF_\varphi(Z_N)\!\to\!\cF_{\varphi_{\theta}}(X)$ which  takes projective-injective objects to projective-injective objects.
\item The induced functor on the stable categories $\underline\cJ_x:\cC_\varphi(Z_N)\to \cC_{\varphi_{\theta}}(X)$  is a triangulated full embedding of triangulated categories. 
\end{enumerate}

\end{lem}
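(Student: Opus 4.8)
The plan is to reduce everything to one structural fact about $J_x$: that it preserves the cocycle. Granting this, part (2) is a formal consequence about factorization categories, and part (3) follows from Happel's theorem \cite{HappelBook} once one checks that injective envelopes of objects in the image of $\cJ_x$ remain in the image. Throughout I normalise $S^1=\RR/\ZZ$ via $z\mapsto e^{2\pi iz}$, so that $\theta=2\pi/N$ becomes translation by $\tfrac1N$ and the defining automorphism $\sigma$ of Definition \ref{cover} becomes $+1$. By Example \ref{eg: Zn} the covering poset of $Z_N$ is $\tfrac1N\ZZ$, while the covering poset of $X$ is $\widetilde X=\pi^{-1}X\subset\RR$.

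For (1) I would fix a lift $\tilde x\in\RR$ of $x$ and define $\widetilde{J_x}\colon \tfrac1N\ZZ\to\RR$ by $\widetilde{J_x}(i/N)=\tilde x+i/N$; since $X$ is $\varphi_\theta$-invariant, this lands in $\widetilde X$. This map is a translate of the inclusion $\tfrac1N\ZZ\hookrightarrow\RR$, hence strictly order preserving, commutes with $\sigma$ (addition of $1$) and with $\tilde\varphi_\theta$ (addition of $\tfrac1N$), and is injective because the orbit $O(x)$ consists of $N$ distinct, equally spaced points. Thus $J_x$ is an injective, $\varphi$-equivariant morphism of cyclic posets, i.e.\ a monomorphism of cyclic posets with automorphism. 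In particular, the covering-poset formula for the cocycle in Proposition \ref{equivalence: cyclic poset=covering poset} gives $b(J_x(i),J_x(j))=\lceil (i-j)/N\rceil=b(i,j)$ for the natural sections, whence $c_X(J_x i,J_x j,J_x k)=c_{Z_N}(i,j,k)$ for all $i,j,k$.

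Because $J_x$ preserves the cocycle, the powers of $t$ governing composition in the associated $t^{\NN}$-categories agree, so $P_i\mapsto P_{J_x(i)}$, $f_{ij}\mapsto f_{J_x(i)J_x(j)}$ identifies $RZ_N$ with the full subcategory of $RX$ on the orbit objects. This lifts to the factorization categories via $E(i,j)\mapsto E(J_x i,J_x j)$, sending a matrix-factorization morphism to its image; faithfulness is immediate and fullness holds because the relevant $\Hom$ spaces are identified. For (2) it then remains to observe that the indecomposable projective-injectives of $\cF_\varphi(Z_N)$ are the objects $E(i,\varphi i)$, and by equivariance $\cJ_x E(i,\varphi i)=E(J_x i,\varphi_\theta J_x i)=E(y,\varphi_\theta y)$, which is exactly an indecomposable projective-injective of $\cF_{\varphi_\theta}(X)$. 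Hence $\cJ_x$ is a full, faithful, exact embedding preserving projective-injectives.

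For (3), exactness together with preservation of projective-injectives yields, by Happel's theorem, an induced triangulated functor $\underline{\cJ_x}$ commuting with $\Sigma$ and carrying the distinguished triangles of Corollary \ref{triangulated} to distinguished triangles. \textbf{The main obstacle is full faithfulness on the stable level}, i.e.\ that $\underline\Hom$ is preserved. Using fullness and faithfulness of $\cJ_x$, this amounts to showing that a morphism $\cJ_x A\to\cJ_x B$ factors through a projective-injective of $\cF_{\varphi_\theta}(X)$ if and only if $A\to B$ factors through a projective-injective of $\cF_\varphi(Z_N)$. The forward implication is clear; the subtlety in the converse is that the factorization might a priori run through some $E(y,\varphi_\theta y)$ with $y\notin O(x)$. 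I would rule this out using the standard fact that a map factors through an injective object exactly when it factors through the injective envelope of its source, together with the computation that the injective envelope of $E(y_i,y_j)$ in $\cF_{\varphi_\theta}(X)$ is $E(\varphi_\theta^{-1}y_i,y_i)\oplus E(\varphi_\theta^{-1}y_j,y_j)$ (built from the projective-injectives at the $\varphi_\theta$-neighbours of the endpoints, consistent with $\Sigma E(y_i,y_j)=E(\varphi_\theta^{-1}y_j,\varphi_\theta^{-1}y_i)$). Since $\varphi_\theta^{\pm1}y_i=J_x(i\pm1)$ and $\varphi_\theta^{\pm1}y_j=J_x(j\pm1)$ lie in $O(x)$, this injective envelope equals $\cJ_x$ of the injective envelope of $E(i,j)$ in $\cF_\varphi(Z_N)$. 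Consequently every such factorization can be taken inside the image of $\cJ_x$ and pulled back, yielding the isomorphism on stable $\Hom$ spaces and completing the proof that $\underline{\cJ_x}$ is a triangulated full embedding.
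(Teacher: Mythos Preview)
Your proof is correct. The paper states this lemma without proof, presumably regarding it as routine; your write-up supplies the details the authors omit, and the argument you give for part (3)---that a morphism factors through a projective-injective if and only if it factors through the injective envelope of its source, which remains in the image of $\cJ_x$ because the orbit $O(x)$ is $\varphi_\theta$-invariant---is precisely the technique the paper uses elsewhere for analogous embeddings (see the remark that $\cC_{id}(Z_n)\hookrightarrow\cC_{id}(S^1)$ is full and faithful, and the proof of Proposition \ref{spaced-out embedding}).

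One small quibble: your displayed form of the injective envelope, $E(\varphi_\theta^{-1}y_i,y_i)\oplus E(\varphi_\theta^{-1}y_j,y_j)$, is not quite what the paper's conventions give (the injective envelope in $\cF_\varphi$ of $E(x,y)$ is typically $E(x,\varphi x)\oplus E(y,\varphi y)$ rather than the $\varphi^{-1}$ version), but this is immaterial to your argument since either way the summands are projective-injectives supported on $O(x)$, which is all you need.
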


\begin{rem}
The triangulated category $\cC_\varphi(Z_N)$ is the standard cluster category of type $A_{N-3}$ which has a cluster structure whose clusters $\cT$ are maximal compatible sets of objects. Since $\varphi$ is standard, compatibility is equivalent to the corresponding geodesics being noncrossing. Therefore, the maximal compatible sets in $\cC_\varphi(Z_N)$ are in bijection with triangulations of the regular $N$-gon. 
\end{rem}
The claim is that these $\underline\cJ_x(\cT)$ (for all $x\in X$) give all of the clusters in $\cC_\theta(X)$.

\begin{prop}\label{clusters for theta}
Let $\theta=2\pi/N$ for $N\in\mathbb N$. 
Let $X$ be a subset of $S^1$ which is invariant under rotation in either direction by the angle $\theta$. 
Then  the triangulated category $\cC_{\varphi_{\theta}}(X)$ has a cluster structure, where clusters are given by the set of all $\underline\cJ_x(\cT)$ where $x\in X$ and $\cT$ is a cluster in $\cC_\varphi(Z_N)$.
\end{prop}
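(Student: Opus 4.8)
The plan is to transport the known cluster structure of $\cC_\varphi(Z_N)$ across the embedding $\underline\cJ_x$ and then verify Definition \ref{Cluster structures} for the collection $\{\underline\cJ_x(\cT)\}$. By the preceding remark, $\cC_\varphi(Z_N)$ is the cluster category of type $A_{N-3}$: its clusters $\cT$ are the triangulations of the regular $N$-gon, mutation is the flip of a diagonal, and the quiver $Q_\cT$ is the usual one, so all conditions of Definition \ref{Cluster structures} already hold there. By Lemma \ref{Jx}, $\underline\cJ_x\colon\cC_\varphi(Z_N)\to\cC_{\varphi_\theta}(X)$ is a full, faithful, triangulated embedding whose image is the full subcategory of objects $E(u,v)$ with $u,v$ in the single orbit $O(x)=\{\varphi_\theta^i(x)\}$. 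The key compatibility is that the $N$ sides of the $N$-gon $O(x)$ are precisely the objects $E(z,\varphi_\theta z)$, which are projective-injective in $\cF_{\varphi_\theta}(X)$ and hence zero in $\cC_{\varphi_\theta}(X)$; this matches the fact that the sides of the $N$-gon are zero in $\cC_\varphi(Z_N)$, so $\underline\cJ_x$ carries zero objects to zero objects.

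First I would transport the exchange data. Given a cluster $\cT_0$ and a variable $T_0$ of $\cC_\varphi(Z_N)$, I apply $\underline\cJ_x$ to the exchange triangles $T_0^\ast\to B_0\to T_0$ and $T_0\to B_0'\to T_0^\ast$. Since $\underline\cJ_x$ is triangulated these remain distinguished triangles in $\cC_{\varphi_\theta}(X)$, and since the exchange object $T_0^\ast$ and the middle terms $B_0,B_0'$ are again diagonals (or zero sides) of the same $N$-gon $O(x)$, they lie in the image of $\underline\cJ_x$. I then claim $\underline\cJ_x B_0$ is still a minimal right $\add(\underline\cJ_x(\cT_0\setminus T_0))$-approximation of $\underline\cJ_x T_0$ in the ambient category $\cC_{\varphi_\theta}(X)$, and dually for $B_0'$: every morphism from an object of $\add(\underline\cJ_x(\cT_0\setminus T_0))$ to $\underline\cJ_x T_0$ is a morphism between objects of the image, hence by fullness comes from $\cC_\varphi(Z_N)$, where it factors through $B_0$; minimality is preserved because faithfulness identifies $\End(\underline\cJ_x B_0)$ with $\End(B_0)$. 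This yields condition (2). Fullness and faithfulness also give an equivalence $\add\,\underline\cJ_x(\cT_0)\simeq\add\,\cT_0$, so $Q_{\underline\cJ_x(\cT_0)}\cong Q_{\cT_0}$; hence there are no loops or $2$-cycles (condition (3)) and the quiver mutation rule (condition (4)) is inherited from $A_{N-3}$.

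Next I would treat the exchange/uniqueness condition (1) together with condition (5). Setting $T^\ast:=\underline\cJ_x T_0^\ast$, I get $\underline\cJ_x(\cT_0)\setminus T\cup T^\ast=\underline\cJ_x(\cT_0^\ast)$, again a member of the collection, so the exchange exists. Because mutation replaces one diagonal of $O(x)$ by another, it never changes the orbit; thus the exchange graph splits into components indexed by the orbits $O\subseteq X$, each isomorphic to the finite, connected exchange graph of $A_{N-3}$, exactly the behavior announced in the introduction. For uniqueness when $N\ge5$: the $N-4\ge1$ objects of $\underline\cJ_x(\cT_0)\setminus T$ are diagonals with endpoints in $O(x)$, and any cluster of the collection containing them uses an orbit meeting $O(x)$; since distinct orbits are disjoint, that orbit is $O(x)$, and uniqueness then reduces to the standard polygon fact that $N-4$ pairwise noncrossing diagonals of the $N$-gon extend to exactly two triangulations. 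Condition (5) is immediate, since the image of $\underline\cJ_x$, and therefore each $\underline\cJ_x(\cT)$, is closed under replacing objects by isomorphic ones.

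The main technical point, the one place where passage to the larger category $\cC_{\varphi_\theta}(X)$ could a priori fail, is the approximation claim of the second paragraph: one must be sure the minimal $\add$-approximations computed in $\cC_{\varphi_\theta}(X)$ do not require objects outside $\im\underline\cJ_x$. This is precisely what full faithfulness buys, combined with the observation that the middle terms and exchange objects never leave the $N$-gon $O(x)$. Finally, I would check the degenerate case $N=4$ (type $A_1$) by hand: each orbit $O$ is a square with two diagonals, mutation interchanges them, and condition (1) is read within each connected component of the exchange graph, i.e.\ among clusters sharing the orbit; the presence of many singleton clusters across different orbits reflects the disconnectedness of the exchange graph noted in the introduction, not a failure of the axioms.
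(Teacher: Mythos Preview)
Your proof is correct and follows essentially the same strategy as the paper's: both arguments transport the cluster structure of $\cC_\varphi(Z_N)$ through the triangulated full embedding $\underline\cJ_x$, using full faithfulness to see that the exchange triangles in $\cC_\varphi(Z_N)$ map to exchange triangles (with the correct approximation properties) in $\cC_{\varphi_\theta}(X)$, and then take the union over all $x\in X$. The paper's proof is terse, asserting only that ``$\underline\cJ_x(B)$ is the left $\add(\cS\backslash S)$-approximation of $S$'' and that the union over $x$ gives a cluster structure; you spell out why full faithfulness forces the approximations to stay inside the image, and you verify conditions (3)--(5) explicitly via the induced equivalence $\add\,\underline\cJ_x(\cT_0)\simeq\add\,\cT_0$.

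Your discussion of uniqueness in condition~(1) goes beyond what the paper does: the paper does not address why the mutated object $T^\ast$ is unique among \emph{all} clusters $\underline\cJ_y(\cT')$ rather than just those with $y=x$, whereas you give the orbit argument for $N\ge5$ and flag the degenerate case $N=4$. Your resolution for $N=4$ (reading condition~(1) componentwise in the exchange graph) is not literally what Definition~\ref{Cluster structures} says, but the paper glosses over this point entirely, so you are at worst matching the paper's level of rigor and at best making an implicit assumption explicit.
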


\begin{proof}
 Since $\underline\cJ_x:\cC_\varphi(Z_N)\to \cC_{\varphi_{\theta}}(X)$  is a triangulated full embedding of triangulated categories and $\cC_\varphi(Z_N)$ has a cluster structure, the images under $\underline\cJ_x$ of the clusters in $\cC_\varphi(Z_N)$ give a cluster structure for $\cC_{\varphi_{\theta}}(X)$. This is a very general statement. Let $\cS=\underline\cJ_x(\cT)$ where $\cT$ is a cluster in $\cC_\varphi(Z_N)$. Let $S=\underline\cJ_x(\cT)(T)$ be an object in $\cS$. Then mutation of $\cT$ is $\cT\backslash T\cup T'$ where $T'$ is given by a distinguished triangle $T\to B\to T'\to T[1]$ in $\cC_\varphi(Z_N)$ where $B$ is a left $add(\cT\backslash T)$-approximation of $T$. This maps to a distinguished triangle
 \[
 	S=\underline\cJ_x(T)\to \underline\cJ_x(B)\to S'=\underline\cJ_x(T')\to S[1]
 \]
 in $\cC_{\varphi_{\theta}}(X)$ where $\underline\cJ_x(B)$ is the left $add(\cS\backslash S)$-approximation of $S$. So, the mutation $\cT\backslash T\cup T'$ of $\cT$ maps to the mutation $\cS\backslash S\cup S'$ of $\cS$. So, $\underline\cJ_x$ takes clusters to clusters and commutes with mutation.
Taking the union over all $x\in X$ gives a larger set of clusters which is closed under mutation and gives a cluster structure on $\cC_{\varphi_{\theta}}(X)$
\end{proof}

The converse of Proposition \ref{clusters for theta} is also true:

\begin{thm}\label{thm: all cluster for phi-theta}
The triangulated category $\cC_{\varphi_{\theta}}(X)$ has a cluster structure if and only if $\theta=2\pi/N$ for some positive integer $N\ge4$. When this is the case, all clusters in 
$\cC_{\varphi_{\theta}}(X)$ are given by $\underline\cJ_x(\cT)$ where $x\in X$ and $\cT$ is a cluster in $\cC_\varphi(Z_N)$.
\end{thm}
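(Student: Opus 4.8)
The plan is to build on Proposition \ref{clusters for theta}, which already supplies the ``if'' direction together with the fact that each $\underline\cJ_x$ commutes with mutation; what remains is the ``only if'' direction and the completeness claim that every cluster has the form $\underline\cJ_x(\cT)$. I would derive both from a single structural lemma describing what a cluster in $\cC_{\varphi_\theta}(X)$ must look like geometrically. The point to stress at the outset is that, as noted in the introduction, clusters here are \emph{not} maximal compatible sets; they are the seeds of a mutation class satisfying Definition \ref{Cluster structures}, so the argument must exploit the exchange/approximation axioms (1)--(3) rather than mere pairwise compatibility.

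First I would set up the geometric dictionary. Writing $\theta=2\pi\alpha$ with $0<\alpha<1/2$ (the admissibility range $\theta<\pi$), a short computation with the factorization condition defining $\cF_{\varphi_\theta}(X)$ shows that $E(x,y)$ is an object exactly when both arcs between $x$ and $y$ have length at least $\theta$, and that the chords $E(x,\varphi_\theta(x))$ of arc exactly $\theta$ are precisely the indecomposable projective--injectives, hence the objects vanishing in $\cC_{\varphi_\theta}(X)$. Thus the nonzero indecomposables are chords whose two arcs both exceed $\theta$, and the ``frozen sides'' are the chords $\widehat{x\,\varphi_\theta(x)}$. The key structural lemma I would prove is that, for any cluster $\cS$, the exchange triangles of condition (2) realize each nonfrozen chord as the diagonal of a quadrilateral whose other sides are again chords (either frozen or members of $\cS$); consequently $\cS$ is the set of diagonals of a triangulation of a disk whose boundary edges are frozen sides $E(v_i,\varphi_\theta(v_i))$. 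This is the analogue of the triangulation picture used in Section \ref{phi=id} and in Lemma \ref{Jx}, and it is where the real work lies.

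Granting the lemma, both halves follow quickly. The boundary of the triangulated disk is a cyclic chain of frozen sides, so consecutive boundary vertices satisfy $v_{i+1}=\varphi_\theta(v_i)$, whence $v_i=\varphi_\theta^{\,i}(v_0)$. For this chain to close into a \emph{simple} boundary polygon traversing $S^1$ once, one needs $\varphi_\theta$ to have finite order with $\varphi_\theta^{\,N}=\mathrm{id}$ and no intermediate winding, that is $\theta=2\pi/N$: an irrational $\alpha$ leaves the orbit infinite, so the chain never closes, while $\alpha=p/q$ with $p\ge 2$ forces the boundary to wind $p$ times and self-cross, so no triangulated disk and hence no cluster structure can exist. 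This is the ``only if'' direction. For completeness, once $\theta=2\pi/N$ the boundary vertices are exactly the single orbit $O(v_0)=\{\varphi_\theta^{\,i}(v_0)\}\cong Z_N$, and the diagonals in $\cS$ form a triangulation of the regular $N$-gon, i.e.\ a cluster $\cT$ of $\cC_\varphi(Z_N)\simeq A_{N-3}$; transporting through $\cJ_{v_0}$ identifies $\cS=\underline\cJ_{v_0}(\cT)$.

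The main obstacle is the structural lemma itself: proving that the abstract exchange and approximation data of a cluster assemble into an honest frozen-side-bounded triangulation. Because clusters are not maximal compatible sets, this cannot be read off from noncrossing geometry alone; I would have to compute the relevant $\Hom$ and $\Ext^1$ groups in $\cC_{\varphi_\theta}(X)$ (using $\Sigma E(x,y)=E(\varphi^{-1}y,\varphi^{-1}x)$, so that $\Sigma$ is clockwise rotation by $\theta$) to show that the minimal left $\mathrm{add}(\cS\setminus S)$-approximation of each $S=E(a,b)$ is built from the two chords flanking $\widehat{ab}$ and that the unique exchange partner is the opposite diagonal, exactly as in type $A$. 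A secondary point to verify is the low-rank boundary behaviour (for instance $N=3,4$, where $\cC_\varphi(Z_N)$ is trivial or of type $A_1$), confirming that $N\ge 4$ is the precise threshold at which a genuine cluster structure appears.
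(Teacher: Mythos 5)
Your ``if'' direction and the endgame are fine, but the proposal has a genuine gap at exactly the point you flag yourself: the structural lemma (every cluster is the set of diagonals of a triangulation of a disk bounded by frozen sides $E(v,\varphi_\theta(v))$) is never proved, and it carries the entire weight of both the ``only if'' direction and the completeness claim. Note what this lemma actually requires. Since clusters here are \emph{not} maximal compatible sets (Figure \ref{Figure30}) and the exchange graph has infinitely many components, you must show that no collection of subsets whatsoever can satisfy the abstract axioms of Definition \ref{Cluster structures} when $\theta\neq 2\pi/N$ --- a nonexistence statement quantified over all hypothetical cluster structures, not a description of clusters already assumed to behave geometrically. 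Your sketch has a circularity risk: you read off the quadrilateral shape of the exchange triangles from $\cS$ being a triangulation, while using those triangles to prove $\cS$ is a triangulation. Moreover, the lemma as stated silently assumes the frozen boundary chain is finite and closes up; when $\theta/2\pi$ is irrational the dangerous configurations are precisely \emph{infinite} clusters (fans or zigzags of chords whose would-be boundary never closes), and in Section \ref{phi=canonical} of this very paper infinite clusters do occur for a different automorphism, so ruling them out here from axioms (1)--(3) is real work, not a remark. Likewise, for $\theta=2\pi p/q$ with $p\ge2$ the observation that the star-polygon boundary self-crosses does not by itself produce a contradiction with the exchange and approximation axioms; one has to derive it.

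The paper avoids all of this with a short reduction you did not consider: the inclusion gives a triangulated full embedding $\cC_{\varphi_\theta}(X)\into\cC_{\varphi_\theta}(S^1)$, and by the same general transport argument used in the proof of Proposition \ref{clusters for theta} (a full triangulated embedding carries clusters, approximations and exchange triangles to the same data in the target), any cluster structure on $\cC_{\varphi_\theta}(X)$ pushes forward to a cluster structure on $\cC_{\varphi_\theta}(S^1)$. The classification for $S^1$ is already established in \cite{IT15b}: such a structure exists only for $\theta=2\pi/N$ with $N\ge4$, and its clusters all come from clusters of $\cC_\varphi(Z_N)$ --- which simultaneously yields the completeness claim $\cS=\underline\cJ_x(\cT)$ for $\cC_{\varphi_\theta}(X)$, since the endpoints of the chords in $\cS$ lie in $X$. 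In other words, the hard analysis you propose to carry out directly (and in greater generality, for arbitrary $\varphi_\theta$-invariant $X$) is precisely the content of \cite{IT15b} for $X=S^1$, and the theorem follows by transport rather than by redoing it. To repair your write-up, either substitute this embedding-plus-citation for your structural lemma, or accept that a self-contained proof means reproving the \cite{IT15b} classification, including the infinite-cluster exclusion. Your geometric dictionary (nonzero indecomposables are chords with both arcs strictly greater than $\theta$, frozen objects are the arcs of length exactly $\theta$) and the final identification of the boundary orbit with $Z_N$ are correct and consistent with the paper's picture.
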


\begin{proof}
If $\theta=2\pi/N$ then by Proposition \ref{clusters for theta} the  category $\cC_{\varphi_{\theta}}(X)$ has a cluster structure.

Conversely, suppose that $\cC_{\varphi_{\theta}}(X)$ has a cluster structure. Take the triangulated full embedding $\cC_{\varphi_{\theta}}(X)\into \cC_{\varphi_{\theta}}(S^1)$. The cluster structure on $\cC_{\varphi_{\theta}}(X)$ maps to a cluster structure in $\cC_{\varphi_{\theta}}(S^1)$  by the general argument explained in the proof of Proposition \ref{clusters for theta}. By \cite{IT15b} this is possible only if $\theta=2\pi/N$ for some positive integer $N\ge4$ and the clusters in $\cC_{\varphi_{\theta}}(S^1)$ come from the clusters in $\cC_\varphi(Z_N)$. This proves all the statements in the Theorem.
\end{proof}

\begin{rem} Let $\theta=2\pi/N$. Let $X$ be a subset of $S^1$ which is invariant under $\varphi_{\theta}$.
\begin{enumerate}
\item  If $X$ has $kN$ elements then $\cC_{\varphi_{\theta}}(X)$ has $\frac k{N-1}\binom{2N-4}{N-2}$ clusters.
 Since $\cC_\varphi(Z_N)$ has $\frac1{N-1}\binom{2N-4}{N-2}$ clusters. 
 \item  In the example, in Figure \ref{Figure30}, $N=8$ and $k=3$ so $\cC_{\varphi_{\pi/4}}(Z_{24})$ has $\frac 37\binom{12}6= 396$ clusters.
\item Clusters are not maximal $\Ext^1$-compatible sets as in the example in Figure \ref{Figure30}.
\end{enumerate}
\end{rem}

\begin{rem}
 If $X$ is infinite, clusters are in general not reachable one from another. In fact, mutation classes are finite and there are an infinite number of clusters.
\end{rem}

\begin{figure}[htbp]
\begin{center}
\begin{tikzpicture}[scale=.6]
\begin{scope}[xshift=-9cm]
\draw[thick] (0,0) circle [radius=3cm];
\foreach \x in {0,15,30,45,60,75,90}\draw[thick,rotate=\x] (2.9,0)--(3.1,0) (-2.9,0)--(-3.1,0) ;
\foreach \x in {15,30,45,60,75}\draw[thick,rotate=-\x] (2.9,0)--(3.1,0) (-2.9,0)--(-3.1,0) ;
\foreach \x in {0,45,90,135}
\draw[color=blue,very thick,rotate=\x] (2.8,0)--(3.2,0) (-2.8,0)--(-3.2,0) ;
\clip (0,0) circle [radius=3cm];
\cluster{45}{4.24}{3};
\cluster{-45}{4.24}{3};
\cluster{135}{4.24}{3};
\cluster{-135}{4.24}{3};
\draw[very thick] (0,-3)--(0,3);
\end{scope}
\begin{scope}
\draw[thick] (0,0) circle [radius=3cm];
\foreach \x in {0,15,30,45,60,75,90}\draw[thick,rotate=\x] (2.9,0)--(3.1,0) (-2.9,0)--(-3.1,0) ;
\foreach \x in {15,30,45,60,75}\draw[thick,rotate=-\x] (2.9,0)--(3.1,0) (-2.9,0)--(-3.1,0) ;
\clip (0,0) circle [radius=3cm];
\foreach \x in {0,30,60,90,120,150,180,-30,-60,-90,-120,-150}
\bcluster{\x}{3.46}{1.73};
\cluster{45}{4.24}{3};
\cluster{-45}{4.24}{3};
\cluster{135}{4.24}{3};
\cluster{-135}{4.24}{3};
\draw[very thick] (0,-3)--(0,3);
\end{scope}
\end{tikzpicture}
\caption{In the category $\cC_\theta(Z_{24})$ with $\theta=\pi/4$, $J_0(\cC_\varphi(Z_8))$ consists of the 20 objects $E(x,y)$ where $x,y$ are multiples of $\pi/4$ with $|y-x|\ge\pi/2$. This full subcategory contains 132 clusters, one of which is shown on the left. However, these clusters are not maximal $\Ext^1$-compatible sets. For example, 12 objects from $\cC_\theta(Z_{24})$ can be added to this set to form a maximal compatible set with 17 objects as shown on the right.}
\label{Figure30}
\end{center}
\end{figure}
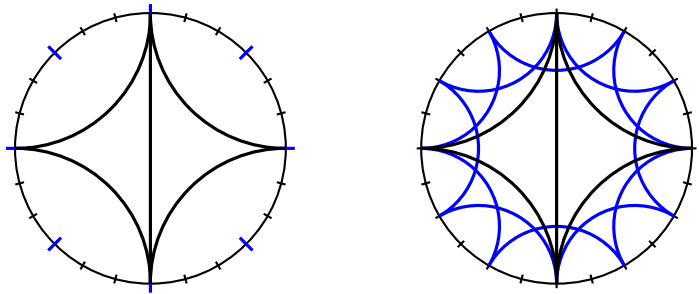

 \newpage

\end{document}